\newtheorem{thm}{Theorem}[section]
\newtheorem{lem}[thm]{Lemma}
\newtheorem{cor}[thm]{Corollary}
\newtheorem{prop}[thm]{Proposition}
\theoremstyle{definition}
\theoremstyle{remark}
\newtheorem{rem}[thm]{Remark}
\numberwithin{equation}{section}
\newcommand{\la}{{\langle}}
\newcommand{\ra}{{\rangle}}
\makeatletter \@addtoreset{equation}{section}
\begin{document}

\begin{center}

{\Large \bf Twisted $\Gamma$-Lie
algebras and their vertex operator representations}

\end{center}

\begin{center}
{Fulin Chen$^{a}$, Shaobin Tan$^{b}$\footnote{Partially supported by
NSF of China (No.10931006) and a grant from the PhD Programs
Foundation of Ministry of Education of China (No.20100121110014).}
and Qing Wang$^{b}$\footnote{Partially supported by NSF of China (No.11371024),
Natural Science Foundation of Fujian Province (No.2013J01018) and
Fundamental Research Funds for the Central
University (No.2013121001).
}\\
$\mbox{}^{a}$Academy of Mathematics and System Science\\ Chinese
Academy of Sciences, Beijing 100190, China\\
$\mbox{}^{b}$School of Mathematical Sciences\\ Xiamen University,
Xiamen 361005, China}
\end{center}

\vspace{6mm}





\textbf{Abstract.}  Let $\Gamma$ be a generic subgroup of the
multiplicative group $\mathbb{C}^*$ of nonzero complex numbers. We
define a class of Lie algebras associated to $\Gamma$, called
twisted $\Gamma$-Lie algebras, which is a natural generalization of
the twisted affine Lie algebras. Starting from an arbitrary even sublattice $Q$ of $\mathbb Z^N$ and
an arbitrary finite order isometry of $\mathbb Z^N$ preserving $Q$, we construct a family of twisted
$\Gamma$-vertex operators acting on generalized Fock spaces
which afford irreducible representations for certain twisted
$\Gamma$-Lie algebras. As application, this recovers a number of
known vertex operator realizations for infinite dimensional Lie
algebras, such as
 twisted affine Lie algebras, extended affine Lie algebras of type
$A$, trigonometric Lie algebras of series $A$ and $B$, unitary Lie
algebras, and $BC$-graded Lie algebras.

\section{Introduction}\label{sec 1}
Affine Kac-Moody Lie algebras, which are a family of
infinite-dimensional Lie algebras, have played an important role in
mathematics and mathematical physics. One striking discovery in the
representation theory of affine Kac-Moody algebras was the vertex
operator construction of the basic representations. The first such
construction, called principal, was discovered in \cite{LW} and
generalized in \cite{KKLW}. Another construction, called
homogeneous, was given in \cite{FK} and \cite{S} independently.
 Besides affine Kac-Moody algebras, there are some
other interesting infinite dimensional Lie algebras such as extended
affine Lie algebras, trigonometric Lie algebras, unitary Lie algebras and root graded Lie
algebras. And the vertex operator
representations also play an important role in the study of
representation theory of these algebras. To explain our motivation,
we first give a brief introduction to these algebras and mention
some of their vertex operator constructions which are closely
related to our work.

\begin{itemize}
\item \textbf{Twisted affine Lie algebras} The vertex operator representations
for twisted affine Lie algebras were obtained in \cite{L} and
\cite{KP} in a very general setting. In \cite{L}, the author gave a
generalization of all the known vertex operator constructions for
affine Kac-Moody algebras by introducing the twisted vertex
operators associated with an arbitrary isometry of an arbitrary even
lattice.

\item \textbf{Extended affine Lie algebras of type $A$.}
Extended affine Lie algebras were first introduced in \cite{H-KT}
and systematically studied in \cite{AABGP}. The
 vertex operator representations for extended affine Lie
algebras of type $A$ coordinated by quantum tori have been given in
\cite{BS} and \cite{G2} for the principal realizations, and in
\cite{G1} in the homogeneous realization. Later in \cite{BGT}, a
unified treatment was given.

\item \textbf{Trigonometric Lie algebras.} As a natural generalization of the Sine Lie algebra, four
series of $\mathbb Z$-graded trigonometric subalgebras
$\widehat{A}_\mathbf h-\widehat{D}_\mathbf h$ of
$\widehat{A}_\infty-\widehat{D}_\infty$ were introduced in
\cite{G-KL1,G-KL2}. Moreover, the vertex operator constructions for
the Lie algebras $\widehat{A}_\mathbf h$ and $\widehat{B}_\mathbf h$
were also presented in \cite{G-KL1,G-KL2}.

\item \textbf{Unitary Lie algebras.}  Unitary Lie algebras
were first introduced in \cite{AF}. The compact forms of certain
intersection matrix algebras developed by Slodowy can be identified
with some Steinberg unitary Lie algebras. The vertex
operator constructions for a class of unitary Lie algebras
coordinated by skew Laurent polynomial rings were presented in
\cite{CGJT}.

\item \textbf{$BC$-graded Lie algebras.}
Root graded Lie algebras were first introduced in \cite{BM}
 and the $BC$-graded Lie algebras were studied and classified in \cite{ABG}.
The homogeneous vertex operator constructions for a class of
$BC$-graded Lie algebras coordinated by
 skew Laurent polynomial rings were given in \cite{CT}.

\end{itemize}

Though these Lie algebras were introduced for different purposes and
were defined by different approaches, we observe that their vertex
operator constructions arise from certain known vertex operators for
affine Lie algebras of type $A$ or $D$ and depend on certain nonzero
complex numbers. It is natural for us to give a general construction
so that it contains these vertex operator constructions as special
cases. This is the main motivation of our work.

We first give the definition of twisted $\Gamma$-Lie algebras
which simultaneously generalize those Lie algebras mentioned above.
More precisely, let $\Gamma$ be a subgroup of the
multiplicative group $\mathbb C^*$ satisfying the following
assumption:

\vspace{2mm}

(A1). $\Gamma$ is generic, i.e., $\Gamma$ is isomorphic to a free
abelian group.
\vspace{2mm}

We start from any finite order automorphism of an involutive associative algebra to construct the twisted $\Gamma$-Lie algebra.
In particular, we use a lattice to
construct an involutive associative algebra, then a twisted
$\Gamma$-Lie algebra is defined based on this involutive associative
algebra. To explain this process more precisely, we let $N$ be a
positive integer, and $P=\mathbb Z^N=\mathbb Z\epsilon_1\oplus
\cdots \oplus \mathbb Z\epsilon_N$ be a lattice of rank $N$ equipped
with a $\mathbb{Z}$-bilinear form given by
$\la\epsilon_i,\epsilon_j\ra=\delta_{ij}, 1\le i,j\le N$. We take a triple $(Q,\nu,m)$
 satisfying the following conditions:

\vspace{2mm}
(A2). $Q$ is a sublattice of $P$ such that $\la\alpha,\alpha\ra\in
2\mathbb{Z}$ for $\alpha\in Q$.

(A3). $\nu$ is an isometry of $P$ preserving $Q$. That is, $\nu$ is
an automorphism of $P$ such that
$\la\nu\alpha,\nu\beta\ra=\la\alpha,\beta\ra$ for $\alpha,\beta\in
P$ and
 $\nu(Q)=Q$.

(A4). $m$ is a positive integer such that $\nu^m=\mathrm{Id}$, the identity map on
$P$.

(A5). If $m$ is even, $\la\nu^{m/2}\alpha,\alpha\ra\in 2\mathbb Z$
for $\alpha\in Q$.
\vspace{2mm}

Note that if the triple $(Q,\nu,m)$ satisfies the assumptions
(A2)-(A4), then the assumption (A5) can always be arranged by
doubling $m$ if necessary. Now we give some examples of the triples
$(Q,\nu,m)$ which satisfy assumptions (A2)-(A5). Let
$Q=Q(A_{N-1}),N\ge 2$ or $Q(D_N),N\ge 5$ be the root lattices of type $A_{N-1}$ and $D_N$
respectively,
 and let $\nu$ be an isometry of $Q$. It is known that $\nu$ can be lifted to be an isometry of $P$ and has finite order.
Thus there exists a positive integer $m$ such that $(Q,\nu,m)$
satisfies assumptions (A2)-(A5).

Starting from a quadruple $(Q,\nu,m,\Gamma)$ which satisfies
(A1)-(A5), we construct a twisted $\Gamma$-Lie algebra $\widehat{\mathcal G}(Q,\nu,m,\Gamma)$
 and  define a family of twisted $\Gamma$-vertex
operators acting on a generalized Fock space. By computing the
commutator relations of these twisted $\Gamma$-vertex operators, we
obtain a class of irreducible representations for the twisted
$\Gamma$-Lie algebra $\widehat{\mathcal G}(Q,\nu,m,\Gamma)$. One
will see that it is very subtle and technical to determine the
commutator relations for the twisted $\Gamma$-vertex operators. To
do this, we develop a highly non-trivial generalization of the
combinatorial identity presented in Proposition 4.1 of \cite{L}. As
applications, we show that with different choices of quadruples
$(Q,\nu,m,\Gamma)$, we recover the vertex operator constructions of the
twisted affine Lie algebras, extended affine Lie algebras of type
$A$ (both homogeneous and principal constructions), trigonometric
Lie algebras of series $A$ and $B$, unitary Lie algebras
 and $BC$-graded
Lie algebras given in \cite{L,G-KL1,G-KL2,BS,G1,G2,BGT,CGJT,CT}
respectively. Moreover, we also present  vertex operator representations
for a new twisted $\Gamma$-Lie algebras.


The paper is organized as follows. In Section 2 we give the
definition of the twisted $\Gamma$-Lie algebra $\widehat{\mathcal
G}(Q,\nu,m,\Gamma)$ for any
quadruple $(Q,\nu,m,\Gamma)$ satisfying the assumptions (A1)-(A5).
 In Section 3 we define
 the generalized Fock space and give the twisted $\Gamma$-vertex operators.
In Section 4 we establish a crucial identity (see \eqref{eq:3.15}) by using certain combinatorial identities, which
allows us to compute the commutator relations of the twisted
$\Gamma$-vertex operators. In Section 5 we prove our main
result in Theorem \ref{thm:4.1}, which shows that those twisted
$\Gamma$-vertex operators acting on generalized Fock spaces give
representations for the Lie algebra $\widehat{\mathcal
G}(Q,\nu,m,\Gamma)$. Finally, in Section 6 we present the
applications of our main result.

\textbf{Conventions and notations:}

1. We denote the sets of integers, positive integers,
 complex numbers, real numbers and the quotient group $\mathbb Z/m\mathbb Z$ by
$\mathbb Z, \mathbb Z_+, \mathbb C, \mathbb R$ and $\mathbb Z_m$,
respectively.

2. For short, if $\alpha,\beta\in P$, we often write
$$\sum_{p\in \mathbb Z_m}\nu^p\alpha:=\sum \nu^p\alpha,\text{ and }
\sum_{p\in \mathbb Z_m}\la \alpha,\nu^p\beta\ra:=\sum
\la\alpha,\nu^p\beta\ra.$$

3. For $1\le i\ne j\le N$ and $c\in \mathbb
C$, we set $(1-c)^{\pm\delta_{ij}}=1$ even if $c=1$.

4.  Since the fraction powers of nonzero complex numbers will arise
in the construction of our vertex operators, we need a convention
for this. For an index set $I$, we fix a set of free generators
$\{q_i,i\in I\}$ of $\Gamma$, and fix a choice of $q_i^{\frac 1 2}$ for each $i\in I$.  For any
$c=q_{i_1}^{n_1}\cdots q_{i_t}^{n_t}\in \Gamma$, where
$i_1,\cdots,i_t\in I$, $n_1, \cdots,n_t\in \mathbb{Z}$, we set
$$c^{\frac 1 2}:=(q_{i_1}^{\frac 1 2})^{n_1}\cdots (q_{i_t}^{\frac 1 2})^{n_t}.$$
Then we have $c_1^{\frac n 2}c_2^{\frac n 2}=(c_1c_2)^{\frac n 2}$
for all $c_1,c_2\in \Gamma$ and $n\in \mathbb Z$.


5. Set $Q(D_1)=2\mathbb Z\epsilon_1$, and for $N\geq 2,$ let
\begin{align*}Q(A_{N-1})&=\mathbb Z(\epsilon_1-\epsilon_2)\oplus \cdots \oplus \mathbb Z(\epsilon_{N-1}-\epsilon_N),\\
Q(D_{N})&=\mathbb Z(\epsilon_1-\epsilon_2)\oplus \cdots \oplus
\mathbb Z(\epsilon_{N-1}-\epsilon_N) \oplus \mathbb
Z(\epsilon_{N-1}+\epsilon_N)\end{align*} be the root lattices of
type $A_{N-1}$ and $D_N$, respectively.

\section{Twisted $\Gamma$-Lie algebras}
In this section, we give the definition of the twisted
$\Gamma$-Lie algebra and present some examples.
In particular, a twisted $\Gamma$-Lie
algebra $\widehat{\mathcal G}(Q,\nu,m,\Gamma)$ associated to any
quadruple $(Q,\nu,m,\Gamma)$ satisfying (A1)-(A5) is constructed.

\subsection{Twisted $\Gamma$-Lie algebras}
Let $\mathbb C_\Gamma$ be an associative algebra with base elements of the form $t^nT_c, n\in \mathbb Z,c\in
\Gamma$, and multiplication given by
$$(t^nT_{c_1})(t^rT_{c_2})=c_1^rt^{n+r}T_{c_1c_2},\ n,r\in \mathbb Z, c_1,c_2\in \Gamma.$$

Let $\mathcal A$ be another associative algebra equipped with an invariant symmetric bilinear form
 $\la , \ra_\mathcal A$. Suppose that $\theta$ is a finite order automorphism of $\mathcal A$ and
 preserves the bilinear form $\la , \ra_\mathcal A$.
 Let $m$ be a positive integer such that $\theta^m=\mathrm{Id}$ and fix a primitive $m$-th
 root  of unity $\omega$. Viewing $\mathcal A\otimes \mathbb C_\Gamma$ as a Lie algebra, denoted by $\mathcal A(\Gamma)$, with Lie product
$[a\otimes b,a'\otimes b']=aa'\otimes bb'-a'a\otimes b'b$ for
$a,a'\in \mathcal A$ and $b,b'\in \mathbb C_\Gamma$. Extend the
automorphism $\theta$ of $\mathcal A$ to be a Lie automorphism of
$\mathcal A(\Gamma)$, still denoted by $\theta$, by letting
\begin{align*} \theta(a\otimes t^nT_c):=\omega^{-n}\theta(a)\otimes t^nT_c,\ a\in \mathcal A,n\in \mathbb Z, c\in \Gamma.
\end{align*}
 We denote by $\mathcal A(\theta,m,\Gamma)$ the subalgebra of
$\mathcal A(\Gamma)$ fixed by the automorphism
$\theta$.
For any $a\in \mathcal A$ and $n\in \mathbb Z$, set
\begin{align*}
a_{(n)}^\theta=a_{(n)}:=m^{-1}\sum_{p\in \mathbb Z_m}\omega^{-np}\theta^p(a),\
\mathcal A_{(n)}:=\{a_{(n)}|a\in \mathcal A\}.
\end{align*}
Then we have $\theta(a_{(n)})=\omega^n a_{(n)}$ and $\mathcal A=\oplus_{p\in \mathbb Z_m}\mathcal A_{(p)}$.
Obviously,
 the Lie algebra $\mathcal A(\theta,m,\Gamma)$ is spanned by the following elements
 \begin{align*} a(c,n):=a_{(n)}\otimes t^nT_c,\ a\in \mathcal A, c\in \Gamma, n\in\mathbb Z.
 \end{align*}

  We define a bilinear form $\la , \ra$ on the Lie algebra $\mathcal A(\theta,m,\Gamma)$ as follows
 $$\la a(c_1,n),b(c_2,r)\ra:= \frac{n}{2m}\delta_{n+r,0}\la a,b\ra_\mathcal A \delta_{c_1c_2,1}c_1^r,\ a,b\in \mathcal A,
 c_1,c_2\in \Gamma, n,r\in \mathbb Z.
 $$
It is easy to see that the  bilinear form $\la , \ra$ is a
2-cocycle on the Lie algebra $\mathcal A(\theta,m,\Gamma)$. We
define a Lie algebra $\widehat{\mathcal
A}(\theta,m,\Gamma):=\mathcal A(\theta,m,\Gamma)\oplus
\mathbb{C}\mathbf{c}$
 to be the 1-dimensional
central extension  of $\mathcal A(\theta,m,\Gamma)$ associated to this 2-cocycle.

Let $\tau$ be an anti-involution of $\mathcal A$ which preserves
$\la ,\ra_\mathcal A$. We simply call the pair $(\mathcal A,\tau)$ an involutive associative algebra.
Suppose that $\theta$ is also an automorphism
of the involutive associative algebra $(\mathcal A,\tau)$, i.e., it commutes with $\tau$. Define an
anti-involution  $\ \bar{ }\ $ on $\mathbb C_\Gamma$  by
$$\overline{t^nT_c}:=c^{-n}t^nT_{c^{-1}},\ n\in \mathbb Z, c\in \Gamma.$$
Consider the linear map $\hat{\tau}$ on the Lie algebra $\widehat{\mathcal A}(\theta,m,\Gamma)$ given by
\begin{align*}
\hat{\tau}(a_{(n)}\otimes t^nT_c):=-\tau(a)_{(n)}\otimes \overline{t^nT_c}, \
\hat{\tau}(\mathbf c)=\mathbf c,\ a\in \mathcal A,n\in\mathbb Z, c\in \Gamma,
\end{align*}
which is a Lie involution of $\widehat{\mathcal A}(\theta,m,\Gamma)$.
 Now we define the twisted $\Gamma$-Lie algebra $\widehat{\mathcal A}_\tau(\theta,m,\Gamma)$ to be
 the set of fixed-points of $\widehat{\mathcal A}(\theta,m,\Gamma)$ under the involution $\hat{\tau}$.
Then one can see that the following elements
 \begin{align}\label{eq:e1}
 \tilde{a}(c,n):=a(c,n)+\hat{\tau}(a(c,n)),\ a\in \mathcal A, c\in \Gamma, n\in\mathbb Z,
 \end{align}
 together with the central element $\mathbf c$, span the algebra $\widehat{\mathcal A}_\tau(\theta,m,\Gamma)$.

\begin{rem} In the Lie algebra $\widehat{\mathcal A}_\tau(\mathrm{Id},1,\Gamma)$, we define a formal power
series $\tilde{a}(c,z)=\sum_{n\in\mathbb Z}\tilde{a}(c,n)z^{-n}$ for $a\in \mathcal A$ and $c\in \Gamma$.
Note that the formal power series satisfying the following so-called ``$\Gamma$-locality" (\cite{G-KK},\cite{Li})
$$(z_1-z_2)(c_1z_1-z_2)(z_1-c_2z_2)(c_1z_1-c_2z_2)[\widetilde{a_1}(c_1,z_1),\widetilde{a_2}(c_2,z_2)]=0,$$
where $a_1,a_2\in \mathcal A$ and $c_1,c_2\in \Gamma$. Motivated by the notion of $\Gamma$-conformal
algebra defined in \cite{G-KK} and $\Gamma$-vertex algebra defined in \cite{Li}, we call the Lie algebra
$\widehat{\mathcal A}_\tau(\theta,m,\Gamma)$ the twisted $\Gamma$-Lie algebra.
\end{rem}

 \begin{rem}Let $\mathcal A^{\mathrm{op}}$ be the opposite algebra of $\mathcal A$ and $\mathrm{ex}$ be the
exchange involution of $\mathcal A\oplus \mathcal A^{\mathrm{op}}$,
i.e., $\mathrm{ex}(a,a^{'})=(a^{'},a)$.
 Extend the automorphism $\theta$ and the bilinear
form $\la , \ra_\mathcal A$ of $\mathcal A$   to be an automorphism
and a bilinear form of $(\mathcal A\oplus \mathcal
A^{\mathrm{op}},\mathrm{ex})$ by
$\theta(a,a'):=(\theta(a),\theta(a'))$ and $\la (a,a'),(b,b')\ra_\mathcal A:=\la a,b\ra_\mathcal A+\la a',b'\ra_\mathcal A$,
respectively. Then we have
$\widehat{\mathcal A}(\theta,m,\Gamma)\cong \widehat{\mathcal A\oplus \mathcal A^{\mathrm{op}}
}_\mathrm{ex}(\theta,m,\Gamma).$
\end{rem}

\subsection{Examples} We denote by $\mathcal M_N$ the $N\times N$-matrix algebra over $\mathbb{C}$, and denote by $E_{i,j},1\le i,j\le N$
the unit matrices in $\mathcal M_N$. In what follows,
if $\mathcal A=\mathcal M_N$,
then the invariant bilinear form $\la ,\ra_\mathcal A$ is always taken to be the trace form.

\textbf{1. Twisted affine Lie algebras.} Let $(\mathcal A,\tau)$  be
a finite dimensional simple involutive associative algebra and
$\Gamma=\{1\}$,
 then the Lie algebra $\widehat{\mathcal A}_\tau(\theta,m,\Gamma)$ is a twisted affine Lie algebra. Moreover,
 all the classical affine Lie algebras, that is, the affine Lie algebras of type $X_l^{(r)}, r=1,2$ and
 $X=A,B,C,D$, can be realized by this way.

\textbf{2. Extended affine Lie algebras
$\widehat{\mathfrak{gl}}_N(\mathbb C_q)$.} Recall
 the Lie algebra $\widehat{\mathfrak{gl}}_N(\mathbb
C_q)$ defined in \cite{BGT}, which is a 1-dimensional central extension of the matrix algebra over the quantum torus $\mathbb
C_q$ associated to $q=(q_1,\cdots, q_l)\in (\mathbb C^{*})^l.$  Explicitly, it has a basis
 $E_{i,j}t_0^{n_0}t^\mathbf n$ and $ \mathbf c$, for $ 1\le i,j\le N,
\mathbf n\in \mathbb Z^l, n_0\in \mathbb Z,$  subject to
the Lie  relation
\begin{equation*}
\begin{split}
[E_{i,j}t_0^{n}t^\mathbf n,E_{k,p}t^{r_0}t^\mathbf r]=& q^{r_0\mathbf
n}\delta_{jk}E_{i,p}t_0^{n_0+r_0}t^{\mathbf n+\mathbf r}-
q^{n_0\mathbf r}\delta_{ip}E_{k,j}t_0^{n_0+r_0}t^{\mathbf n+\mathbf r}\\
&+n_0q^{n_0\mathbf
r}\delta_{jk}\delta_{ip}\delta_{n_0+r_0,0}\delta_{\mathbf n+\mathbf
r,0}\mathbf c,
\end{split}
\end{equation*}
where $t^\mathbf n=t_1^{n_1}\cdots t_l^{n_l}$ and $q^\mathbf
n=q_1^{n_1}\cdots q_l^{n_l}$, and $\mathbf c$ is central.
Assume that the subgroup $\Gamma_q$
of $\mathbb C^*$ generated by $q_1,\cdots,q_l$ is a free abelian group of rank $l$.
Then the Lie algebra $\widehat{\mathcal M_N\oplus \mathcal M_N^{\mathrm{op}}}_{\mathrm{ex}}(\mathrm{Id},1,\Gamma_q)$ is isomorphic to $\widehat{\mathfrak{gl}}_N(\mathbb C_q)$
via the isomorphism
$$\widetilde{E_{i,j}}(q^{\mathbf n},n_0)\mapsto E_{i,j}t_0^{n}t^\mathbf n,\ \mathbf c\mapsto \mathbf c,\  1\le i,j\le N, (n_0,\mathbf n)\in \mathbb Z^{l+1}.$$

\textbf{3. Trigonometric Lie algebras} (cf.\cite{G-KL1,G-KL2}). Given
a $l$-tuple $\mathbf h=(h_1,\cdots,h_l)\in \mathbb R^l$, we set
$\Gamma_\mathbf h=\{e^{2\sqrt{-1}(\mathbf h,\mathbf n)}|\mathbf
n=(n_1,\cdots,n_l)\in\mathbb Z^l\}$, where $(\mathbf h,\mathbf
n)=h_1n_1+\cdots+h_ln_l$.
 Assume that $h_1,\cdots, h_l$ are $\mathbb Z$-linearly independent so that $\Gamma_\mathbf h$ is generic.
Set $\mathcal A=\mathbb C\oplus \mathbb C,\tau=\mathrm{ex}$ and
$\Gamma=\Gamma_\mathbf h$. Then in $\widehat{\mathcal
A}_\tau(\mathrm{Id},1,\Gamma_\mathbf h)$, one has
\begin{align*} [A_{\mathbf n,n_0}, A_{\mathbf r,r_0}]
=2\sqrt{-1}\sin(n_0(\mathbf h,\mathbf r)-r_0(\mathbf h,\mathbf
n))A_{\mathbf n+\mathbf r,n_0+r_0}
+n_0\delta_{n_0+r_0,0}\delta_{\mathbf n,\mathbf r}\mathbf c,
\end{align*}
where $A_{\mathbf n,n_0}:=e^{-n_0\sqrt{-1}(\mathbf h,\mathbf
n)}\widetilde{(1,0)}(e^{-2\sqrt{-1}(\mathbf h,\mathbf n)},n_0).$
These are the commutator relations of the trigonometric Lie algebra
of series $\widehat{A}_\mathbf h$ defined in \cite{G-KL1}.
Furthermore, in $\widehat{\mathcal A}_\tau(\tau,2,\Gamma_\mathbf
h)$, one has
\begin{equation*}\begin{split}
[B_{\mathbf n,n_0}, B_{\mathbf r,r_0}] =&_{}2\sqrt{-1}\sin(n_0(\mathbf
h,\mathbf r)-r_0(\mathbf h,\mathbf n))B_{\mathbf n+\mathbf
r,n_0+r_0}
+(-1)^{r_0}2\sqrt{-1}\sin(n_0(\mathbf h,\mathbf r)\\
&+r_0(\mathbf h,\mathbf
n))B_{\mathbf n-\mathbf r,n_0+r_0}
+n_0\delta_{n_0+r_0,0}(\delta_{\mathbf n,\mathbf r}-(-1)^{n_0}
\delta_{\mathbf n,-\mathbf r})\mathbf c,
\end{split}\end{equation*}
where $B_{\mathbf n,n_0}:=2e^{-n_0\sqrt{-1}(\mathbf h,\mathbf
n)}\widetilde{(1,0)}(e^{-2\sqrt{-1}(\mathbf h,\mathbf n)},n_0).$
 Then the Lie algebra $\widehat{\mathcal A}_\tau(\tau,2,\Gamma)$
  is isomorphic to the trigonometric Lie algebra of series $\widehat{B}_\mathbf h$ (cf.\cite{G-KL1}).

\textbf{4. Unitary Lie algebras $\widehat{\mathfrak{u}}_N(\mathbb
C_\Gamma)$.}  The Lie algebra $\widehat{\mathfrak{u}}_N(\mathbb
C_\Gamma)$ defined in \cite{CGJT} is spanned by the elements
$u_{i,j}(c,n),\mathbf c, 1\le i,j\le N,c\in \Gamma,n\in \mathbb Z$
with the relation $u_{i,j}(c,n)= -(-c)^{-n}u_{j,i}(c^{-1},n)$, and
subject to
\begin{equation*}\begin{split}
&[u_{i,j}(c_1,n),u_{k,l}(c_2,r)]
=\delta_{jk}c_1^ru_{i,l}(c_1c_2,n+r)\\&+\delta_{il}(-c_1)^{-n-r}c_2^{-r}u_{j,k}(c_1^{-1}c_2^{-1},n+r)
-\delta_{ik}(-1)^nc_1^{-n-r}u_{j,l}(c_1^{-1}c_2,n+r)\\
&-\delta_{jl}c_1^r(-c_2)^{-n}u_{i,k}(c_1c_2^{-1},n+r)+n\delta_{n+r,0}(\delta_{jk}\delta_{il}\delta_{c_1c_2,1}n
-\delta_{ik}\delta_{jl}\delta_{c_1,c_2}(-1)^n)\mathbf c,
\end{split}\end{equation*}
where $\mathbf c$ is a central element.
Choose $\mathcal A=\mathcal M_N\oplus \mathcal M_N^{\mathrm{op}},
\tau=\mathrm{ex},$ $\theta:(A,B)\mapsto (B^t,A^t)$ and $m=2$, where
$A,B\in \mathcal M_N, A^t$ is the transpose of $A$. It is easy to
check that the following linear map gives an isomorphism from the
unitary Lie algebra $\widehat{\mathfrak{u}}_N(\mathbb C_\Gamma)$ to
the twisted $\Gamma$-Lie algebra $\widehat{\mathcal
A}_\tau(\theta,2,\Gamma)$:
$$u_{i,j}(c,n)\mapsto 2\widetilde{(E_{i,j},0)}(c,n),\ \mathbf c\mapsto \mathbf c,\ 1\le i,j\le N,c\in \Gamma, n\in \mathbb Z.$$

\textbf{5. $BC_N$-graded Lie algebras
$\widehat{\mathfrak{o}}_{2N}(\mathbb C_\Gamma)$.} Following
\cite{CT}, the Lie algebra $\widehat{\mathfrak{o}}_{2N}(\mathbb
C_\Gamma)$ is spanned by the elements
$f_{\rho_ii,\rho_jj}(c,n),\mathbf c,\ 1\le i,j \le N,
\rho_i,\rho_j=\pm 1, c\in \Gamma,n\in \mathbb Z,$ with relation
$f_{\rho_ii,\rho_jj}(c,n)=
-(-c)^{-n}f_{-\rho_jj,-\rho_ii}(c^{-1},n)$. The Lie bracket in
$\widehat{\mathfrak{o}}_{2N}(\mathbb C_\Gamma)$ is given by
\begin{equation*}\begin{split}
&[f_{\rho_ii,\rho_jj}(c_1,n),f_{\rho_kk,\rho_ll}(c_2,r)]
=
\delta_{\rho_jj,\rho_kk}
c_1^rf_{\rho_ii,\rho_ll}(c_1c_2,n+r)
+\delta_{\rho_ii,\rho_ll}c_1^{-n-r}c_2^{-r}\\&f_{-\rho_jj,-\rho_kk}(c_1^{-1}c_2^{-1},n+r)
-\delta_{\rho_ii,-\rho_kk}
c_1^{-n-r}
 f_{-\rho_jj,\rho_ll}(c_1^{-1}c_2,n+r)
-\delta_{-\rho_jj,\rho_ll}
(c_1/c_2)^r\\&f_{\rho_ii,-\rho_kk}(c_1c_2^{-1},n+r)+\delta_{\rho_jj,\rho_kk}\delta_{\rho_ii,\rho_ll}\delta_{n+r,0}
c_1^r\delta_{c_1c_2,1}n\textbf{c}
-\delta_{\rho_ii,-\rho_kk}\delta_{\rho_jj,-\rho_ll}
\delta_{n+r,0}\delta_{c_1,c_2}n\mathbf{c}.
\end{split}\end{equation*}
Choose $\mathcal A=\mathcal M_{2N},
\tau:E_{\rho_ii,\rho_jj}\mapsto E_{-\rho_jj,-\rho_ii},$ $\theta=\mathrm{Id}$ and $m=1$, where
$1\le i,j\le N,\rho_i,\rho_j=\pm 1$. Then the following linear map gives an isomorphism from the
$BC_N$-graded Lie algebra $\widehat{\mathfrak{o}}_{2N}(\mathbb C_\Gamma)$ to
the twisted $\Gamma$-Lie algebra $\widehat{\mathcal
A}_\tau(\mathrm{Id},1,\Gamma)$:
$$f_{\rho_ii,\rho_jj}(c,n)\mapsto \widetilde{E_{\rho_ii,\rho_jj}}(c,n),\ \mathbf c\mapsto \mathbf c,\
1\le i,j\le N,\rho_i,\rho_j=\pm 1,c\in \Gamma, n\in \mathbb Z.$$

\subsection{Lattice construction of involutive associative algebras}
One of the most important step in the vertex operator representation
theory of affine Lie algebras is the lattice construction for semi-simple Lie algebras. In this section, we give the
lattice construction for involutive associative algebras, which will
be used to present vertex operator representations for the twisted
$\Gamma$-Lie algebras.

Given a triple $(Q,\nu,m)$
which satisfies assumptions (A2)-(A5). We define a set
\begin{align}\label{eq:1.1}
\mathcal{J}=\{(\rho_ii,\rho_jj)|\rho_i\epsilon_i-\rho_j\epsilon_j\in
Q,1\le i,j\le N, \rho_i,\rho_j=\pm 1\}.
\end{align}
Let
$\mathcal G(Q)=\bigoplus_{(\rho_ii,\rho_jj)\in \mathcal{J}} \mathbb{C}e_{\rho_ii,\rho_jj}$
 be a vector space
over $\mathbb{C}$, where
$\{e_{\rho_ii,\rho_jj}\}_{(\rho_ii,\rho_jj)\in \mathcal J}$
is a set of symbols. We define  multiplication on $\mathcal G(Q)$ by
\begin{align*}
 e_{\rho_ii,\rho_jj}e_{\rho_kk,\rho_ll}=\delta_{\rho_jj,\rho_kk}
 \varepsilon(\rho_i\epsilon_i-\rho_j\epsilon_j,\rho_k\epsilon_k-\rho_l\epsilon_l)
e_{\rho_ii,\rho_ll},
\end{align*}
where $(\rho_ii,\rho_jj),(\rho_kk,\rho_ll)\in \mathcal{J}$ and  $\varepsilon:Q \times Q\rightarrow \mathbb C$ is a
normalized 2-cocycle on $Q$ associated with the function $(-1)^{\la \alpha,\beta\ra}$, that is, it satisfies the conditions
\begin{equation}\begin{split} \label{eq:c1}
   &\varepsilon(\alpha,\beta)\varepsilon(\alpha+\beta,\gamma)=
   \varepsilon(\beta,\gamma)\varepsilon(\alpha,\beta+\gamma),\\
   &\varepsilon(0,0)=1,\ \varepsilon(\alpha,\beta)/\varepsilon(\beta,\alpha)=(-1)^{\la\alpha,\beta\ra}.
\end{split} \end{equation}
 Define a linear map $\tau$ and a bilinear form $\la , \ra_\mathcal G$ on $\mathcal G(Q)$ as follows
 \begin{align*}
\tau(e_{\rho_ii,\rho_jj})&=(-1)^{1-\delta_{ij}} e_{-\rho_jj,-\rho_ii},\\
 \la e_{\rho_ii,\rho_jj},e_{\rho_kk,\rho_ll}\ra_\mathcal G&=\delta_{\rho_jj,\rho_kk}\delta_{\rho_ii,\rho_ll}
\varepsilon(\rho_i\epsilon_i-\rho_j\epsilon_j,\rho_k\epsilon_k-\rho_l\epsilon_l)
\end{align*}
for $(\rho_ii,\rho_jj),(\rho_kk,\rho_ll)\in \mathcal{J}$. It is easy
to see that  $(\mathcal G(Q),\tau)$ is an involutive associative
algebra and $\la , \ra_\mathcal G$ is an invariant symmetric
bilinear form on it.

Recall that $\nu$ is an isometry of $P$. For any $1\leq i\leq
N$, since
$\langle\nu(\epsilon_i),\nu(\epsilon_i)\rangle=1$, there exist $\iota_i=\pm 1$ and a permutation
$\sigma\in S_N$ such that
$\nu(\epsilon_i)=\iota_i\epsilon_{\sigma(i)}$.
We introduce the following notation for later used
\begin{align} \label{eq:n2}
i_0:=i \text{ and }i_r:=\left(\prod_{p=0}^{r-1}\iota_{\sigma^p(i)}\right) \sigma^r(i)\ 1\leq i\leq N,\ 1\le r\le m-1.
\end{align}

We say an automorphism $\theta$ of $(\mathcal G(Q),\tau)$ is compatible with the isometry $\nu$ if
there exists a function $\eta: \mathbb Z_m\times Q\rightarrow \mathbb C^*$ such that
\begin{align}\label{eq:c2}
\theta^r(e_{\rho_ii,\rho_jj})=\eta(r,\rho_i\epsilon_i-\rho_j\epsilon_j)e_{\rho_ii_r,\rho_jj_r},\ r\in \mathbb Z_m, (\rho_ii,\rho_jj)\in \mathcal{J}.
\end{align}
Given an automorphism $\theta$ of $(\mathcal G(Q),\tau)$ which
satisfies $\theta^{m}=$Id, preserves the bilinear form $\la
,\ra_\mathcal G$, and is compatible with the isometry $\nu$. Then we
have a twisted $\Gamma$-Lie algebra $\widehat{\mathcal
G(Q)}_\tau(\theta,m,\Gamma)$, which is also denoted by $\widehat{\mathcal
G}(Q,\nu,m,\Gamma)$.

Recall the elements $\widetilde{e_{\rho_ii,\rho_jj}}(c,n),
(\rho_ii,\rho_jj)\in \mathcal{J}, n\in \mathbb{Z},c\in \Gamma$ defined in (\ref{eq:e1}), which together with
$\mathbf c$ span the Lie algebra
$\widehat{\mathcal{G}}(Q,\nu,m,\Gamma)$.
For any $(\rho_ii,\rho_jj)\in \mathcal J, c\in \Gamma$ and $n\in \mathbb Z$, we set
$
G_{\rho_ii,\rho_jj}(c,z)=\sum_{n\in \mathbb Z}\widetilde{e_{\rho_ii,\rho_jj}}(c,n)z^{-n}.
$
Then we have the following proposition whose verification
is straightforward.
\begin{prop} \label{prop:cr} Let $(\rho_ii,\rho_jj),(\rho_kk,\rho_ll)\in \mathcal{J},r\in \mathbb Z_m$ and
$c_1,c_2\in \Gamma$, then
\begin{align*}
&G_{\rho_ii,\rho_jj}(c_1,z)=(-1)^{\delta_{ij}}G_{-\rho_jj,-\rho_ii}(c_1^{-1},c_1z),\\
&G_{\rho_ii,\rho_jj}(c_1,\omega^{-r}z)=\eta(r,\rho_i\epsilon_i-\rho_j\epsilon_j)G_{\rho_ii_r,\rho_jj_r}(c_1,z).
\end{align*}
and
\begin{equation}\label{eq:cr}\begin{split}
&\qquad [G_{\rho_ii,\rho_jj}(c_1,z_1),G_{\rho_kk,\rho_ll}(c_2,z_2)]\\
=&\quad m^{-1}\sum_{r\in
\mathbb{Z}_m}\delta_{\rho_ii,-\rho_kk_r}(-1)^{\delta_{ij}}
\xi_r(\underline{\alpha},\underline{\beta})G_{-\rho_jj,\rho_ll_r}(c_1^{-1}c_2,c_1z_1)
\delta(\omega^rz_2/z_1)\\
&+m^{-1}\sum_{r\in \mathbb{Z}_m}\delta_{\rho_jj,\rho_kk_r}
\xi_r(\underline{\alpha},\underline{\beta})G_{\rho_ii,\rho_ll_r}(c_1c_2,z_1)
\delta(\omega^rz_2/c_1z_1)\\
&+m^{-1}\sum_{r\in
\mathbb{Z}_m}\delta_{\rho_ii,\rho_ll_r}(-1)^{\delta_{ij}+\delta_{kl}}
\xi_r(\underline{\alpha},\underline{\beta})G_{-\rho_jj,-\rho_kk_r}(c_1^{-1}c_2^{-1},c_1z_1)
\delta(\omega^rc_2z_2/z_1)\\
&+m^{-1}\sum_{r\in
\mathbb{Z}_m}\delta_{-\rho_jj,\rho_ll_r}(-1)^{\delta_{kl}}
\xi_r(\underline{\alpha},\underline{\beta})G_{\rho_ii,-\rho_kk_r}(c_1c_2^{-1},z_1)
\delta(\omega^rc_2z_2/c_1z_1)\\
&+m^{-2}\sum_{r\in \mathbb{Z}_m}\delta_{c_1c_2,1}
\delta_{\rho_jj,\rho_kk_r}\delta_{\rho_ii,\rho_ll_r}
\xi_r(\underline{\alpha},\underline{\beta})(D\delta)(\omega^rz_2/c_1z_1)\\
&+m^{-2}\sum_{r\in
\mathbb{Z}_m}\delta_{c_1,c_2}(-1)^{\delta_{ij}}\delta_{\rho_ii,-\rho_kk_r}\delta_{\rho_jj,-\rho_ll_r}
\xi_r(\underline{\alpha},\underline{\beta})(D\delta)(\omega^rz_2/z_1),
\end{split}\end{equation}
where $\underline{\alpha}:=\rho_i\epsilon_i-\rho_j\epsilon_j,\
\underline{\beta}:=\rho_k\epsilon_k -\rho_l\epsilon_l$,
$\xi_r(\underline{\alpha},\underline{\beta})=\varepsilon(\underline{\alpha},\nu^r\underline{\beta})\eta(r,\underline{\beta}),$
and $\delta(z)=\sum_{n\in \mathbb Z}z^n,\ (D\delta)(z)=\sum_{n\in
\mathbb Z} nz^n.$ \qed
\end{prop}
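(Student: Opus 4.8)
The plan is to establish the three displayed assertions in order of increasing difficulty. The two scalar identities come straight from the definition \eqref{eq:e1} of $\widetilde{e_{\rho_ii,\rho_jj}}(c,n)=e_{\rho_ii,\rho_jj}(c,n)+\hat{\tau}(e_{\rho_ii,\rho_jj}(c,n))$. For the first, I would set $a=e_{\rho_ii,\rho_jj}$ and compute $\hat{\tau}(a(c,n))=(-1)^{\delta_{ij}}c^{-n}e_{-\rho_jj,-\rho_ii}(c^{-1},n)$ from $\tau(e_{\rho_ii,\rho_jj})=(-1)^{1-\delta_{ij}}e_{-\rho_jj,-\rho_ii}$ and $\overline{t^nT_c}=c^{-n}t^nT_{c^{-1}}$; since $\widetilde{a}(c,n)$ is $\hat{\tau}$-fixed this gives $\widetilde{e_{\rho_ii,\rho_jj}}(c,n)=(-1)^{\delta_{ij}}c^{-n}\widetilde{e_{-\rho_jj,-\rho_ii}}(c^{-1},n)$, and multiplying by $z^{-n}$ and summing over $n$ yields the first formula. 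For the second, I would combine the compatibility relation \eqref{eq:c2} with the elementary identity $(\theta^r a)_{(n)}=\omega^{rn}a_{(n)}$ (immediate from $a_{(n)}=m^{-1}\sum_{p\in\mathbb Z_m}\omega^{-np}\theta^p(a)$) to obtain $(e_{\rho_ii_r,\rho_jj_r})_{(n)}=\omega^{rn}\eta(r,\underline{\alpha})^{-1}(e_{\rho_ii,\rho_jj})_{(n)}$, hence $\widetilde{e_{\rho_ii_r,\rho_jj_r}}(c,n)=\omega^{rn}\eta(r,\underline{\alpha})^{-1}\widetilde{e_{\rho_ii,\rho_jj}}(c,n)$ — the $\hat{\tau}$-part behaves the same way, since $e_{-\rho_jj,-\rho_ii}$ carries the same root $\underline{\alpha}$ and $\delta_{i_rj_r}=\delta_{ij}$ — and passing again to generating series gives the second formula.

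For the commutator \eqref{eq:cr} the plan is to compute $[\widetilde{e_{\rho_ii,\rho_jj}}(c_1,n),\widetilde{e_{\rho_kk,\rho_ll}}(c_2,s)]$ in $\widehat{\mathcal G}(Q,\nu,m,\Gamma)$, multiply by $z_1^{-n}z_2^{-s}$, and sum over $n,s\in\mathbb Z$. Put $a=e_{\rho_ii,\rho_jj}$, $b=e_{\rho_kk,\rho_ll}$. Expanding $\widetilde{a}(c_1,n)=a(c_1,n)+\hat{\tau}(a(c_1,n))$, using bilinearity, and using that $\hat{\tau}$ is a Lie automorphism of the central extension $\widehat{\mathcal G(Q)}(\theta,m,\Gamma)$ (so $[\hat{\tau}x,\hat{\tau}y]=\hat{\tau}[x,y]$), the bracket equals $\bigl(T_1+\hat{\tau}(T_1)\bigr)+\bigl(T_2+\hat{\tau}(T_2)\bigr)$, where $T_1=[a(c_1,n),b(c_2,s)]$ and $T_2=[a(c_1,n),\hat{\tau}(b(c_2,s))]=(-1)^{\delta_{kl}}c_2^{-s}[a(c_1,n),e_{-\rho_ll,-\rho_kk}(c_2^{-1},s)]$, both brackets taken in $\widehat{\mathcal G(Q)}(\theta,m,\Gamma)$; thus $T_2$ has the same shape as $T_1$ with $b$ replaced by a scalar multiple of $e_{-\rho_ll,-\rho_kk}(c_2^{-1},s)$, so only the two computations $T_1$ and $T_2$ are genuinely needed.

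To evaluate $T_1$ I would unwind $a(c_1,n)=a_{(n)}\otimes t^nT_{c_1}$, use the product of $\mathbb C_\Gamma$, and then the two book-keeping identities $a_{(n)}b_{(s)}=(ab_{(s)})_{(n+s)}$ and $(\theta^p a)_{(n)}=\omega^{np}a_{(n)}$; together with the multiplication rule of $\mathcal G(Q)$ and \eqref{eq:c2}, these turn $a_{(n)}b_{(s)}$ into the single $\mathbb Z_m$-sum $m^{-1}\sum_{q\in\mathbb Z_m}\omega^{-sq}\xi_q(\underline{\alpha},\underline{\beta})\delta_{\rho_jj,\rho_kk_q}(e_{\rho_ii,\rho_ll_q})_{(n+s)}$ with $\xi_q(\underline{\alpha},\underline{\beta})=\varepsilon(\underline{\alpha},\nu^q\underline{\beta})\eta(q,\underline{\beta})$, and symmetrically for $b_{(s)}a_{(n)}$; the central part of $T_1$ is $\langle a(c_1,n),b(c_2,s)\rangle\mathbf{c}$, and reading the form $\langle a,b\rangle_{\mathcal G}$ there as $\langle a_{(n)},b_{(-n)}\rangle_{\mathcal G}=m^{-1}\sum_q\omega^{nq}\langle a,\theta^q b\rangle_{\mathcal G}$ produces a $\mathbb Z_m$-sum with coefficient $\xi_q(\underline{\alpha},\underline{\beta})\delta_{\rho_jj,\rho_kk_q}\delta_{\rho_ii,\rho_ll_q}$. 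Adding $\hat{\tau}(T_1)$ then replaces each $e_{*}(c,t)$ by $\widetilde{e_{*}}(c,t)$ and doubles the central scalar, which is precisely compensated by the factor $1/2$ in $\langle\,,\,\rangle$; the term $T_2+\hat{\tau}(T_2)$ is computed identically with $b$ replaced by $(-1)^{\delta_{kl}}c_2^{-s}e_{-\rho_ll,-\rho_kk}(c_2^{-1},s)$. Finally I would pass to generating functions: after multiplying by $z_1^{-n}z_2^{-s}$ and summing, the constraint $n+s=t$ (resp. $n+s=0$ on the central terms) together with the surviving powers of $\omega,c_1,c_2$ collapses the double sum via $\sum_{k\in\mathbb Z}w^k=\delta(w)$ and $\sum_{k\in\mathbb Z}kw^k=(D\delta)(w)$, and $\delta(w)=\delta(1/w)$ puts each argument into the stated form. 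The four sources $T_1,\hat{\tau}(T_1),T_2,\hat{\tau}(T_2)$ supply, respectively, the $\Gamma$-parameters $c_1c_2,\ c_1^{-1}c_2^{-1},\ c_1c_2^{-1},\ c_1^{-1}c_2$ together with their central pieces, and matching the $b_{(s)}a_{(n)}$-type contributions and the $\hat{\tau}$-images against the six lines of \eqref{eq:cr} is carried out by invoking the two scalar identities already proved, which rewrite any $G_{\rho_kk,\rho_jj_q}$-type series into one of the four canonical forms $G_{\rho_ii,\rho_ll_r}$, $G_{-\rho_jj,\rho_ll_r}$, $G_{-\rho_jj,-\rho_kk_r}$, $G_{\rho_ii,-\rho_kk_r}$. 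I expect the main obstacle to be purely organizational: keeping careful track of signs, of the twisted indices $i_r,j_r,k_r,l_r$ under the several reindexings, and of the cocycle normalization against the $\hat{\tau}$-doubling, so that everything collapses exactly to the six listed terms; there is no conceptual difficulty beyond this, which is why the verification may reasonably be called straightforward.
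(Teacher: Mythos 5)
Your proposal is correct and is exactly the direct verification from the definitions that the paper itself omits (it states only that the verification is straightforward): the two scalar identities follow from $\hat{\tau}(e_{\rho_ii,\rho_jj}(c,n))=(-1)^{\delta_{ij}}c^{-n}e_{-\rho_jj,-\rho_ii}(c^{-1},n)$ and $(\theta^r a)_{(n)}=\omega^{rn}a_{(n)}$ as you say, and the fourfold expansion $[a+\hat{\tau}a,\,b+\hat{\tau}b]$ together with $a_{(n)}b_{(s)}=(ab_{(s)})_{(n+s)}=m^{-1}\sum_{q}\omega^{-sq}(a\theta^q(b))_{(n+s)}$ is precisely the intended route to the six terms of \eqref{eq:cr}. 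The only caveat is that the remaining bookkeeping you defer (matching the $ba$-type and $\hat{\tau}$-image contributions to the stated cocycle factors $\xi_r(\underline{\alpha},\underline{\beta})$, e.g.\ reconciling $\varepsilon(-\underline{\alpha},\nu^q\underline{\beta})$ with $\varepsilon(\underline{\alpha},\nu^q\underline{\beta})$ under the active Kronecker conditions) is genuinely where the work lies, but your plan handles it correctly via the two scalar identities.
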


\section{Generalized Fock space and twisted $\Gamma$-vertex operators}\label{sec 2}

In this section we define the generalized Fock space and then
construct a family of twisted $\Gamma$-vertex operators. We also
present some properties of these operators. Recall that $P=\mathbb
Z\epsilon_1\oplus \cdots \oplus \mathbb Z\epsilon_N$ is a lattice of
rank $N$, and $(Q,\nu,m,\Gamma)$ is a quadruple satisfying the
assumptions (A1)-(A5).

First we introduce a Heisenberg algebra
$\mathcal H(\nu)$ associated to the pair $(P,\nu)$. Let
$\mathcal{H}=P\otimes_{\mathbb{Z}}\mathbb{C}$. We extend the
isometry $\nu$ of $P$ to a linear automorphism of $\mathcal{H}$, and
extend the bilinear form $\langle , \rangle$ on $P$ to a $\mathbb{C}$-bilinear
non-degenerate symmetric form  on $\mathcal H$. For any $n\in \mathbb Z$ and
$h\in \mathcal H$, we define
$h_{(n)}=\sum_{p\in \mathbb Z_m}\omega^{-np}\nu^p(h)\ \text{and}\
\mathcal H_{(n)}=\{h_{(n)}|h\in \mathcal H\}.$
Define a Heisenberg algebra
$$\mathcal H(\nu)=\mathrm{span}_\mathbb C\{x(n),1| n\in \mathbb Z\setminus \{0\}, x\in \mathcal H_{(n)}\}$$
with Lie bracket
\begin{align*}
[x(n),y(r)]=m^{-1}\la x,y \ra n\delta_{n+r,0},\ n,r\in \mathbb{Z}\setminus \{0\}, x\in \mathcal{H}_{(n)},y\in \mathcal{H}_{(r)}.
 \end{align*}
Let $S:=S(\mathcal H(\nu)^-)$ be the symmetric algebra over the
commutative
 subalgebra  $\mathcal{H}(\nu)^-$ of $\mathcal H(\nu)$ spanned by the elements $x(n)$ for $x\in \mathcal H_{(n)}$ and $n<0$.
 It is well-known that the Heisenberg algebra $\mathcal H(\nu)$ has a canonical irreducible representation on
 the symmetric algebra $S$.
 Let $z,z_1,z_2$ be formal variables and $\alpha \in \mathcal{H}$. We set
 \begin{align*}
  E^{\pm}(\alpha,z)=\exp(-\sum_{\pm n\in \mathbb Z_+}m\frac{\alpha_{(n)}(n)}{n}z^{-n})\in \mbox{End}(S)[[z^{\mp 1}]].
 \end{align*}

 Recall that $\omega$ is a
primitive $m$-th root of unity. Let $\omega_0$ be a primitive
$m_0$-th root of unity such that $\omega_0^{\frac{m_0}{m}}=\omega$,
where $m_0=m$ if $m$ is even, and $m_0=2m$ if $m$ is odd. Let $\la
\omega_0 \ra$ be the cyclic subgroup of $\mathbb C^*$ generated by
$\omega_0$. Then we have $-1,\omega\in \la \omega_0 \ra$. Following
\cite{L}, we define a function $C: Q\times Q\rightarrow
\la\omega_0\ra$ by
\begin{align*} C(\alpha,\beta)=\prod_{p\in \mathbb{Z}_m}(-\omega^{-p})^{\la\alpha,\nu^p\beta\ra},
                     \ \alpha,\beta\in Q.
\end{align*}
Let $\varepsilon_C:Q \times Q\rightarrow \la\omega_0\ra$ be a
normalized $2$-cocycle associated with the function $C$. That is,
$\varepsilon_C$ satisfies the following conditions
   \begin{equation}\label{eq:2.1}\begin{split}
   &\varepsilon_C(\alpha,\beta)\varepsilon_C(\alpha+\beta,\gamma)=
   \varepsilon_C(\beta,\gamma)\varepsilon_C(\alpha,\beta+\gamma),\\
   &\varepsilon_C(0,0)=1,\ \varepsilon_C(\alpha,\beta)/\varepsilon_C(\beta,\alpha)=C(\alpha,\beta),
  \end{split}\end{equation}
 for $\alpha,\beta,\gamma\in Q$.
Now we define a twisted group algebra $\mathbb
C[Q,\varepsilon_C]=\oplus_{\alpha\in Q}\mathbb C e_\alpha$
 with multiplication
$e_\alpha e_\beta=\varepsilon_C(\alpha,\beta) e_{\alpha+\beta},\ \alpha,\beta\in Q.$
Such a twisted group algebra can also be obtained in the following
way. Let $\widehat{Q}$ be the unique (up to equivalence) central
extension of $Q$ by the cyclic group   $\la\kappa_0\ra$ of order
$m_0$ such that
$$aba^{-1}b^{-1}=\prod_{p\in \mathbb{Z}_m}(\kappa_0^{\frac{m_0}{2}
-\frac{m_0p}{m}})^{\la\bar{a},\nu^p\bar{b}\ra},\ a,b\in \widehat{Q},$$
where $\ \bar \ : \widehat{Q}\rightarrow Q$ is the natural
homomorphism. It is known that, by choosing a section of
$\widehat{Q}$, the algebra $\mathbb C[Q,\varepsilon_C]$ is
isomorphic to the quotient algebra $\mathbb C\{Q\}:=\mathbb
C[\widehat{Q}]/(\kappa_0-\omega_0)\mathbb C[\widehat{Q}]$ ($\mathbb
C[\ \cdot \ ]$ denotes the group algebra).

Now we are in a position to give the definition of the generalized
Fock space. Let $\hat{\nu}$ be an automorphism of  $\mathbb
C[Q,\varepsilon_C]$ such that
\begin{align}\label{eq:2..2}
\hat{\nu}(e_\alpha)\in \mathbb C e_{\nu(\alpha)}\ \text{and}\
\hat{\nu}^m=\mathrm{Id}.
\end{align}
Let $T$ be a $\mathbb C[Q,\varepsilon_C]$-module. Following \cite{DL}, we assume that $\mathcal H_{(0)}$ acts on $T$ in such a
way that
$$T=\oplus_{\alpha\in Q} T_{\alpha_{(0)}},\quad
\text{where}\quad T_{\alpha_{(0)}}=\{t\in T|h.t=\la h,\alpha_{(0)}\ra,\ h\in
\mathcal H_{(0)}\},$$ and  that the actions of $\mathbb
C[Q,\varepsilon_C]$ and $\mathcal H_{(0)}$ on $T$ are compatible in
the sense that
\begin{align}\label{eq:2.2}
e_\alpha.b\in T_{(\alpha+\beta)_{(0)}},\ e_\alpha^{-1}
\hat{\nu}(e_\alpha).b=\omega^{-\la\sum \nu^p\alpha,\beta+\frac
\alpha 2\ra}b,
\end{align}
for $\alpha,\beta\in Q$ and $b\in T_{\beta_{(0)}}$.
 The following
remark shows the existence of the automorphism $\hat{\nu}$ and
module $T$.
\begin{rem}  It was proved in  Section 5 of \cite{L} that $\nu$ can be lifted to be an automorphism $\hat{\nu}$ of
$\widehat{Q}$ such that
$$\hat{\nu}(\kappa_0)=\kappa_0,\ \hat{\nu}^m=1\ \text{and}\ (\hat{\nu}(a))^-=\nu(\bar{a}),\ a\in \widehat{Q},$$
which implies that $\hat{\nu}$ induces an automorphism, still called $\hat{\nu}$, of the
quotient algebra $\mathbb C\{Q\}$. Note that $\mathbb C\{Q\} \cong \mathbb
C[Q,\varepsilon_C]$, thus $\hat{\nu}$ is an automorphism of
$\mathbb C[Q,\varepsilon_C]$ satisfying (\ref{eq:2..2}). Next we
show the existence of the module $T$. Set $\mathcal N=\{\alpha\in
Q|\la\alpha,\beta_{(0)}\ra=0, \forall \beta\in Q\}$. Let
$\widehat{\mathcal N}\subset \widehat{Q}$ be the pull back of
$\mathcal N$ in $Q$. In \cite{L}, a certain class of induced
$\widehat{Q}$-module $T=\mathbb C[\widehat{Q}]\otimes_{\mathbb
C[\widehat{N}]}T'$ ($T'$ is an $\widehat{\mathcal N}$ module), on
which $\kappa_0$ acts by $\omega_0$ and
 $a^{-1}\hat{\nu}(a)$ acts by $\omega^{-\la \sum \nu^p \bar{a},\bar{a}\ra/2}$ for $a\in \widehat{Q}$, was constructed.
 We define a $\mathcal H_{(0)}$-action on $T$ by $h.(b\otimes t)=\la h,\bar{b}\ra b\otimes t$
 for $h\in \mathcal H_{(0)},
b\in \widehat{Q}$ and $t\in T'$. One easily checks
$\la \mathcal H_{(0)}, \alpha\ra =0, \alpha\in \mathcal N$, which implies  the
action is well-defined. For $\alpha\in Q$, set
$T_{\alpha_{(0)}}=\text{span}_\mathbb C\{\alpha_{(0)}\otimes
t'|t'\in T'\}$. Then we have $h.t=\la h,\alpha_{(0)}\ra t$ for $h\in
\mathcal H_{(0)},t\in T_{\alpha_{(0)}}$. Finally, it is easy to
check that the action of $\mathcal H_{(0)}$ is compatible with the
natural action of $\mathbb C[Q,\varepsilon_C]$ on $T$. \qed
\end{rem}

Now we define the generalized Fock space $V_T=T\otimes S$ to be the
tensor product of the $\mathbb C[Q,\varepsilon_C]$-module
 $T$ and the $\mathcal H(\nu)$-module $S$.
For $\alpha\in \mathcal H_{(0)}, \beta\in Q$ and $h\in \mathcal
H(\nu)$, we also define certain operators acting on $V_T$ as follows
 \begin{align*}
 \alpha(0).(t\otimes s)=\la\alpha,\gamma\ra t\otimes s,\
e_\beta.(t\otimes s)=(e_\beta.t)\otimes s,\
 h.(t\otimes s)= t\otimes (h.s),
 \end{align*}
  where $t\in T_{\gamma_{(0)}},\gamma\in Q$ and $s\in S$.
  In case $\alpha\in \mathcal H_{(0)}$ such that $\la \alpha,Q\ra \in \mathbb Z$, for  a formal variable $z$ and a nonzero complex number $c$, we define operators
  \begin{align*}
  z^\alpha.(t\otimes s)=z^{\la \alpha,\gamma\ra}t\otimes s,\
  c^\alpha.(t\otimes s)=c^{\la \alpha,\gamma\ra}t\otimes s.
  \end{align*}

For $\alpha\in \mathcal H$, we set
$\alpha(z)=\sum_{n\in \mathbb Z}\alpha_{(n)}(n)z^{-n}.$
The following facts about the operators defined above can be easily
verified.

\begin{lem}\label{lem:2.2}
  For $\alpha,\beta\in Q, \gamma, \gamma_1,\gamma_2\in \mathcal H,c\in \Gamma$ and
  $r\in \mathbb{Z}_m$, we have
  \begin{align*}
  &\hat{\nu}e_\alpha=e_\alpha\omega^{-\sum \nu^p\alpha-\la\alpha,
  \sum\nu^p\alpha\ra/2},\ \nu^r\gamma(z)=\gamma(\omega^{-r}z),\ E^\pm(\nu^r\gamma,z)=E^\pm(\gamma,\omega^{-r}z),\\
  &z^{\sum\nu^p\gamma} e_\beta=e_\beta z^{\sum \nu^p\gamma+\sum\la\nu^p\gamma,
  \beta\ra},\ c^{\sum\nu^p\gamma} e_\beta=e_\beta c^{\sum\nu^p\gamma+\sum\la\nu^p\gamma,\beta\ra},\\
  &e_\alpha e_\beta=C(\alpha,\beta)e_\beta
  e_\alpha,\ [\gamma_{(0)}(0),e_\beta]=m^{-1}\la\sum \nu^p\gamma,\beta\ra e_\beta,\\
  &[\gamma_1(z_1),E^{\pm}(\gamma_2,z_2)]
  =\sum_{p\in \mathbb{Z}_m}\frac{\la\gamma_1,\nu^p\gamma_2\ra}{m}
  \big(\sum_{\mp n>0}(\omega^pz_2/z_1)^nE^{\pm}(\gamma_2,z_2)\big),\\
  &E^+(\gamma_1,z_1)E^-(\gamma_2,z_2)=E^-(\gamma_2,z_2)E^+(\gamma_1,z_1)
  \prod_{p\in \mathbb{Z}_m}(1-\omega^pz_2/z_1)^{\la\gamma_1,\nu^p\gamma_2\ra}.\qed
  \end{align*}
\end{lem}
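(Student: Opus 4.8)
The plan is to verify each of the stated identities by a direct computation from the definitions, handling the assertions roughly in order of increasing bookkeeping, since the later ones reuse the elementary observations behind the earlier ones. The first routine step is to record the re-indexing identity $(\nu^r\gamma)_{(n)} = \omega^{nr}\gamma_{(n)}$ for $n \in \mathbb{Z}$, $r \in \mathbb{Z}_m$, which is immediate from the defining sum $h_{(n)} = \sum_{p} \omega^{-np}\nu^p h$ after the substitution $p \mapsto p+r$; substituting it into $\gamma(z) = \sum_n \gamma_{(n)}(n) z^{-n}$ and into the exponent of $E^{\pm}(\gamma,z)$ gives the second and third identities at once. The sixth identity $e_\alpha e_\beta = C(\alpha,\beta) e_\beta e_\alpha$ follows from $e_\alpha e_\beta = \varepsilon_C(\alpha,\beta) e_{\alpha+\beta}$ together with the cocycle normalization $\varepsilon_C(\alpha,\beta)/\varepsilon_C(\beta,\alpha) = C(\alpha,\beta)$ of \eqref{eq:2.1}. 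The fourth, fifth and seventh identities — moving $z^{\sum\nu^p\gamma}$, $c^{\sum\nu^p\gamma}$, and $\gamma_{(0)}(0)$ past $e_\beta$ — all rest on the single observation that $e_\beta$ maps $T_{\mu_{(0)}}$ into $T_{(\mu+\beta)_{(0)}}$, so commuting $e_\beta$ through an operator that merely reads off the $\mathcal{H}_{(0)}$-weight shifts that weight by $\beta$; one then expands $\langle \sum\nu^p\gamma, \mu+\beta\rangle = \langle\sum\nu^p\gamma,\mu\rangle + \langle\sum\nu^p\gamma,\beta\rangle$ and uses $\sum\langle\nu^p\gamma,\beta\rangle = \langle\sum\nu^p\gamma,\beta\rangle$, the extra factor $m^{-1}$ in the seventh identity being the normalization built into the operator $\gamma_{(0)}(0)$.

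For the first identity I would start from the second compatibility relation of \eqref{eq:2.2}: for $b \in T_{\beta_{(0)}}$ we have $e_\alpha^{-1}\hat{\nu}(e_\alpha).b = \omega^{-\langle\sum\nu^p\alpha,\,\beta+\alpha/2\rangle} b$, hence $\hat{\nu}(e_\alpha).b = \omega^{-\langle\sum\nu^p\alpha,\beta\rangle}\,\omega^{-\langle\alpha,\sum\nu^p\alpha\rangle/2}\, e_\alpha.b$. Reading the first scalar factor as the action of the operator $\omega^{-\sum\nu^p\alpha}$ on $T_{\beta_{(0)}}$ and the second as a constant yields the claimed $\hat{\nu} e_\alpha = e_\alpha\,\omega^{-\sum\nu^p\alpha - \langle\alpha,\sum\nu^p\alpha\rangle/2}$. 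Here one should check that the half-power of $\omega$ makes sense, i.e.\ that $\langle\alpha,\sum\nu^p\alpha\rangle = \sum_p \langle\alpha,\nu^p\alpha\rangle \in 2\mathbb{Z}$: the term $p=0$ is even by (A2), the terms $p$ and $m-p$ together give $2\langle\alpha,\nu^p\alpha\rangle$ since $\nu$ is an isometry, and the middle term $p = m/2$ (when $m$ is even) is even by (A5).

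The two exponential identities are the standard generating-function manipulations. Since each Heisenberg bracket $[\gamma_{1,(n)}(n),\gamma_{2,(k)}(k)]$ is a scalar, conjugating $E^{\pm}(\gamma_2,z_2)$ past $\gamma_{1,(n)}(n)$ costs only the bracket of $\gamma_{1,(n)}(n)$ with the exponent of $E^{\pm}$; summing $z_1^{-n}$ times this over $n$, and plugging in the explicit value of $\langle\gamma_{1,(n)},\gamma_{2,(-n)}\rangle$ as a multiple of $\sum_{p\in\mathbb{Z}_m}\omega^{np}\langle\gamma_1,\nu^p\gamma_2\rangle$ (obtained by expanding both twisted components and collecting the cross terms by the residue $p$), one recognizes the geometric series $\sum_{\mp n>0}(\omega^p z_2/z_1)^n$ and obtains the eighth identity. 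For the ninth, write $E^+(\gamma_1,z_1)E^-(\gamma_2,z_2) = e^A e^B$ with $A$, $B$ the two exponents; because $[A,B]$ is central, $e^A e^B = e^B e^A e^{[A,B]}$, and the same bracket computation now yields $[A,B] = \sum_{p\in\mathbb{Z}_m}\langle\gamma_1,\nu^p\gamma_2\rangle\log(1-\omega^p z_2/z_1)$, the logarithmic series $-\sum_{n>0}x^n/n = \log(1-x)$ replacing the geometric one; exponentiating gives $\prod_p(1-\omega^p z_2/z_1)^{\langle\gamma_1,\nu^p\gamma_2\rangle}$. There is no genuine difficulty in any step; the only thing requiring care — and the closest thing to an obstacle — is keeping the various powers of $m$ straight, those coming from the normalization of $h_{(n)}$, from the $m^{-1}$ in the Heisenberg bracket, and from the $m$ in the exponents of $E^{\pm}$, so that they combine to exactly the constants displayed in the statement.
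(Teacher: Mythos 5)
Your verification is correct and is exactly the routine check the paper intends --- the paper itself offers no proof, simply asserting that these facts ``can be easily verified.'' The one point you rightly single out, keeping the powers of $m$ straight, is indeed where the only care is needed: the constants $m^{-1}$ in the seventh and eighth identities come out as stated only with the normalization $h_{(n)}=m^{-1}\sum_{p\in\mathbb Z_m}\omega^{-np}\nu^p(h)$ used for $a_{(n)}$ in Section 2 (the definition in Section 3 omits the $m^{-1}$, evidently a slip), and your bracket computations are consistent with that convention.
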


Before giving the twisted $\Gamma$-vertex operators, we define two
constants:
  \begin{equation*}\begin{split}
  \zeta(\alpha)&=\begin{cases}\zeta'(\alpha)2^{\la\alpha,\nu^{m/2}\alpha\ra/2},\ &\text{if}\ m\in 2\mathbb{Z},\\
  \zeta'(\alpha),\ &\text{if}\ m\in 2\mathbb{Z}+1, \end{cases}\\
  \kappa(\rho_ii,\rho_jj,c)&=\prod_{0\leq p<m}(1-c\omega^{p})^{-\la\rho_i\epsilon_i,\rho_j\nu^p\epsilon_j\ra}
  \prod_{0<p<m}(1-\omega^p)^{\la\rho_i\epsilon_i,\rho_j\nu^p\epsilon_j\ra},
   \end{split}\end{equation*}
  where $\alpha\in Q, \zeta'(\alpha)=\prod_{0<p<m/2}(1-\omega^{p})^{\la\alpha,\nu^p\alpha\ra},$
and $(\rho_ii,\rho_jj)\in \mathcal J$(see (\ref{eq:1.1})) $,c\in
\Gamma$ such that either $\rho_ii\ne \rho_jj$ or $\rho_ii=\rho_jj, c\ne 1$.

Now we define the twisted $\Gamma$-vertex operators
$Y_{\rho_ii,\rho_jj}(c,z)$ on $V_T$ by
\begin{equation*}\begin{aligned}
Y_{\rho_ii,\rho_jj}(c,z)=
\begin{cases}
\rho_i\epsilon_i(z),& \text{if }\rho_ii=\rho_jj, c=1,\\
m^{-1}\zeta(\rho_i\epsilon_i-\rho_j\epsilon_j)
\kappa(\rho_ii,\rho_jj,c)X_{\rho_ii,\rho_jj}(c,z),\
&\text{otherwise,}
\end{cases}\end{aligned}\end{equation*}
where $(\rho_ii,\rho_jj)\in \mathcal{J}$, $c\in \Gamma$, and
$X_{\rho_ii,\rho_jj}(c,z)$ is defined as follows
\begin{align*}
 X_{\rho_ii,\rho_jj}(c,z)=&e_{\rho_i\epsilon_i-\rho_j\epsilon_j}
E^-(\rho_i\epsilon_i,z)E^-(-\rho_j\epsilon_j,cz)E^+(\rho_i\epsilon_i,z)E^+(-\rho_j\epsilon_j,cz)\\
&\cdot z^{\sum \nu^p(\rho_i\epsilon_i-\rho_j\epsilon_j)+\sum
\la\rho_i\epsilon_i-\rho_j\epsilon_j,\nu^p(\rho_i\epsilon_i-\rho_j\epsilon_j)\ra/2}
c^{-\sum \nu^p\rho_j\epsilon_j+\sum
\la\epsilon_j,\nu^p\epsilon_j\ra/2}.
\end{align*}


 In what follows we describe the relations among the
twisted $\Gamma$-vertex operators $Y_{\rho_ii,\rho_jj}(c,z)$
defined above.
Let $\eta(r,\alpha)$ be the complex numbers
 determined by
\begin{align}\label{eq:c3}
\hat{\nu}^r(e_\alpha)=\eta(r,\alpha)e_{\nu^r(\alpha)},\ r\in
\mathbb Z_m, \alpha\in Q.
\end{align}
Recall the notation $i_r$ introduced in \eqref{eq:n2}. Then by definition we have $
\zeta(\nu^r(\alpha))=\zeta(\alpha)$ and $
\kappa(\rho_ii_r,\rho_jj_r,c)=\kappa(\rho_ii,\rho_jj,c)$.
It follows from this and the first three identities in Lemma \ref{lem:2.2} that
\begin{prop}\label{prop:2.3} For $(\rho_ii,\rho_jj)\in \mathcal{J}, c\in \Gamma$
and $r\in \mathbb{Z}_m$, we have
\begin{align}
&Y_{\rho_ii,\rho_jj}(c,z)=(-1)^{\delta_{ij}}Y_{-\rho_jj,-\rho_ii}(c^{-1},cz) \label{eq:R1},\\
&Y_{\rho_ii,\rho_jj}(c,\omega^{-r}z)=\eta(r,\rho_i\epsilon_i-\rho_j\epsilon_j)Y_{\rho_ii_r,\rho_jj_r}(c,z).\label{eq:R2}\qed
\end{align}
\end{prop}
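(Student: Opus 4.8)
The plan is to derive both identities directly from the explicit form of the vertex operators $Y_{\rho_ii,\rho_jj}(c,z)$, splitting into the degenerate case ($\rho_ii=\rho_jj$, $c=1$) and the generic case. In the degenerate case, $Y_{\rho_ii,\rho_jj}(c,z)=\rho_i\epsilon_i(z)$, and the claimed relations read $\rho_i\epsilon_i(z)=(-1)^0\,(-\rho_i\epsilon_i)(cz)$ with $c=1$ — which is immediate from $\alpha(z)=\sum_n\alpha_{(n)}(n)z^{-n}$ being linear in $\alpha$ — and $\rho_i\epsilon_i(\omega^{-r}z)=\eta(r,0)(\rho_i\epsilon_i)_{i_r}(z)$, which follows from $\nu^r\gamma(z)=\gamma(\omega^{-r}z)$ in Lemma \ref{lem:2.2} together with the normalization $\eta(r,0)=1$ (from $\hat\nu^r(e_0)=e_0$) and the bookkeeping $\nu^r(\rho_i\epsilon_i)=\rho_i\epsilon_{i_r}$ encoded in \eqref{eq:n2}. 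So the real content is the generic case.

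For the generic case I would first note, as the paper already records, that $\zeta(\nu^r\alpha)=\zeta(\alpha)$ and $\kappa(\rho_ii_r,\rho_jj_r,c)=\kappa(\rho_ii,\rho_jj,c)$, so the scalar prefactor $m^{-1}\zeta\kappa$ is inert under both operations and I only need to track $X_{\rho_ii,\rho_jj}(c,z)$. For \eqref{eq:R2}, I would substitute $\omega^{-r}z$ for $z$ in the definition of $X_{\rho_ii,\rho_jj}(c,z)$ and use $E^\pm(\gamma,\omega^{-r}z)=E^\pm(\nu^r\gamma,z)$ from Lemma \ref{lem:2.2} on each of the four exponential factors, turning $E^\pm(\pm\rho_i\epsilon_i,\omega^{-r}z)$ into $E^\pm(\pm\rho_i\epsilon_{i_r},z)$ and similarly for the $j$-factors (here the definition of $i_r$ from \eqref{eq:n2} is exactly what makes $\nu^r\epsilon_i=\pm\epsilon_{\sigma^r(i)}$ collapse into the index $i_r$). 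The $z$-power operators transform by $(\omega^{-r}z)^{\sum\nu^p\mu}=z^{\sum\nu^p\mu}$ since $\omega^{m}=1$ and the exponent lives in $\sum_p\nu^p\mu$ which is $\nu$-invariant — more carefully, $\omega^{-r\,\la\sum\nu^p\mu,\,\text{wt}\ra}=1$ because $\hat\nu$ acts trivially on $z^{\sum\nu^p\mu}$, a fact one can extract from the first identity of Lemma \ref{lem:2.2}; and the $c$-power factor is untouched. Finally the group-algebra element $e_{\rho_i\epsilon_i-\rho_j\epsilon_j}$ must be replaced by $e_{\nu^r(\rho_i\epsilon_i-\rho_j\epsilon_j)}=\eta(r,\rho_i\epsilon_i-\rho_j\epsilon_j)^{-1}\hat\nu^r(e_{\rho_i\epsilon_i-\rho_j\epsilon_j})$ — no, rather one compares $X_{\rho_ii,\rho_jj}(c,\omega^{-r}z)$ assembled from the transformed pieces with $X_{\rho_ii_r,\rho_jj_r}(c,z)$, and the only discrepancy is the group-algebra generator: $e_{\rho_i\epsilon_i-\rho_j\epsilon_j}$ versus $e_{\rho_i\epsilon_{i_r}-\rho_j\epsilon_{j_r}}=e_{\nu^r(\rho_i\epsilon_i-\rho_j\epsilon_j)}$, and these differ precisely by the scalar $\eta(r,\rho_i\epsilon_i-\rho_j\epsilon_j)$ via \eqref{eq:c3}. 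This yields \eqref{eq:R2}.

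For \eqref{eq:R1} I would compare $X_{\rho_ii,\rho_jj}(c,z)$ with $X_{-\rho_jj,-\rho_ii}(c^{-1},cz)$: writing out the latter, its four exponential factors are $E^-(-\rho_j\epsilon_j,cz)E^-(\rho_i\epsilon_i,c^{-1}\cdot cz)E^+(-\rho_j\epsilon_j,cz)E^+(\rho_i\epsilon_i,z)$, i.e. exactly the same four factors as in $X_{\rho_ii,\rho_jj}(c,z)$ but with the two $E^-$'s (resp. two $E^+$'s) in swapped order; since the $E^-$ operators commute among themselves and likewise the $E^+$'s (they involve only creation, resp. only annihilation, operators of a Heisenberg algebra with $[\cdot,\cdot]$ scalar), these agree. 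The group-algebra generators match since $e_{-\rho_j\epsilon_j-(-\rho_i\epsilon_i)}=e_{\rho_i\epsilon_i-\rho_j\epsilon_j}$. The remaining check is that the $z$-power and $c$-power operators coincide: substituting $cz$ for $z$ and $c^{-1}$ for $c$ in the exponent data of $X_{-\rho_jj,-\rho_ii}$ and matching against $X_{\rho_ii,\rho_jj}(c,z)$, one has to verify an identity among the exponents $\sum\nu^p\mu$, $\sum\la\mu,\nu^p\mu\ra/2$ and $\sum\la\epsilon_i,\nu^p\epsilon_i\ra/2$, $\sum\la\epsilon_j,\nu^p\epsilon_j\ra/2$; the cross terms $c^{\sum\la\mu,\text{wt}\ra}$ produced by moving powers of $c$ past $e_{\rho_i\epsilon_i-\rho_j\epsilon_j}$ (via Lemma \ref{lem:2.2}) reorganize into the stated $(-1)^{\delta_{ij}}$, using $\la\epsilon_i,\epsilon_i\ra=1$ and the $\nu$-invariance of the form. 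I expect this last reconciliation — carefully matching all the $z$- and $c$-power exponents and producing the sign $(-1)^{\delta_{ij}}$ — to be the main obstacle, since it is a somewhat delicate bilinear-form bookkeeping; everything else is a mechanical application of Lemma \ref{lem:2.2} and the definitions \eqref{eq:n2}, \eqref{eq:c3}.
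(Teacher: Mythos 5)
Your overall strategy (unwinding the definitions of $Y$ and $X$ and invoking Lemma \ref{lem:2.2}) is the same one the paper relies on, but two of your intermediate claims are false, and the one for \eqref{eq:R1} would derail the argument. The identity $\kappa(\rho_ii_r,\rho_jj_r,c)=\kappa(\rho_ii,\rho_jj,c)$ that you quote only expresses $\nu$-invariance, i.e.\ it is relevant to \eqref{eq:R2}; the prefactor is \emph{not} inert under the substitution $(\rho_ii,\rho_jj,c)\mapsto(-\rho_jj,-\rho_ii,c^{-1})$. Writing $a_p=\la\rho_i\epsilon_i,\rho_j\nu^p\epsilon_j\ra$ and using $1-c^{-1}\omega^{-p}=-c^{-1}\omega^{-p}(1-c\omega^{p})$ and $1-\omega^{-p}=-\omega^{-p}(1-\omega^{p})$, one finds
\begin{equation*}
\kappa(-\rho_jj,-\rho_ii,c^{-1})=(-1)^{a_0}c^{\sum_p a_p}\,\kappa(\rho_ii,\rho_jj,c)
=(-1)^{\delta_{ij}}c^{\la\rho_i\epsilon_i,\rho_j\sum\nu^p\epsilon_j\ra}\,\kappa(\rho_ii,\rho_jj,c),
\end{equation*}
while the comparison of the $X$-operators (where your treatment of the $E^{\pm}$ factors and of the group-algebra element is correct) yields only $X_{-\rho_jj,-\rho_ii}(c^{-1},cz)=c^{-\la\rho_i\epsilon_i,\rho_j\sum\nu^p\epsilon_j\ra}X_{\rho_ii,\rho_jj}(c,z)$. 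The sign $(-1)^{\delta_{ij}}$ you expect to extract from the $z$- and $c$-power bookkeeping of $X$ is simply not there: it lives entirely in the $\kappa$ ratio, and the two $c$-powers cancel against each other. If the prefactor really were unchanged, you would obtain $Y_{-\rho_jj,-\rho_ii}(c^{-1},cz)=c^{-\la\rho_i\epsilon_i,\rho_j\sum\nu^p\epsilon_j\ra}Y_{\rho_ii,\rho_jj}(c,z)$, which is wrong. (A smaller slip of the same kind: in the degenerate case of \eqref{eq:R1} the exponent of $-1$ is $\delta_{ii}=1$, not $0$; with $(-1)^0$ your check reads $\rho_i\epsilon_i(z)=-\rho_i\epsilon_i(z)$.)

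For \eqref{eq:R2} your conclusion is right but the mechanism is not. The operator $\omega^{-r(\sum\nu^p\underline{\alpha}+\la\underline{\alpha},\sum\nu^p\underline{\alpha}\ra/2)}$ produced by substituting $\omega^{-r}z$ into the $z$-power is not the identity in general (for $Q(D_N)$, $\nu=\nu_d$, $m=2$ and $\underline{\alpha}=\epsilon_i-\epsilon_N$ the scalar part $\la\underline{\alpha},\sum\nu^p\underline{\alpha}\ra/2$ equals $1$, giving a factor $(-1)^r$), and $e_{\underline{\alpha}}$ does not differ from $e_{\nu^r\underline{\alpha}}$ by the scalar $\eta(r,\underline{\alpha})$: relation \eqref{eq:c3} compares $\hat{\nu}^r(e_{\underline{\alpha}})$ with $e_{\nu^r\underline{\alpha}}$, not $e_{\underline{\alpha}}$ with $e_{\nu^r\underline{\alpha}}$. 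The correct bookkeeping is to combine the leftover $\omega$-power with the group-algebra element: iterating the first identity of Lemma \ref{lem:2.2} gives $e_{\underline{\alpha}}\,\omega^{-r(\sum\nu^p\underline{\alpha}+\la\underline{\alpha},\sum\nu^p\underline{\alpha}\ra/2)}=\hat{\nu}^r(e_{\underline{\alpha}})=\eta(r,\underline{\alpha})e_{\nu^r\underline{\alpha}}$, which at once produces the scalar $\eta(r,\underline{\alpha})$ and converts the generator into the one occurring in $X_{\rho_ii_r,\rho_jj_r}(c,z)$. Your two false steps happen to compensate, but neither is justified as written.
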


\section{An identity}\label{sec 3}
In this section we prove an identity (see \eqref{eq:3.15}) which will be used in the next section to calculate the commutator relations for the twisted $\Gamma$-vertex operators. As we have mentioned that this identity is a nontrivial generalization of the combinatorial identity given by Lepowsky in [L]. Throughout this section, the notations $s$ and $t_i, i=1,2,\cdots$ denote some  distinct nonzero complex numbers.

First we recall some well-known identities:

\begin{align}
 \sum_{i=1}^n\left(\prod_{j\not=i}^n\frac
 {t_i}{t_i-t_j}\right)\frac {s} {s-t_i}=\prod^n_{i=1}\frac {s} {s-t_i},\ \sum_{i=1}^n\left(\prod_{j\not=i}^n\frac
 {t_j}{t_i-t_j}\right)\frac {t_i} {s-t_i}=
 \prod^n_{i=1}\frac {t_i}{s-t_i},\label{eq:3.1}
 \end{align}
\begin{align}
 \sum_{i=1}^n&\left(\prod_{j\not=i}^n \frac
 {t_i}{t_i-t_j}\right)\left(\frac {s} {s-t_i}\right)^2
 =\left(1+\sum_{j=1}^n\frac{t_j}{s-t_j}\right)\prod^n_{i=1}\frac {s} {s-t_i}.\label{eq:3.3}\\
  \sum_{i=1}^n&\left(\prod_{j\not=i}^n \frac{t_j}{t_i-t_j}\right)\left(\frac {t_i} {s-t_i}\right)^2=
\left(\sum_{j=1}^n\frac{s}{s-t_j}-1\right) \prod^n_{i=1}\frac {t_i}{s-t_i}.\label{eq:3.4}
\end{align}

Now we use these identities to prove the following result:

\begin{lem}\label{lem:3.2} Let $a_i=1$ or $2$ for $i\in I:=\{1,\cdots, n\}$.
 Set $I(t)=\{i\in I|a_i=t\}$ for $t=1$ or $2$. Then we have
 \begin{equation}\begin{split}\label{eq:3.5}
 &\prod_{i=1}^n\left(\frac{s}{s-t_i}\right)^{a_i}
 =\sum_{i\in I(1)}\left(\prod_{j\in I,j\neq i}\left(\frac{t_i}{t_i-t_j}\right)^{a_j}\right)\frac{s}{s-t_i}\\
 &+\sum_{i\in I(2)}\left(\prod_{j\in I,j\ne i}\left(\frac{t_i}{t_i-t_j}\right)^{a_j}\right)
 \left[\frac{st_i}{(s-t_i)^2}+
 \left(1+\sum_{j\in I,j\ne i} \frac{a_jt_j}{t_j-t_i}\right)\frac{s}{s-t_i}\right],\end{split}\end{equation}
\begin{equation}\begin{split} \label{eq:3.6} &\prod_{i=1}^n\left(\frac{t_i}{s-t_i}\right)^{a_i}
 =\sum_{i\in I(1)}\left(\prod_{j\in I,j\neq i}\left(\frac{t_j}{t_i-t_j}\right)^{a_j}\right)\frac{t_i}{s-t_i}\\
 &+\sum_{i\in I(2)}\left(\prod_{j\in I,j\ne i}\left(\frac{t_j}{t_i-t_j}\right)^{a_j}\right)
 \left[\frac{st_i}{(s-t_i)^2}+
 \left(\sum_{j\in I,j\ne i} \frac{a_jt_i}{t_j-t_i}-1\right)\frac{t_i}{s-t_i}\right].
 \end{split}\end{equation}
 In particular, let $z$ be a formal variable, one has 
\begin{equation}\begin{split}\label{eq:3.7}
 &\prod_{i=1}^n\left(\frac{1}{1-t_iz}\right)^{a_i}
 =\sum_{i\in I(1)}\left(\prod_{j\in J,j\neq i}\left(\frac{t_i}{t_i-t_j}\right)^{a_j}\right)\frac{1}{1-t_iz}\\
 &+\sum_{i\in I(2)}\left(\prod_{j\in J,j\ne i}\left(\frac{t_i}{t_i-t_j}\right)^{a_j}\right)
 \left[\frac{zt_i}{(1-t_iz)^2}+
 \left(1+\sum_{j\in J,j\ne i} \frac{a_jt_j}{t_j-t_i}\right)\frac{1}{1-t_iz}\right],
 \end{split}\end{equation}
 \begin{equation}\begin{split}\label{eq:3.8}
 &\prod_{i=1}^n\left(\frac{t_iz}{1-t_iz}\right)^{a_i}
 =\sum_{i\in I(1)}\left(\prod_{j\in I,j\neq i}\left(\frac{t_j}{t_i-t_j}\right)^{a_j}\right)\frac{t_iz}{1-t_iz}\\
 &+\sum_{i\in I(2)}\left(\prod_{j\in I,j\ne i}\left(\frac{t_j}{t_i-t_j}\right)^{a_j}\right)
 \left[\frac{t_iz}{(1-t_iz)^2}+
 \left(\sum_{j\in I,j\ne i} \frac{a_jt_i}{t_j-t_i}-1\right)\frac{t_iz}{1-t_iz}\right].
 \end{split}\end{equation}
 \end{lem}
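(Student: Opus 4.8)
The four identities \eqref{eq:3.5}--\eqref{eq:3.8} are equivalent: \eqref{eq:3.7} and \eqref{eq:3.8} follow from \eqref{eq:3.5} and \eqref{eq:3.6} by the substitution $s=z^{-1}$, under which $s/(s-t_i)=1/(1-t_iz)$, $t_i/(s-t_i)=t_iz/(1-t_iz)$ and $st_i/(s-t_i)^2=t_iz/(1-t_iz)^2$ while $t_i/(t_i-t_j)$ and $t_j/(t_i-t_j)$ are unchanged (the resulting equalities being read off coefficientwise as formal power series in $z$). So the real content is the two rational-function identities \eqref{eq:3.5} and \eqref{eq:3.6}, which I would obtain from \eqref{eq:3.1} by a confluence (coincidence-of-variables) argument.

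For \eqref{eq:3.5}: for each $i\in I(2)$ introduce a fresh variable $u_i$ and apply the first identity in \eqref{eq:3.1} to the $n+|I(2)|$ pairwise distinct numbers $\{t_j\}_{j\in I(1)}\cup\{t_i,u_i\}_{i\in I(2)}$, which writes $\prod_{j\in I(1)}\tfrac{s}{s-t_j}\prod_{i\in I(2)}\tfrac{s}{s-t_i}\tfrac{s}{s-u_i}$ as a sum of terms indexed by this enlarged family. Now let $u_i\to t_i$ for all $i\in I(2)$. The left side tends to $\prod_{i=1}^n(\tfrac{s}{s-t_i})^{a_i}$. On the right, the term attached to a $t_j$ with $j\in I(1)$ converges to exactly the $j$-summand of the first sum of \eqref{eq:3.5}. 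The pair of terms attached to $t_i$ and $u_i$ (for $i\in I(2)$) each blow up like $(t_i-u_i)^{-1}$, but their sum equals $\bigl(\Lambda_i(t_i)-\Lambda_i(u_i)\bigr)/(t_i-u_i)$, where (after collecting the remaining factors and letting $u_{i'}\to t_{i'}$ for $i'\ne i$, which is harmless since $t_i\ne t_{i'}$)
\[
\Lambda_i(u)=u\prod_{j\in I(1)}\frac{u}{u-t_j}\prod_{i'\in I(2),\,i'\ne i}\Bigl(\frac{u}{u-t_{i'}}\Bigr)^{2}\cdot\frac{s}{s-u};
\]
hence the pair tends to $\Lambda_i'(t_i)$. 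Using $\Lambda_i(t_i)=t_i\prod_{j\ne i}(\tfrac{t_i}{t_i-t_j})^{a_j}\tfrac{s}{s-t_i}$ and logarithmic differentiation, one gets
\[
\Lambda_i'(t_i)=\prod_{j\ne i}\Bigl(\frac{t_i}{t_i-t_j}\Bigr)^{a_j}\Bigl[\frac{st_i}{(s-t_i)^2}+\Bigl(1+\sum_{j\ne i}a_j-\sum_{j\ne i}\frac{a_jt_i}{t_i-t_j}\Bigr)\frac{s}{s-t_i}\Bigr],
\]
and the elementary identity $\sum_{j\ne i}a_j-\sum_{j\ne i}\tfrac{a_jt_i}{t_i-t_j}=\sum_{j\ne i}a_j\bigl(1-\tfrac{t_i}{t_i-t_j}\bigr)=\sum_{j\ne i}\tfrac{a_jt_j}{t_j-t_i}$ turns this into precisely the $i$-summand of the second sum of \eqref{eq:3.5}. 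Adding the limits of all the terms yields \eqref{eq:3.5}. The same confluence applied to the \emph{second} identity in \eqref{eq:3.1} gives \eqref{eq:3.6}; there the ``other-index'' numerators make each $I(2)$-pair converge instead to $t_i\Phi_i'(t_i)-\Phi_i(t_i)$ for the analogous function $\Phi_i$, producing the ``$\sum_{j\ne i}\tfrac{a_jt_i}{t_j-t_i}-1$'' coefficient in \eqref{eq:3.6}.

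The step needing the most care is the confluence: one must check that the apparent singularities genuinely cancel and that the iterated limits are legitimate — this is clear once one observes that within each $i$-pair the only blowing-up factor is $1/(t_i-u_i)$, so limits in distinct $u_i$ are independent and may be taken successively — and then push the logarithmic-differentiation computation through, recognizing the coefficient $1+\sum_{j\ne i}\tfrac{a_jt_j}{t_j-t_i}$ in the stated shape. Everything else is bookkeeping. (Alternatively one can prove \eqref{eq:3.5} and \eqref{eq:3.6} directly by partial fractions in $s$: both sides are rational with poles only at the $t_i$ of order $a_i$; matching the order-$1$ and order-$2$ Laurent coefficients there reduces to \eqref{eq:3.1} and \eqref{eq:3.3}--\eqref{eq:3.4} plus the same logarithmic-derivative calculation, and the leftover constant is pinned down by evaluating at $s=0$, where both sides vanish. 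The identities \eqref{eq:3.3} and \eqref{eq:3.4} are in turn just \eqref{eq:3.1} combined with its $s$-derivative.)
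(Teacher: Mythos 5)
Your argument is correct, but it takes a genuinely different route from the paper's. The paper proves \eqref{eq:3.5} purely algebraically: it writes $\prod_i(s/(s-t_i))^{a_i}$ as the product of the two sums coming from applying the first identity of \eqref{eq:3.1} once to all of $I$ and once to $I(2)$, expands that product into four groups $G_1,\dots,G_4$, and simplifies each group using \eqref{eq:3.1} together with the auxiliary identities \eqref{eq:3.3} and \eqref{eq:3.4}; the second-order pole terms come from the diagonal group $G_4$ via \eqref{eq:3.3}. Your confluence argument instead applies \eqref{eq:3.1} once to an enlarged family of $n+|I(2)|$ simple points and lets the duplicated points collide, so the double-pole contributions arise as derivatives $\Lambda_i'(t_i)$ (resp. $t_i\Phi_i'(t_i)-\Phi_i(t_i)$ for \eqref{eq:3.6}); I checked the logarithmic-differentiation computation and the identity $\sum_{j\ne i}a_j-\sum_{j\ne i}\tfrac{a_jt_i}{t_i-t_j}=\sum_{j\ne i}\tfrac{a_jt_j}{t_j-t_i}$, and they do land exactly on the stated coefficients. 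What your approach buys is that \eqref{eq:3.3} and \eqref{eq:3.4} are never needed (their content is absorbed into the derivative), and it explains conceptually where the $st_i/(s-t_i)^2$ terms come from; what it costs is the need to justify the iterated limits, which you do address (the only singular factor in the $i$-th pair is $1/(t_i-u_i)$, so the limits decouple). The paper's route stays entirely inside rational-function algebra with no limiting process. The deduction of \eqref{eq:3.7}--\eqref{eq:3.8} via $s=z^{-1}$ matches what the paper does implicitly.
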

\begin{proof}  For the identity \eqref{eq:3.5}, by applying the first identity in (\ref{eq:3.1}), we have
 \begin{align*}
 \prod_{i=1}^n\left(\frac{s}{s-t_i}\right)^{a_i}
 =\left(\sum_{i\in I}D_i\frac{s}{s-t_i}\right)\left(\sum_{k\in I(2)}E_k\frac{s}{s-t_k}\right)
 =G_1+G_2+G_3+G_4,
 \end{align*}
 where $D_i=\prod_{j\in I,j\ne i}\frac{t_i}{t_i-t_j}$,
 $E_k=\prod_{j\in I(2),j\ne k}\frac{t_k}{t_k-t_j},$ and
 \begin{align*}
 G_1&=\sum_{i\in I(1),k\in I(2)}D_iE_k \frac{t_i}{t_i-t_k}\frac{s}{s-t_i},\
 G_2=\sum_{i,k\in I(2),i\ne k}D_iE_k \frac{t_i}{t_i-t_k}\frac{s}{s-t_i},\\
 G_3&=\sum_{i\in I,k\in I(2),i\ne k} D_iE_k \frac{t_k}{t_k-t_i}\frac{s}{s-t_k},\
 G_4=\sum_{i\in I(2)}D_iE_i\left(\frac{s}{s-t_i}\right)^2.\end{align*}

By using the first identity in (\ref{eq:3.1}) and identity (\ref{eq:3.3}), we get
\begin{align*}G_1=&\sum_{i\in I(1)}D_i\left(\prod_{k\in I(2)}\frac{t_i}{t_k-t_i}\right)
 \frac{s}{s-t_i}
 =\sum_{i\in I(1)}D_iE_i\frac{s}{s-t_i},\\
 G_2=&\sum_{i\in I(2)}D_i\left(
 \sum_{k\in I(2),k\ne i}\left(\prod_{j\in I(2),j\ne i,k}\frac{t_k}{t_k-t_j}\right)\frac{t_k}{t_k-t_i}
 \frac{t_i}{t_i-t_k}\right)\frac{s}{s-t_i}\\
 =&\sum_{i\in I(2)}D_i\left[
 \sum_{k\in I(2),k\ne i}\left(\prod_{j\in I(2),j\ne i,k}\frac{t_k}{t_k-t_j}\right)\left(
 \frac{t_i}{t_i-t_k}-\left(\frac{t_i}{t_i-t_k}\right)^2\right)\right]\frac{s}{s-t_i}\\
 =&\sum_{i\in I(2)}D_iE_i\left(\sum_{k\in I(2),k\ne i}
 \frac{t_k}{t_k-t_i}\right)\frac{s}{s-t_i},\\
 G_3=&\sum_{i\in I(2)}E_i\left(\sum_{k\in I,k\ne i}
\prod_{j\in I,j\ne k}\left(\frac{t_k}{t_k-t_j}\right)\left(
\frac{A_k}{t_k-t_i}\frac{t_i}{t_i-t_k}\right)\right)\frac{s}{s-t_i}\\
 =&\sum_{i\in I(2)}E_i\left[\sum_{k\in I,k\ne i} \left(\prod_{j\in
I,j\ne k}\frac{t_k}{t_k-t_j}\right)\left(
 \frac{t_i}{t_i-t_k}-\left(\frac{t_i}{t_i-t_k}\right)^2
\right)\right]\frac{s}{s-t_i}\\
=&\sum_{i\in I(2)}D_iE_i\left(\sum_{j\in I,j\ne
i}\frac{t_j}{t_j-t_i}\right)\frac{s}{s-t_i}.
\end{align*}

It is clear that the expression of $G_1+G_2+G_3+G_4$ coincides with the right hand-side of \eqref{eq:3.5},
 as desired. The identity \eqref{eq:3.6} can be proved similarly by using the second identity in (\ref{eq:3.1}) and identity (\ref{eq:3.4}), which is omitted.
\end{proof}

According to the formal power identities \eqref{eq:3.7} and \eqref{eq:3.8}, we have

\begin{lem}\label{lem:3.4} Let $I,a_i,i\in I, I(t)$ be the same as in Lemma \ref{lem:3.2}.
Then
\begin{align*}
&\prod_{i=1}^n\left(\frac{1}{1-t_iz}\right)^{a_i}-\prod_{i=1}^n
\left(\frac{-t_i^{-1}z^{-1}}{1-t_i^{-1}z^{-1}}\right)^{a_i}=
\sum_{i\in I(1)}\prod_{j\in I,j\ne i}\left(\frac{t_i}{t_i-t_j}\right)^{a_j}\delta(t_iz)\\&
+\sum_{i\in I(2)}\prod_{j\in I,j\ne
i}\left(\frac{t_i}{t_i-t_j}\right)^{a_j}
\left[(D\delta)(t_iz)+\left(1+\sum_{j\in I,j\ne
i}\frac{a_jt_j}{t_j-t_i}\right)\delta(t_iz)\right].
\end{align*}
\end{lem}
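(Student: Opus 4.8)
The plan is to derive the identity from the two formal power series identities \eqref{eq:3.7} and \eqref{eq:3.8} together with two elementary facts about the formal delta function. First I would record the delta facts
\[
\frac{1}{1-t_iz}-\frac{-t_i^{-1}z^{-1}}{1-t_i^{-1}z^{-1}}=\delta(t_iz),\qquad
\Big(\frac{1}{1-t_iz}\Big)^{2}-\Big(\frac{-t_i^{-1}z^{-1}}{1-t_i^{-1}z^{-1}}\Big)^{2}=(D\delta)(t_iz)+\delta(t_iz),
\]
which follow by expanding $\tfrac{1}{1-w}$ and $\tfrac{1}{(1-w)^{2}}$ in nonnegative powers and $\tfrac{-w^{-1}}{1-w^{-1}}$ in negative powers of $w=t_iz$; in the second identity the coefficient of $(t_iz)^{-1}$ on the right is $-1+1=0$, which is why there is no correction term. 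I would also record the trivial rewriting $\tfrac{zt_i}{(1-t_iz)^{2}}+\tfrac{1}{1-t_iz}=\tfrac{1}{(1-t_iz)^{2}}$, so that the bracket attached to each $i\in I(2)$ in \eqref{eq:3.7} takes the form $\big(\tfrac{1}{1-t_iz}\big)^{2}+\big(\sum_{j\ne i}\tfrac{a_jt_j}{t_j-t_i}\big)\tfrac{1}{1-t_iz}$; in this shape the two delta facts apply summand by summand.

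Next I would produce the matching expansion of $\prod_{i=1}^{n}\big(\tfrac{-t_i^{-1}z^{-1}}{1-t_i^{-1}z^{-1}}\big)^{a_i}$ out of \eqref{eq:3.8}. Substituting $z\mapsto z^{-1}$ and $t_i\mapsto t_i^{-1}$ in \eqref{eq:3.8} is legitimate because the $t_i$ are distinct nonzero complex numbers and both sides of \eqref{eq:3.8} lie in $z\,\mathbb{C}[[z]]$. Using $\tfrac{t_j^{-1}}{t_i^{-1}-t_j^{-1}}=-\tfrac{t_i}{t_i-t_j}$, $\tfrac{a_jt_i^{-1}}{t_j^{-1}-t_i^{-1}}=-\tfrac{a_jt_j}{t_j-t_i}$, and $\big(\tfrac{-t_i^{-1}z^{-1}}{1-t_i^{-1}z^{-1}}\big)^{a_i}=(-1)^{a_i}\big(\tfrac{t_i^{-1}z^{-1}}{1-t_i^{-1}z^{-1}}\big)^{a_i}$, the overall sign $(-1)^{\sum_i a_i}$ and the sign $(-1)^{\sum_{j\ne i}a_j}$ produced inside the $i$-th coefficient combine to $(-1)^{a_i}$; hence $i\in I(2)$ contributes with sign $+1$ and $i\in I(1)$ with sign $-1$, the latter merely turning $\tfrac{t_i^{-1}z^{-1}}{1-t_i^{-1}z^{-1}}$ into $\tfrac{-t_i^{-1}z^{-1}}{1-t_i^{-1}z^{-1}}$. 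After the further rewriting $\tfrac{t_i^{-1}z^{-1}}{(1-t_i^{-1}z^{-1})^{2}}=\big(\tfrac{-t_i^{-1}z^{-1}}{1-t_i^{-1}z^{-1}}\big)^{2}+\tfrac{t_i^{-1}z^{-1}}{1-t_i^{-1}z^{-1}}$ one obtains an expansion of $\prod_i\big(\tfrac{-t_i^{-1}z^{-1}}{1-t_i^{-1}z^{-1}}\big)^{a_i}$ carrying exactly the same scalar coefficients $\prod_{j\ne i}(t_i/(t_i-t_j))^{a_j}$, and the same shape, as the rewritten \eqref{eq:3.7}, with $\tfrac{-t_i^{-1}z^{-1}}{1-t_i^{-1}z^{-1}}$ and its square in place of $\tfrac{1}{1-t_iz}$ and its square.

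Finally I would subtract this expansion from the rewritten \eqref{eq:3.7} coefficientwise. The scalar prefactors cancel, each $i\in I(1)$ contributes $\prod_{j\ne i}(t_i/(t_i-t_j))^{a_j}\,\delta(t_iz)$ by the first delta fact, and each $i\in I(2)$ contributes $\prod_{j\ne i}(t_i/(t_i-t_j))^{a_j}\big[(D\delta)(t_iz)+\delta(t_iz)+\big(\sum_{j\ne i}\tfrac{a_jt_j}{t_j-t_i}\big)\delta(t_iz)\big]$ by both delta facts, which is exactly the claimed identity. The one genuinely delicate point is the sign bookkeeping in the middle step; a secondary caution is that one must never conflate the two distinct formal expansions $\tfrac{1}{1-t_iz}\in\mathbb{C}[[z]]$ and $\tfrac{-t_i^{-1}z^{-1}}{1-t_i^{-1}z^{-1}}\in\mathbb{C}[[z^{-1}]]$ of the same rational function. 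Conceptually, Lemma \ref{lem:3.4} just records that subtracting these two expansions of the single rational function $\prod_i\big(\tfrac{s}{s-t_i}\big)^{a_i}$, with $s=z^{-1}$, turns the partial fraction identity \eqref{eq:3.5} into a delta-function identity.
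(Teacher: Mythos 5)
Your proof is correct and follows essentially the same route as the paper: expand both products via \eqref{eq:3.7} and \eqref{eq:3.8} (the latter under $z\mapsto z^{-1}$, $t_i\mapsto t_i^{-1}$) and then invoke the formal identities $\frac{1}{1-w}+\frac{w^{-1}}{1-w^{-1}}=\delta(w)$ and $\frac{w}{(1-w)^2}-\frac{w^{-1}}{(1-w^{-1})^2}=(D\delta)(w)$, which your two ``delta facts'' repackage after absorbing one copy of $\frac{1}{1-t_iz}$ into the square. The only difference is that you make explicit the sign bookkeeping $(-1)^{\sum a_i}\cdot(-1)^{\sum_{j\ne i}a_j}=(-1)^{a_i}$ in passing from \eqref{eq:3.8} to the expansion of $\prod_i\bigl(\tfrac{-t_i^{-1}z^{-1}}{1-t_i^{-1}z^{-1}}\bigr)^{a_i}$, a step the paper leaves implicit.
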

\begin{proof}
From the formal power identities  \eqref{eq:3.7} and \eqref{eq:3.8} , we know
 \begin{align*}
 &\prod_{i=1}^n\left(\frac{1}{1-t_iz}\right)^{a_i}-\prod_{i=1}^n
\left(\frac{-t_i^{-1}z^{-1}}{1-t_i^{-1}z^{-1}}\right)^{a_i}\\
=&\sum_{i\in I(1)}\prod_{j\in I,j\ne
i}\left(\frac{t_i}{t_i-t_j}\right)^{a_j}
\left(\frac{1}{1-t_iz}+\frac{t_i^{-1}z^{-1}}{1-t_i^{-1}z^{-1}}\right)\\
& +\sum_{i\in I(2)}\prod_{j\in I,j\ne
i}\left(\frac{t_i}{t_i-t_j}\right)^{a_j}
\left(\frac{t_iz}{(1-t_iz)^2}-\frac{t_i^{-1}z^{-1}}{(1-t_i^{-1}z^{-1})^2}\right)\\
&+\sum_{i\in I(2)}\prod_{j\in I,j\ne i}\left(\frac{t_i}{t_i-t_j}\right)^{a_j}
\left(\sum_{j\in I,j\ne i}\frac{a_jt_j}{t_j-t_i}+1\right)
\left(\frac{1}{1-t_iz}+\frac{t_i^{-1}z^{-1}}{1-t_i^{-1}z^{-1}}\right)\\
 =&\sum_{i\in I(1)}\prod_{j\in I,j\ne
i}\left(\frac{t_i}{t_i-t_j}\right)^{a_j} \delta(t_iz)\\
&+\sum_{i\in I(2)}\prod_{j\in I,j\ne i}\left(\frac{t_i}{t_i-t_j}\right)^{a_j}
\left[(D\delta)(t_iz)+\left(\sum_{j\in I,j\ne
i}\frac{a_jt_j}{t_j-t_i} +1\right)\delta(t_iz)\right]
\end{align*}
as desired. \end{proof}

Let $f(z)\in \mathbb{C}[z,z^{-1}]$ and $a\in \mathbb{C}^*$. The
following formal power series identities (cf. \cite{FLM}) are well-known.
\begin{align} f(z)\delta(az)&=f(a^{-1})\delta(az),\label{eq:3.13}\\
f(z)(D\delta)(az)&=f(a^{-1})(D\delta)(az)-(D_zf)(a^{-1})\delta(az),\label{eq:3.14}
\end{align}
where $D_z=z\frac{d}{dz}$. Now we state and prove our main result in this section.
\begin{prop}\label{prop:3.6} Let $t_i,i\in I$ be distinct nonzero complex numbers
and $I=\{1,\cdots,n\}$. Let $a_i\in \mathbb{Z}$ and $a_i\geq -2,$
 $i\in I$. For $t\in \mathbb{Z}$ and $t\geq -2$, set $I(t)=
\{i\in I|a_i=t\}$. Then we have
\begin{equation}\label{eq:3.15}\begin{split}
&\prod_{i\in I}(1-t_iz)^{a_i}-\prod_{i\in I}(1-t_i^{-1}z^{-1})^{a_i}(-t_iz)^{a_i}
=\sum_{i\in I(-1)}\prod_{j\in I,j\ne
i}(1-t_jt_i^{-1})^{a_j}\delta(t_iz)\\&+
\sum_{i\in I(-2)}\prod_{j\in I,j\ne i}(1-t_jt_i^{-1})^{a_j} [(D\delta)(t_iz)+(1+\sum_{j\in I,j\ne
i}a_j(t_it_j^{-1}-1)^{-1})\delta(t_iz)].
\end{split}
\end{equation}
\end{prop}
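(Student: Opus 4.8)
The plan is to reduce \eqref{eq:3.15} to Lemma \ref{lem:3.4} by peeling off the factors with nonnegative exponents. Write $I_+=\{i\in I\mid a_i\ge 0\}$ and $I_-=I(-1)\cup I(-2)=I\setminus I_+$, and set $P(z)=\prod_{i\in I_+}(1-t_iz)^{a_i}$, which is an ordinary polynomial. The elementary identity $(1-t_i^{-1}z^{-1})(-t_iz)=1-t_iz$ shows that for each $i\in I_+$ the $i$-th factor of the second product on the left of \eqref{eq:3.15} is again $(1-t_iz)^{a_i}$, so $P(z)$ is a common factor of both products there. For $i\in I_-$ put $c_i:=-a_i\in\{1,2\}$; then $(1-t_iz)^{a_i}=(1/(1-t_iz))^{c_i}$ and $(1-t_i^{-1}z^{-1})^{a_i}(-t_iz)^{a_i}=(-t_i^{-1}z^{-1}/(1-t_i^{-1}z^{-1}))^{c_i}$. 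Hence the left-hand side of \eqref{eq:3.15} equals $P(z)$ times the left-hand side of the identity in Lemma \ref{lem:3.4} taken with index set $I_-$ and exponents $(c_i)_{i\in I_-}$, so that the subsets called $I(1),I(2)$ there become $I(-1),I(-2)$ here. Since $P(z)$ is a polynomial, multiplying a power series in $z$ or in $z^{-1}$ by it is legitimate, so this factorization is a valid identity of formal series.

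Next I would apply Lemma \ref{lem:3.4} to the bracketed expression and multiply the resulting identity by $P(z)$, pushing $P(z)$ past the $\delta$- and $D\delta$-terms via \eqref{eq:3.13} and \eqref{eq:3.14} with $a=t_i$: one has $P(z)\delta(t_iz)=P(t_i^{-1})\delta(t_iz)$ and $P(z)(D\delta)(t_iz)=P(t_i^{-1})(D\delta)(t_iz)-(D_zP)(t_i^{-1})\delta(t_iz)$, where $P(t_i^{-1})\ne 0$ for $i\in I_-$ because the $t_j$ are distinct and $i\notin I_+$. This produces, for each $i\in I(-1)$, the term $P(t_i^{-1})\prod_{j\in I_-,j\ne i}(t_i/(t_i-t_j))^{c_j}\,\delta(t_iz)$, and for each $i\in I(-2)$ a combination of $(D\delta)(t_iz)$ and $\delta(t_iz)$ with explicit scalar coefficients.

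It then remains to match these coefficients with the right-hand side of \eqref{eq:3.15}. The common factor is identified by
\[
P(t_i^{-1})\prod_{j\in I_-,\,j\ne i}\Big(\frac{t_i}{t_i-t_j}\Big)^{c_j}
=\prod_{j\in I_+}(1-t_jt_i^{-1})^{a_j}\prod_{j\in I_-,\,j\ne i}(1-t_jt_i^{-1})^{a_j}
=\prod_{j\in I,\,j\ne i}(1-t_jt_i^{-1})^{a_j},
\]
using $t_i/(t_i-t_j)=(1-t_jt_i^{-1})^{-1}$ and $a_j=-c_j$ for $j\in I_-$; this already settles the $I(-1)$ sum and the $(D\delta)(t_iz)$-coefficients. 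For the leftover $\delta(t_iz)$-coefficient in the $I(-2)$ sum one must verify
\[
\Big(1+\sum_{j\in I_-,\,j\ne i}\frac{c_jt_j}{t_j-t_i}\Big)-\frac{(D_zP)(t_i^{-1})}{P(t_i^{-1})}
=1+\sum_{j\in I,\,j\ne i}\frac{a_j}{t_it_j^{-1}-1}.
\]

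This follows from the logarithmic derivative $(D_zP)(z)/P(z)=\sum_{j\in I_+}(-a_jt_jz)/(1-t_jz)$, evaluated at $z=t_i^{-1}$ to give $\sum_{j\in I_+}a_j/(t_it_j^{-1}-1)$, together with the rewriting $c_jt_j/(t_j-t_i)=a_j/(t_it_j^{-1}-1)$ for $j\in I_-$; the two sums then recombine over $I\setminus\{i\}$ because $i\in I_-$. I expect the only delicate point to be precisely this last coefficient bookkeeping — seeing that the correction term $-(D_zP)(t_i^{-1})/P(t_i^{-1})$ supplied by \eqref{eq:3.14} is exactly what promotes the ``$j\in I_-$'' sum coming out of Lemma \ref{lem:3.4} to the full ``$j\in I$'' sum in \eqref{eq:3.15}; the remaining steps are formal manipulations of generating functions and the $\delta$-function calculus.
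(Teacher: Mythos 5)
Your proposal is correct and follows essentially the same route as the paper's own proof: factor out the polynomial $P(z)=\prod_{i\in I_+}(1-t_iz)^{a_i}$, apply Lemma \ref{lem:3.4} to the remaining $I_-$-factors, and push $P(z)$ through the $\delta$-calculus via \eqref{eq:3.13}--\eqref{eq:3.14}, with the $D\delta$ correction term being precisely what promotes the sum over $I_-$ to the full sum over $I$. The only blemish is a sign slip in the prose: $(D_zP)(t_i^{-1})/P(t_i^{-1})$ equals $-\sum_{j\in I_+}a_j(t_it_j^{-1}-1)^{-1}$, so the quantity you quote is actually the value of $-(D_zP)(t_i^{-1})/P(t_i^{-1})$, which is exactly what your displayed identity requires, and the conclusion stands.
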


\begin{proof} Note that $(1-t_iz)^{a_i}=(1-t_i^{-1}z^{-1})^{a_i}(-t_iz)^{a_i}$ if
$a_i\ge 0$. Set $I_+=\{i\in I| a_i\ge 0\}$, and $I_-=I(-1)\cup
I(-2)$.
 From Lemma \ref{lem:3.4} and (\ref{eq:3.13}), (\ref{eq:3.14}), one has that
\begin{align*}
&\prod_{i\in I}(1-t_iz)^{a_i}-\prod_{i\in I}(1-t_i^{-1}z^{-1})^{a_i}(-t_iz)^{a_i}\\
=&(\prod_{k\in I_+}(1-t_kz)^{a_k})\bigg\{\sum_{i\in
I(-1)}\prod_{j\in I_-,j\ne i}(1-t_jt_i^{-1})^{a_j}
\delta(t_iz)+\\
&\sum_{i\in I(-2)}\prod_{j\in I_-,j\ne
i}(1-t_jt_i^{-1})^{a_j}\big[(D\delta)(t_iz) +(\sum_{j\in I_-,j\ne
i}a_j(t_it_j^{-1}-1)^{-1}+1)\delta(t_iz)\big]\bigg\}
\end{align*}
\begin{align*}
=&\sum_{i\in I(-1)}\prod_{j\in I,j\ne
i}(1-t_jt_i^{-1})^{a_j}\delta(t_iz)
+\sum_{i\in I(-2)}\prod_{j\in I,j\ne i}(1-t_jt_i^{-1})^{a_j}(D\delta)(t_iz)+\\
&\sum_{i\in I(-2)}\prod_{j\in I,j\ne i}(1-t_jt_i^{-1})^{a_j}
(\sum_{j\in I_-,j\ne i}a_j(t_it_j^{-1}-1)^{-1}+1)\delta(t_iz)+\\
&(\sum_{j\in I_+}a_j(1-t_jz)^{-1}(-t_jz))\prod_{k\in
I_+}(1-t_kz)^{a_k}
[\sum_{i\in I(-2)}\prod_{j\in I_-,j\ne i}(1-t_jt_i^{-1})^{a_j}\delta(t_iz)]\\
=&\sum_{i\in I(-1)}\prod_{j\in I,j\ne i}(1-t_jt_i^{-1})^{a_j}\delta(t_iz)+\\
&\sum_{i\in I(-2)}\prod_{j\in I,j\ne i}(1-t_jt_i^{-1})^{a_j}
[(D\delta)(t_iz)+(1+\sum_{j\in I,j\ne
i}a_j(t_it_j^{-1}-1)^{-1})\delta(t_iz)]
\end{align*}
as desired.
\end{proof}

\begin{rem}   If one takes $t_i=\omega^{-i},
a_i=\la \nu^i\alpha,\beta \ra$ for $1\le i\le m, \alpha,\beta\in Q$
in (\ref{eq:3.15}), and notes that
  $$\sum_{p\in \mathbb Z_m, p\ne r}\la \nu^p\alpha,\beta\ra=
\sum_{p\in \mathbb Z_m, p \ne
0}\frac{\la(\nu^p+\nu^{-p})\nu^r\alpha,\beta\ra}{1-\omega^{p}},$$
where $r\in \mathbb Z_m$ satisfies $\la\nu^r\alpha,\beta\ra=-2$,
then the identity (\ref{eq:3.15}) becomes the identity given in
Proposition 4.1 of \cite{L}.
\end{rem}

\section{Vertex operator realizations of twisted $\Gamma$-Lie algebras} \label{sec 4}
 In this section we first state the main result of this paper, and then prove it by calculating the commutator relations for the twisted $\Gamma$-vertex operators
 $Y_{\rho_ii,\rho_jj}(c_1,z_1)$ and $Y_{\rho_kk,\rho_ll}(c_2,z_2)$,
where $(\rho_ii,\rho_jj),(\rho_kk,\rho_ll)\in \mathcal{J}$ and
$c_1,c_2\in \Gamma$.
 For the sake of convenience, we write
\begin{align*}
\underline{\alpha}:=\rho_i\epsilon_i-\rho_j\epsilon_j,\
\underline{\beta}:=\rho_k\epsilon_k -\rho_l\epsilon_l.\end{align*}
\subsection{The main theorem}
For $\alpha,\beta\in Q$, set
\begin{align*}
\varepsilon'(\alpha,\beta)=\prod_{-m/2<p<0} (-\omega^p)^{\la\alpha,\nu^p\beta\ra},\
\varepsilon(\alpha,\beta)=\varepsilon'(\alpha,\beta)\varepsilon_C(\alpha,\beta).
\end{align*}
It follows from (\ref{eq:2.1}) that $\varepsilon$ is a normalized
2-cocycle on $Q$ associated with the function $(-1)^{\la
\alpha,\beta\ra}$ (cf. (\ref{eq:c1})). Then by the lattice construction given in Section 2.4, we
have an involutive associative algebra $(\mathcal G(Q),\tau)$ and a
bilinear form $\la , \ra_\mathcal G$.

Recall the constants $\eta(r,\alpha), r\in \mathbb Z_m,\alpha\in Q$ defined in (\ref{eq:c3}). We define a linear map
$\bar{\nu}$ on $\mathcal G(Q)$ as follows
$$\bar{\nu}(e_{\rho_ii,\rho_jj})=\eta(1,\rho_i\epsilon_i-\rho_j\epsilon_j)e_{\rho_ii_1,\rho_jj_1},\
 (\rho_ii,\rho_jj)\in \mathcal{J}.$$

\begin{lem}
The linear map $\bar{\nu}$ is an  automorphism of the involutive
associative algebra $(\mathcal G(Q),\tau)$, and $\bar{\nu}^{m}=$Id.
Moreover, $\bar{\nu}$ preserves the form $\la ,\ra_\mathcal G$.
\end{lem}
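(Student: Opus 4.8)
The plan is to verify the three claimed properties of $\bar{\nu}$ in order, reducing each to known facts about the cocycle $\eta$ and the combinatorics of the indices $i_r$. First I would record the basic identities for $\eta$ that follow from \eqref{eq:c3}: since $\hat{\nu}$ is an automorphism of $\mathbb C[Q,\varepsilon_C]$ with $\hat{\nu}^m=\mathrm{Id}$, one has the cocycle-type relation $\eta(r,\alpha)\eta(s,\nu^r\alpha)=\eta(r+s,\alpha)$ for $r,s\in\mathbb Z_m$ and $\alpha\in Q$, together with $\eta(0,\alpha)=1$ and the multiplicativity coming from $\hat{\nu}^r(e_\alpha e_\beta)=\hat{\nu}^r(e_\alpha)\hat{\nu}^r(e_\beta)$, which gives $\eta(r,\alpha+\beta)\varepsilon_C(\alpha,\beta)=\eta(r,\alpha)\eta(r,\beta)\varepsilon_C(\nu^r\alpha,\nu^r\beta)$. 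Combined with the definition of $\varepsilon=\varepsilon'\varepsilon_C$ and the fact (to be checked from the definition of $\varepsilon'$ and $C$) that $\varepsilon'$ and hence $\varepsilon$ transform under $\nu^r$ in a compatible way, these are the only inputs needed.

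For the automorphism property I would compute $\bar{\nu}(e_{\rho_ii,\rho_jj}e_{\rho_kk,\rho_ll})$ and $\bar{\nu}(e_{\rho_ii,\rho_jj})\bar{\nu}(e_{\rho_kk,\rho_ll})$ using the multiplication rule $e_{\rho_ii,\rho_jj}e_{\rho_kk,\rho_ll}=\delta_{\rho_jj,\rho_kk}\varepsilon(\underline\alpha,\underline\beta)e_{\rho_ii,\rho_ll}$, and show the two agree; the key point is that $(\rho_jj)_1=(\rho_kk)_1$ iff $\rho_jj=\rho_kk$ (since $r\mapsto i_r$ is built from the permutation $\sigma$ and is a bijection, see \eqref{eq:n2}), so the $\delta$'s match, and the scalar identity reduces to the multiplicativity relation for $\eta$ and the $\nu$-equivariance of $\varepsilon$. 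Commutation with $\tau$ is checked on generators: apply $\tau$ then $\bar\nu$, versus $\bar\nu$ then $\tau$, to $e_{\rho_ii,\rho_jj}$, using $\tau(e_{\rho_ii,\rho_jj})=(-1)^{1-\delta_{ij}}e_{-\rho_jj,-\rho_ii}$, the fact that $\nu$ commutes with the sign involution on indices (so $(-\rho_jj)_1=-(\rho_jj)_1$, which follows from \eqref{eq:n2} since negation of $\rho_i$ does not change the $\iota$-product), and the identity $\eta(1,-\nu^p\alpha)=\eta(1,\alpha)$-type relations extracted from the multiplicativity of $\eta$ applied to $\alpha$ and $-\alpha$. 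The relation $\bar{\nu}^m=\mathrm{Id}$ follows by iterating the definition $m$ times: $\bar{\nu}^m(e_{\rho_ii,\rho_jj})=\bigl(\prod_{r=0}^{m-1}\eta(1,\nu^r\underline\alpha)\bigr)e_{\rho_ii_m,\rho_jj_m}$, where $\prod_{r=0}^{m-1}\eta(1,\nu^r\underline\alpha)=\eta(m,\underline\alpha)=1$ by the cocycle relation and $\hat\nu^m=\mathrm{Id}$, and $i_m=i_0=i$ since $\nu^m=\mathrm{Id}$ and the product of all $\iota$'s around the cycle is $1$ (as $\nu^m(\epsilon_i)=\epsilon_i$). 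Finally, that $\bar\nu$ preserves $\la\,,\,\ra_\mathcal G$ is an entirely analogous computation to the automorphism property, using the formula for $\la e_{\rho_ii,\rho_jj},e_{\rho_kk,\rho_ll}\ra_\mathcal G=\delta_{\rho_jj,\rho_kk}\delta_{\rho_ii,\rho_ll}\varepsilon(\underline\alpha,\underline\beta)$ and again the bijectivity of $r\mapsto i_r$ plus the $\nu$-equivariance of $\varepsilon$ and multiplicativity of $\eta$; alternatively, since $\hat\nu$ is defined via a lift of $\nu$ to $\widehat Q$ that preserves the relevant structure, one may invoke that $\nu$ is an isometry of $P$ preserving $Q$ (assumption (A3)).

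The main obstacle will be bookkeeping the scalar factors: one must pin down exactly how $\varepsilon'(\alpha,\beta)=\prod_{-m/2<p<0}(-\omega^p)^{\la\alpha,\nu^p\beta\ra}$ behaves under $\alpha\mapsto\nu\alpha$, $\beta\mapsto\nu\beta$, and especially under negation $\alpha\mapsto-\alpha$ (where assumption (A5) and the parity of $m$ enter through the $p=m/2$ term that $\varepsilon'$ deliberately omits), and to confirm that the resulting discrepancy is absorbed by $\eta$ via its multiplicativity relation and by $\varepsilon_C$ via \eqref{eq:2.1}. Once the transformation rules for $\varepsilon'$ are nailed down, each of the three verifications is a short, if somewhat intricate, manipulation of products of roots of unity, and no further ideas are required.
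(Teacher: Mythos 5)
Your proposal is correct and follows essentially the same route as the paper: everything reduces to the single identity $\varepsilon(\alpha,\beta)\eta(r,\alpha+\beta)=\varepsilon(\nu^r\alpha,\nu^r\beta)\eta(r,\alpha)\eta(r,\beta)$, obtained by combining the multiplicativity of $\eta$ (from $\hat{\nu}$ being an automorphism of $\mathbb C[Q,\varepsilon_C]$) with the $\nu$-invariance $\varepsilon'(\alpha,\beta)=\varepsilon'(\nu^r\alpha,\nu^r\beta)$, which is exactly the paper's argument, and your extra details ($\bar{\nu}^m=\mathrm{Id}$ via the cocycle relation, bijectivity of $r\mapsto i_r$) fill in steps the paper leaves implicit. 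One simplification you will find when carrying out the $\tau$-commutation check: since $\tau$ sends $e_{\rho_ii,\rho_jj}$ to $\pm e_{-\rho_jj,-\rho_ii}$ and the root attached to $e_{-\rho_jj,-\rho_ii}$ is again $\rho_i\epsilon_i-\rho_j\epsilon_j$ (not its negative), both orders of composition produce the same factor $\eta(1,\rho_i\epsilon_i-\rho_j\epsilon_j)$, so the relation $\eta(1,-\alpha)=\eta(1,\alpha)$ you propose to extract (which is not obviously true) and the behaviour of $\varepsilon'$ under negation are never actually needed.
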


 \begin{proof} Recall that $\hat{\nu}$  is an automorphism  of
$\mathbb{C}[Q,\varepsilon_C]$ (see \eqref{eq:2..2}). Then we have
\begin{align*}
\varepsilon_C(\alpha,\beta)\eta(r,\alpha+\beta)e_{\nu^r(\alpha+\beta)}
=\eta(r,\alpha)\eta(r,\beta)\varepsilon_C(\nu^r(\alpha),\nu^r(\beta))e_{\nu^r(\alpha+\beta)}.
\end{align*}
This, together with the fact
$\varepsilon'(\alpha,\beta)=\varepsilon'(\nu^r\alpha,\nu^r\beta)$,
gives
\begin{align*}
\varepsilon(\alpha,\beta)\eta(r,\alpha+\beta)=\varepsilon(\nu^r(\alpha),\nu^r(\beta))\eta(r,\alpha)\eta(r,\beta),
\ \forall \alpha,\beta \in Q, r\in \mathbb Z_m.\end{align*}
One can conclude from this fact that $\bar{\nu}$ is an automorphism of
$(\mathcal G(Q),\tau)$ and preserves the form $\la ,\ra_\mathcal G$.
\end{proof}
By definition and the previous lemma, we see that the automorphism  $\bar{\nu}$ of
$(\mathcal G(Q),\tau)$ is compatible with the isometry $\nu$ (cf. (\ref{eq:c2})). Therefore, we have
a twisted $\Gamma$-Lie algebra $\widehat{\mathcal G}(Q,\nu,m,\Gamma)$ with Lie bracket given in Proposition \ref{prop:cr}.

Now we state our main theorem of this paper. For $(\rho_ii,\rho_jj)\in \mathcal J$ and $c\in \Gamma$,
let $y_{\rho_ii,\rho_jj}(c,n)$ be the $-n$-component of $Y_{\rho_ii,\rho_jj}(c,z)$. Set $Q'=\{\alpha\in Q:
\la\alpha,\alpha\ra=2\},$ and  $Q''=\{\rho_i\epsilon_i-\rho_j\epsilon_j:1\le i,j\le
N,\rho_i,\rho_j=\pm 1\}\cap Q.$ Then we have
\begin{thm} \label{thm:4.1} The generalized Fock space $V_T$ affords a representation of the twisted $\Gamma$-Lie algebra
$\widehat{\mathcal G}(Q,\nu,m,\Gamma)$ with the actions given by
\begin{align*} \widetilde{e_{\rho_ii,\rho_jj}}(c,n)\mapsto y_{\rho_ii,\rho_jj}(c,n),\ \mathbf c\mapsto 1,
\end{align*}
where $(\rho_ii,\rho_jj)\in \mathcal J, c\in \Gamma$ and $n\in \mathbb Z$. Moreover, if $\Gamma\ne \{1\}$ and $\mathrm{span}_\mathbb
ZQ''=Q$, or if $\Gamma=\{1\}$ and $\mathrm{span}_\mathbb
ZQ'=Q$. Then the $\widehat{\mathcal G}(Q,\nu,m,\Gamma)$-module
$V_T$ is irreducible if and only if the $\mathbb
C[Q,\varepsilon_C]$-module $T$ is irreducible.
\end{thm}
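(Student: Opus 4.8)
The plan is to prove the two assertions in turn. For the representation claim I would show that the operators $y_{\rho_ii,\rho_jj}(c,n)$ on $V_T$, together with the identity operator in place of $\mathbf c$, satisfy the defining relations of $\widehat{\mathcal G}(Q,\nu,m,\Gamma)$; equivalently, that the well-defined fields $Y_{\rho_ii,\rho_jj}(c,z)$ obey the relations of Proposition \ref{prop:cr} with $G$ replaced by $Y$ and $\mathbf c$ by $1$. The two symmetry relations \eqref{eq:R1}, \eqref{eq:R2} are exactly Proposition \ref{prop:2.3}, and they coincide with the first two relations of Proposition \ref{prop:cr}; this is precisely what is needed for the assignment $\widetilde{e_{\rho_ii,\rho_jj}}(c,n)\mapsto y_{\rho_ii,\rho_jj}(c,n)$ to be well defined on the spanning set of $\widehat{\mathcal G}(Q,\nu,m,\Gamma)$. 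The remaining point is the commutator formula \eqref{eq:cr}. For this I would first dispose of the degenerate cases in which one or both operators equal some $\rho_i\epsilon_i(z)$, which reduce to brackets of Heisenberg fields computed directly from Lemma \ref{lem:2.2}. In the main case, where both operators are of the normal-ordered type, I would compute the two products $Y_{\rho_ii,\rho_jj}(c_1,z_1)Y_{\rho_kk,\rho_ll}(c_2,z_2)$ and $Y_{\rho_kk,\rho_ll}(c_2,z_2)Y_{\rho_ii,\rho_jj}(c_1,z_1)$ by the usual procedure, using Lemma \ref{lem:2.2} to move the $E^+$-factors of the left operator past the $E^-$-factors of the right one and the operators $z^{(\cdot)},c^{(\cdot)}$ past $e_{\underline\beta}$. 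Each product becomes the same normal-ordered operator (up to a scalar from reordering the $e$'s, absorbed into the $\varepsilon$-factors) times a scalar rational function that is a product of factors $(1-c\,\omega^p z_2/z_1)^{\langle\rho_i\epsilon_i,\,\rho_k\nu^p\epsilon_k\rangle}$ and the like, where $c$ ranges over the four ratios built from $c_1,c_2$ and $\pm 1$. The commutator is therefore the common normal-ordered operator times the formal difference of the two rational functions, expanded respectively in the regions $|z_2|<|z_1|$ and $|z_1|<|z_2|$; after the substitution $z=z_2/z_1$ this difference has precisely the form $\prod_i(1-t_iz)^{a_i}-\prod_i(1-t_i^{-1}z^{-1})^{a_i}(-t_iz)^{a_i}$ with the $t_i$ equal to the various $c\,\omega^p$ and each $a_i\in\{0,\pm1,\pm2\}$. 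Applying \eqref{eq:3.15} converts this into a sum of $\delta$- and $(D\delta)$-terms supported on $z_2/z_1=t_i^{-1}$, and at each such point the substitution formulas \eqref{eq:3.13}, \eqref{eq:3.14} collapse the normal-ordered operator to a single $Y$-field, or, for a $(D\delta)$-term, also to a central scalar. Matching the coefficients $\xi_r(\underline\alpha,\underline\beta)$, the signs $(-1)^{\delta_{ij}}$, $(-1)^{\delta_{kl}}$, and the support conditions $\delta_{c_1c_2,1}$, $\delta_{c_1,c_2}$ (which arise because a $\delta$- or $(D\delta)$-term survives only when the relevant inner product is negative, forcing the corresponding $c$ to be $1$) then reproduces \eqref{eq:cr}.

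For the irreducibility statement I would first restrict the action to the Heisenberg subalgebra $\mathcal H(\nu)$. Its generators $x(n)$ with $n\ne 0$ occur among the modes $y_{ii}(1,n)$ of the fields $\epsilon_i(z)=Y_{ii}(1,z)$, since $\{(\epsilon_i)_{(n)}:1\le i\le N\}$ spans $\mathcal H_{(n)}$, so $\mathcal H(\nu)$ acts on $V_T$ through the image of $\widehat{\mathcal G}(Q,\nu,m,\Gamma)$. As $S$ is the canonical irreducible $\mathcal H(\nu)$-module with $\mathrm{End}_{\mathcal H(\nu)}(S)=\mathbb C$ and $V_T=T\otimes S$ is $S$-isotypic over $\mathcal H(\nu)$, every $\widehat{\mathcal G}(Q,\nu,m,\Gamma)$-submodule $W$ of $V_T$ is in particular an $\mathcal H(\nu)$-submodule, hence $W=T'\otimes S$ for a unique subspace $T'\subseteq T$, determined by $W\cap(T\otimes 1)=T'\otimes 1$, where $T\otimes 1$ is the space of $\mathcal H(\nu)^+$-annihilated vectors. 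It then remains to show that $W$ is $\widehat{\mathcal G}(Q,\nu,m,\Gamma)$-stable if and only if $T'$ is a $\mathbb C[Q,\varepsilon_C]$-submodule of $T$. For the nontrivial direction, for $t\in T'$ the component of $Y_{\rho_ii,\rho_jj}(c,z)(t\otimes 1)$ lying in $T\otimes 1$ equals $e_{\underline\alpha}t\otimes 1$ up to a nonzero scalar, a power of $z$ and a scalar in $c$, since $E^+(\cdot,z)$ fixes $1$ and $E^-(\cdot,z)\cdot 1=1+(\text{terms of positive degree in }S)$; extracting this component yields $e_{\underline\alpha}T'\subseteq T'$ for every $\underline\alpha$ that occurs. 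When $\Gamma\ne\{1\}$ one may take $c\ne 1$, and since then the constants $\zeta$, $\kappa$ are all nonzero ($c$ is never a root of unity, $\Gamma$ being generic), every $\underline\alpha\in Q''$ occurs; when $\Gamma=\{1\}$ one has $c=1$ and the $Y$-fields with $\langle\underline\alpha,\underline\alpha\rangle\ne 2$ are either Heisenberg fields or vanish ($\kappa=0$), so exactly the $\underline\alpha\in Q'$ occur. Under the hypothesis $\mathrm{span}_{\mathbb Z}Q''=Q$ (resp.\ $\mathrm{span}_{\mathbb Z}Q'=Q$), the corresponding $e_{\underline\alpha}$ generate $\mathbb C[Q,\varepsilon_C]$ since $e_\alpha e_\beta\in\mathbb C^{*}e_{\alpha+\beta}$, so $T'$ is a $\mathbb C[Q,\varepsilon_C]$-submodule. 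The converse is immediate and needs no hypothesis: if $T'$ is a submodule, each $Y_{\rho_ii,\rho_jj}(c,z)$ maps $T'\otimes S$ into $(e_{\underline\alpha}T')\otimes S\subseteq T'\otimes S$, the remaining operators $E^{\pm}$, $z^{(\cdot)}$, $c^{(\cdot)}$, $\alpha(0)$ preserving the $T$-columns. Hence the $\widehat{\mathcal G}(Q,\nu,m,\Gamma)$-submodules of $V_T$ are exactly the subspaces $T'\otimes S$ with $T'$ a $\mathbb C[Q,\varepsilon_C]$-submodule of $T$, and $V_T$ is irreducible if and only if $T$ is.

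The main obstacle is the commutator computation in the first part: keeping track of all the scalar factors produced by Lemma \ref{lem:2.2}, recognizing their formal difference as exactly the input of \eqref{eq:3.15}, and carrying the delta-function substitutions through so that all six families of terms in \eqref{eq:cr}---with their precise $\xi_r$-coefficients, signs, and $\delta_{c_1c_2,1}$, $\delta_{c_1,c_2}$ constraints---come out correctly. Once the isotypic decomposition of $V_T$ over $\mathcal H(\nu)$ and the bookkeeping of which $e_{\underline\alpha}$ occur are in place, the irreducibility part is routine.
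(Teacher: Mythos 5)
Your proposal is correct and follows essentially the same route as the paper: normal-order the product of two $X$-operators via Lemma \ref{lem:2.2} (as packaged in Lemma \ref{lem:4.3.}), convert the scalar difference of rational functions into $\delta$- and $(D\delta)$-terms via Proposition \ref{prop:3.6}, and collapse with \eqref{eq:4.1}--\eqref{eq:4.2} to match \eqref{eq:cr}; the paper merely makes explicit the case division according to which of $c_1,c_2,c_1c_2,c_1c_2^{-1}$ equal $1$ (needed because \eqref{eq:3.15} requires distinct $t_i$, and these coincidences are exactly what produce the exponent $-2$ and hence the $(D\delta)$ and central terms), a point your sketch registers only implicitly. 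Your irreducibility argument is the standard one whose proof the paper omits, and it is sound.
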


\subsection{Somes lemmas}
To prove Theorem \ref{thm:4.1}, we need the following four lemmas.  The first lemma is well-known (cf. \cite{FLM}).
\begin{lem}\label{lem:4.2}
Let $F(z_1,z_2)$  be a formal power series in $z_1,z_2$ with
coefficients in a vector space such that $\lim_{z_2\rightarrow
cz_1}F(z_1,z_2)$ exists for some $c\in \mathbb C^\ast$. Then

\begin{equation}\label{eq:4.1}
F(z_1,z_2)\delta(z_2/cz_1)=F(z_1,cz_1)\delta(z_2/cz_1),
\end{equation}
\begin{equation}\label{eq:4.2}
F(z_1,z_2)(D\delta)(z_2/cz_1)=F(z_1,cz_1)(D\delta)(z_2/cz_1)+
(D_{z_2}F)(z_1,z_2)\delta(z_2/cz_1).
\end{equation}
\end{lem}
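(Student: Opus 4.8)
The plan is to reduce both formulas to the elementary substitution behaviour of the formal delta function and then work termwise in an expansion of $F$ as a series in $z_2$. The whole content lies in two one-line computations, after which the hypothesis on $F$ is used solely to guarantee that the resulting infinite sums are legitimate.

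First I would record the two basic substitution rules. Writing $\delta(z_2/cz_1)=\sum_n (z_2/cz_1)^n$ and shifting the summation index $n\mapsto n-k$ gives, for every $k\in\mathbb Z$,
\begin{equation*}
z_2^{\,k}\,\delta(z_2/cz_1)=(cz_1)^{k}\,\delta(z_2/cz_1).
\end{equation*}
Performing the same shift on $(D\delta)(z_2/cz_1)=\sum_n n\,(z_2/cz_1)^n$ produces an extra boundary term,
\begin{equation*}
z_2^{\,k}\,(D\delta)(z_2/cz_1)=(cz_1)^{k}\,(D\delta)(z_2/cz_1)-k\,(cz_1)^{k}\,\delta(z_2/cz_1),
\end{equation*}
whose sign is exactly the one forced by the index shift and is consistent with the one-variable identities \eqref{eq:3.13} and \eqref{eq:3.14}.

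Next I would expand $F(z_1,z_2)=\sum_k F_k(z_1)\,z_2^{\,k}$, with each $F_k(z_1)$ a formal series in $z_1$. The hypothesis that $\lim_{z_2\to cz_1}F(z_1,z_2)$ exists is precisely the assertion that $F(z_1,cz_1)=\sum_k F_k(z_1)(cz_1)^k$ defines a genuine formal series (only finitely many terms contribute to each bidegree after the substitution); I would invoke this to justify all the resummations that follow. For \eqref{eq:4.1} I would multiply the expansion by $\delta(z_2/cz_1)$ and apply the first rule to each summand, so that $z_2^{\,k}$ is replaced by $(cz_1)^k$ throughout, giving $F(z_1,cz_1)\delta(z_2/cz_1)$. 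For \eqref{eq:4.2} I would apply the second rule termwise: the principal terms assemble into $F(z_1,cz_1)(D\delta)(z_2/cz_1)$, while the collected boundary terms give $\big(\sum_k k\,F_k(z_1)(cz_1)^k\big)\delta(z_2/cz_1)$. Since $D_{z_2}F=z_2\partial_{z_2}F=\sum_k k\,F_k(z_1)z_2^{\,k}$, the bracketed coefficient is $(D_{z_2}F)(z_1,cz_1)$, and a final application of \eqref{eq:4.1} to $D_{z_2}F$ lets me rewrite it as $(D_{z_2}F)(z_1,z_2)$ multiplying $\delta(z_2/cz_1)$, which is the shape recorded in \eqref{eq:4.2}.

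The algebra here is completely routine; the only point needing genuine care is formal well-definedness. Expanding in $z_2$, multiplying by $\delta$ or $D\delta$, and resumming could in principle create coefficients that are infinite sums, and the identities are meaningful only inside the ring of formal series finitely supported in each bidegree. I would therefore isolate at the outset the observation that the limit hypothesis is exactly what keeps every intermediate expression in that ring, and treat the remainder as the mechanical termwise application of the two substitution rules above.
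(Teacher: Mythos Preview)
Your approach is the standard direct one, and there is in fact no proof in the paper to compare against: the lemma is stated as well known with a citation to \cite{FLM}, so a termwise argument via the substitution rules for $\delta$ and $D\delta$ is exactly what is expected.

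There is, however, a sign slip in your collection step for \eqref{eq:4.2}. Your own second substitution rule correctly reads
\[
z_2^{\,k}(D\delta)(z_2/cz_1)=(cz_1)^{k}(D\delta)(z_2/cz_1)-k\,(cz_1)^{k}\delta(z_2/cz_1),
\]
so after summing against $F_k(z_1)$ the boundary terms assemble to $-\bigl(\sum_k k\,F_k(z_1)(cz_1)^k\bigr)\delta(z_2/cz_1)=-(D_{z_2}F)(z_1,cz_1)\,\delta(z_2/cz_1)$, with a \emph{minus} sign, not the plus you wrote. This is the sign that matches the one-variable identity \eqref{eq:3.14} you yourself invoke, and it is also the sign the paper actually uses when it applies the lemma in the proof of Theorem~\ref{thm:4.1} (see the displayed computation of $B(z_1,z_2)(D\delta)(\omega^rz_2/c_1z_1)$ in Case~5). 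The $+$ in the statement of \eqref{eq:4.2} is thus a typo in the paper. You should make your own argument internally consistent (keep the minus throughout) and, if you wish, flag the discrepancy with the stated formula rather than silently adjusting to match it.
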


The following two lemmas can be shown by applying  Lemma \ref{lem:2.2} and a straightforward computation.
\begin{lem} \label{lem:4.3.}
$$[X_{\rho_ii,\rho_jj}(c_1,z_1),X_{\rho_kk,\rho_ll}(c_2,z_2)]
=B(z_1,z_2)\cdot C(z_1,z_2),$$
where
\begin{align*}
 &B(z_1,z_2)
 =E^-(\rho_i\epsilon_i,z_1)E^-(-\rho_j\epsilon_j,c_1z_1)
E^-(\rho_k\epsilon_k,z_2)E^-(-\rho_l\epsilon_l,c_2z_2)E^+(\rho_i\epsilon_i,z_1)\\
&  E^+(-\rho_j\epsilon_j,c_1z_1)
E^+(\rho_k\epsilon_k,z_2)E^+(-\rho_l\epsilon_l,c_2z_2) e_{\underline{\alpha}} e_{\underline{\beta}}
z_1^{\la\underline{\beta},\sum\nu^p\underline{\alpha}\ra}
 c_1^{\la\underline{\beta},-\rho_j\sum\nu^p\epsilon_j\ra}
\\& z_1^{\sum\nu^p\underline{\alpha}+\la\underline{\alpha},\sum\nu^p\underline{\alpha}\ra/2}
z_2^{\sum
\nu^p\underline{\beta}+\la\underline{\beta},\sum\nu^p\underline{\beta}\ra/2}
c_1^{-\rho_j\sum\nu^p\epsilon_j+\la\epsilon_j,\sum\nu^p\epsilon_j\ra/2}
c_2^{-\rho_l\sum\nu^p\epsilon_l+\la\epsilon_l,\sum\nu^p\epsilon_l\ra/2},
\end{align*}
and
\begin{align*}
&C(z_1,z_2)
=\prod_{p\in
\mathbb{Z}_m}(1-\omega^pz_2/z_1)^{\rho_i\rho_k\la\epsilon_i,\nu^p\epsilon_k\ra}
(1-\omega^pz_2/c_1z_1)^{-\rho_j\rho_k\la\epsilon_j,\nu^p\epsilon_k\ra}\\
&\cdot
(1-\omega^pc_2z_2/z_1)^{-\rho_i\rho_l\la\epsilon_i,\nu^p\epsilon_l\ra}
(1-\omega^pc_2z_2/c_1z_1)^{\rho_j\rho_l\la\epsilon_j,\nu^p\epsilon_l\ra}\\
&-\prod_{p\in
\mathbb{Z}_m}(1-z_1/\omega^pz_2)^{\rho_i\rho_k\la\epsilon_i,\nu^p\epsilon_k\ra}
(-\omega^pz_2/z_1)^{\rho_i\rho_k\la\epsilon_i,\nu^p\epsilon_k\ra}\\
&\cdot(1-c_1z_1/\omega^pz_2)^{-\rho_j\rho_k\la\epsilon_j,\nu^p\epsilon_k\ra}
(-\omega^pz_2/c_1z_1)^{-\rho_j\rho_k\la\epsilon_j,\nu^p\epsilon_k\ra}\\
&\cdot
(1-z_1/\omega^pc_2z_2)^{-\rho_i\rho_l\la\epsilon_i,\nu^p\epsilon_l\ra}
(-\omega^pc_2z_2/z_1)^{-\rho_i\rho_l\la\epsilon_i,\nu^p\epsilon_l\ra}\\
&\cdot
(1-c_1z_1/\omega^pc_2z_2)^{\rho_j\rho_l\la\epsilon_j,\nu^p\epsilon_l\ra}
(-\omega^pc_2z_2/c_1z_1)^{\rho_j\rho_l\la\epsilon_j,\nu^p\epsilon_l\ra}. \qed
\end{align*}
\end{lem}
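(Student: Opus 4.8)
The plan is to bring each of the two products $X_{\rho_ii,\rho_jj}(c_1,z_1)X_{\rho_kk,\rho_ll}(c_2,z_2)$ and $X_{\rho_kk,\rho_ll}(c_2,z_2)X_{\rho_ii,\rho_jj}(c_1,z_1)$ into normal-ordered form and to observe that both reduce to the \emph{same} operator $B(z_1,z_2)$ multiplied by a scalar series; these two scalar series turn out to be exactly the two products displayed in $C(z_1,z_2)$, so that the commutator equals $B(z_1,z_2)C(z_1,z_2)$. Write $X_1=X_{\rho_ii,\rho_jj}(c_1,z_1)$ and $X_2=X_{\rho_kk,\rho_ll}(c_2,z_2)$, and recall that each $X_s$ is a product of a group-algebra element $e_{\underline\gamma}$, two creation factors $E^-$, two annihilation factors $E^+$, and the zero-mode powers $z^{\ast}c^{\ast}$ read off from $\mathcal H_{(0)}$. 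Since the factors $E^\pm(\cdot,z)$ act on the symmetric-algebra tensor factor $S$ while $e_{\underline\gamma}$ and the zero-mode powers act on $T$, these groups of operators on different tensor factors commute, which drastically limits the nontrivial reorderings.

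First I would normal-order $X_1X_2$. The only nontrivial moves are: (i) commuting the two $E^+$ factors of $X_1$ rightward past the two $E^-$ factors of $X_2$, for which the last identity of Lemma~\ref{lem:2.2} produces precisely the four factors $\prod_{p}(1-\omega^pz_2/z_1)^{\rho_i\rho_k\la\epsilon_i,\nu^p\epsilon_k\ra}(1-\omega^pz_2/c_1z_1)^{-\rho_j\rho_k\la\epsilon_j,\nu^p\epsilon_k\ra}(1-\omega^pc_2z_2/z_1)^{-\rho_i\rho_l\la\epsilon_i,\nu^p\epsilon_l\ra}(1-\omega^pc_2z_2/c_1z_1)^{\rho_j\rho_l\la\epsilon_j,\nu^p\epsilon_l\ra}$, i.e.\ the first product in $C(z_1,z_2)$; and (ii) moving the zero-mode powers $z_1^{\ast}c_1^{\ast}$ of $X_1$ rightward past $e_{\underline\beta}$, for which the relations $z^{\sum\nu^p\gamma}e_\beta=e_\beta z^{\sum\nu^p\gamma+\sum\la\nu^p\gamma,\beta\ra}$ and $c^{\sum\nu^p\gamma}e_\beta=e_\beta c^{\sum\nu^p\gamma+\sum\la\nu^p\gamma,\beta\ra}$ of Lemma~\ref{lem:2.2} produce the scalars $z_1^{\la\underline\beta,\sum\nu^p\underline\alpha\ra}$ and $c_1^{\la\underline\beta,-\rho_j\sum\nu^p\epsilon_j\ra}$. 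Collecting all $E^-$ on the left, all $E^+$ next, then $e_{\underline\alpha}e_{\underline\beta}$, and finally the zero-mode powers, one reads off $X_1X_2=B(z_1,z_2)\cdot(\text{first product in }C)$.

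Next I would normal-order $X_2X_1$ by the identical procedure. Commuting the two $E^+$ factors of $X_2$ past the two $E^-$ factors of $X_1$ now yields $\prod_p(1-\omega^pz_1/z_2)^{\cdots}$-type factors; after reindexing $p\mapsto -p$ in the product over $\mathbb Z_m$ and using the isometry relation $\la\epsilon_k,\nu^{-p}\epsilon_i\ra=\la\epsilon_i,\nu^p\epsilon_k\ra$ (so that $\omega^{-p}z_1/z_2=z_1/\omega^pz_2$), these become the $(1-z_1/\omega^pz_2)^{\cdots}$ factors of the second product in $C$. Reordering the group elements through $e_{\underline\beta}e_{\underline\alpha}=C(\underline\beta,\underline\alpha)e_{\underline\alpha}e_{\underline\beta}$ (a consequence of the relation $e_\alpha e_\beta=C(\alpha,\beta)e_\beta e_\alpha$ of Lemma~\ref{lem:2.2}) and moving the zero-mode powers $z_2^{\ast}c_2^{\ast}$ of $X_2$ past $e_{\underline\alpha}$ supplies the remaining scalars. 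The decisive bookkeeping step is to verify that the phase $C(\underline\beta,\underline\alpha)=\prod_p(-\omega^{-p})^{\la\underline\beta,\nu^p\underline\alpha\ra}$, together with the zero-mode powers just produced, conspires to (a) restore the \emph{same} normal-ordered core $B(z_1,z_2)$ as in the previous step—in particular the asymmetric exponents $z_1^{\la\underline\beta,\sum\nu^p\underline\alpha\ra}$ and $c_1^{\la\underline\beta,-\rho_j\sum\nu^p\epsilon_j\ra}$ of $B$ must reappear—and (b) convert the bare $(1-z_1/\omega^pz_2)$ factors into the displayed products $(1-z_1/\omega^pz_2)^{\cdots}(-\omega^pz_2/z_1)^{\cdots}$, giving $X_2X_1=B(z_1,z_2)\cdot(\text{second product in }C)$.

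With both orderings expressed over the common operator $B(z_1,z_2)$, subtraction yields $[X_1,X_2]=B(z_1,z_2)\cdot\big((\text{first product})-(\text{second product})\big)=B(z_1,z_2)C(z_1,z_2)$, as claimed. I expect the genuine obstacle to lie entirely in parts (a) and (b) of the previous paragraph: confirming that the $C$-phase from the group-algebra swap, the two different families of zero-mode powers (the $z_1,c_1$ powers arising in $X_1X_2$ versus the $z_2,c_2$ powers arising in $X_2X_1$), and the reindexed contraction factors all cancel and recombine so that the single operator $B(z_1,z_2)$ genuinely factors out of both products. This is where the definitions of $C(\alpha,\beta)$ and $\varepsilon_C$ and the zero-mode identities of Lemma~\ref{lem:2.2} must be used in concert; once it is settled, the remainder is the routine collection of terms.
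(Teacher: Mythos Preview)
Your proposal is correct and is exactly the approach the paper has in mind: the paper does not spell out the argument but simply states that the lemma follows ``by applying Lemma~\ref{lem:2.2} and a straightforward computation,'' which is precisely the normal-ordering procedure you describe (move $E^+$ past $E^-$ to produce the contraction factors, commute zero-mode powers past $e_{\underline\beta}$ or $e_{\underline\alpha}$, and use $e_\alpha e_\beta=C(\alpha,\beta)e_\beta e_\alpha$ to reconcile the two orderings). Your identification of the bookkeeping in steps (a) and (b) as the only nontrivial point is accurate, and it does go through using the symmetry $\la\underline\alpha,\sum\nu^p\underline\beta\ra=\la\underline\beta,\sum\nu^p\underline\alpha\ra$ together with the explicit form of $C(\underline\beta,\underline\alpha)$.
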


\begin{lem}\label{lem:4.3} Let $r\in \mathbb Z_m$,
if $\la\rho_i\epsilon_i,\rho_k\nu^r\epsilon_k\ra=-1$. Then
\begin{equation}\label{eq:4.3}\begin{split}
B(z_1,z_2)|_{z_2=
\omega^{-r}z_1}=c_1^{\la\rho_i\epsilon_i,\rho_j\sum\nu^p\epsilon_j\ra}
\varepsilon_C(\underline{\alpha},\nu^r\underline{\beta})
\eta(r,\underline{\beta})X_{-\rho_jj,\rho_ll_r}(c_1^{-1}c_2,c_1z_1);
\end{split}\end{equation}

If $\la\rho_j\epsilon_j,\rho_k\nu^r\epsilon_k\ra=1$, then
\begin{equation}\label{eq:4.4}
B(z_1,z_2)|_{z_2= \omega^{-r}c_1z_1}=
\varepsilon_C(\underline{\alpha},\nu^r\underline{\beta})
\eta(r,\underline{\beta})X_{\rho_ii,\rho_ll_r}(c_1c_2,z_1);
\end{equation}

If $\la\rho_i\epsilon_i,\rho_l\nu^r\epsilon_l\ra=1$, then
\begin{equation}\label{eq:4.5}\begin{split}
B(z_1,z_2)|_{z_2= \omega^{-r}c_2^{-1}z_1}=&
c_1^{\la\rho_i\epsilon_i,\rho_j\sum\nu^p\epsilon_j\ra}c_2^{\la\rho_k\epsilon_k,\rho_l\sum\nu^p\epsilon_l\ra}
\varepsilon_C(\underline{\alpha},\nu^r\underline{\beta})
\\&\cdot \eta(r,\underline{\beta})X_{-\rho_jj,-\rho_kk_r}(c_1^{-1}c_2^{-1},c_1z_1);
\end{split}\end{equation}

If $\la\rho_j\epsilon_j,\rho_l\nu^r\epsilon_l\ra=-1$, then
\begin{equation}\label{eq:4.6}\begin{split}
B(z_1,z_2)|_{z_2= \omega^{-r}c_1c_2^{-1}z_1}=
c_2^{\la\rho_k\epsilon_k,\rho_l\sum\nu^p\epsilon_l\ra}
\varepsilon_C(\underline{\alpha},\nu^r\underline{\beta})
\eta(r,\underline{\beta})X_{\rho_ii,-\rho_kk_r}(c_1c_2^{-1},z_1).
\end{split}\end{equation}\qed
\end{lem}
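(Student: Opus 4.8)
The plan is to establish all four identities by a single mechanism: substitute the prescribed value of $z_2$ directly into the explicit formula for $B(z_1,z_2)$ of Lemma \ref{lem:4.3.}, then match the result factor by factor against the defining expression for the relevant operator $X$. I will carry out \eqref{eq:4.3} as the representative case. The hypothesis $\la\rho_i\epsilon_i,\rho_k\nu^r\epsilon_k\ra=-1$ together with $\la\epsilon_i,\epsilon_i\ra=\la\nu^r\epsilon_k,\nu^r\epsilon_k\ra=1$ (as $\nu$ is an isometry) forces $\rho_k\nu^r\epsilon_k=-\rho_i\epsilon_i$, which is precisely the contraction making the factor $(1-\omega^rz_2/z_1)^{\rho_i\rho_k\la\epsilon_i,\nu^r\epsilon_k\ra}$ of $C(z_1,z_2)$ singular at $z_2=\omega^{-r}z_1$. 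The remaining three identities correspond to the three other contractions (the leg $-\rho_j\epsilon_j$ against $\rho_k\nu^r\epsilon_k$, the leg $\rho_i\epsilon_i$ against $-\rho_l\nu^r\epsilon_l$, and $-\rho_j\epsilon_j$ against $-\rho_l\nu^r\epsilon_l$), and are handled verbatim after the obvious relabelling of indices and the corresponding rescaling of $z_2$ by $c_1$, $c_2^{-1}$, $c_1c_2^{-1}$.

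First I would dispose of the exponential factors. Setting $z_2=\omega^{-r}z_1$ and applying $E^\pm(\gamma,\omega^{-r}w)=E^\pm(\nu^r\gamma,w)$ from Lemma \ref{lem:2.2} to the factors of $B$ carrying $z_2$, the factors $E^\pm(\rho_k\epsilon_k,z_2)$ become $E^\pm(\rho_k\nu^r\epsilon_k,z_1)=E^\pm(-\rho_i\epsilon_i,z_1)$, and $E^\pm(-\rho_l\epsilon_l,c_2z_2)$ become $E^\pm(-\rho_l\nu^r\epsilon_l,c_2z_1)$. Since the $E^-$ factors (resp. the $E^+$ factors) mutually commute and are additive in their root argument, the product $E^-(\rho_i\epsilon_i,z_1)E^-(-\rho_i\epsilon_i,z_1)=E^-(0,z_1)=1$ collapses, and likewise for $E^+$. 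What survives is exactly the four factors $E^-(-\rho_j\epsilon_j,c_1z_1)E^-(-\rho_l\nu^r\epsilon_l,c_2z_1)E^+(-\rho_j\epsilon_j,c_1z_1)E^+(-\rho_l\nu^r\epsilon_l,c_2z_1)$ appearing in $X_{-\rho_jj,\rho_ll_r}(c_1^{-1}c_2,c_1z_1)$, using $\rho_l\epsilon_{l_r}=\rho_l\nu^r\epsilon_l$ and $(c_1^{-1}c_2)(c_1z_1)=c_2z_1$.

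The conceptual heart is the group-algebra factor $e_{\underline\alpha}e_{\underline\beta}$ together with the operator $z_2^{\sum\nu^p\underline\beta+\la\underline\beta,\sum\nu^p\underline\beta\ra/2}$. Substituting $z_2=\omega^{-r}z_1$ splits this operator as $\omega^{-r(\sum\nu^p\underline\beta+\la\underline\beta,\sum\nu^p\underline\beta\ra/2)}z_1^{\sum\nu^p\underline\beta+\la\underline\beta,\sum\nu^p\underline\beta\ra/2}$, and the first ($\omega$-)factor is precisely the scalar operator by which $\hat\nu^r$ acts: iterating the first identity of Lemma \ref{lem:2.2} gives $e_{\underline\beta}\,\omega^{-r(\sum\nu^p\underline\beta+\la\underline\beta,\sum\nu^p\underline\beta\ra/2)}=\hat\nu^r(e_{\underline\beta})=\eta(r,\underline\beta)e_{\nu^r\underline\beta}$, the last step being \eqref{eq:c3}. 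Combining with $e_{\underline\alpha}e_{\nu^r\underline\beta}=\varepsilon_C(\underline\alpha,\nu^r\underline\beta)e_{\underline\alpha+\nu^r\underline\beta}$ reproduces both the prefactor $\varepsilon_C(\underline\alpha,\nu^r\underline\beta)\eta(r,\underline\beta)$ and the correct element $e_{\underline\alpha+\nu^r\underline\beta}=e_{-\rho_j\epsilon_j-\rho_l\nu^r\epsilon_l}$ of the target, where $\underline\alpha+\nu^r\underline\beta$ is evaluated using $\rho_k\nu^r\epsilon_k=-\rho_i\epsilon_i$.

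It then remains to match the scalar and operator powers of $z_1,c_1,c_2$. Here I would push every operator-valued power to the left of the group-algebra element via $z^{\sum\nu^p\gamma}e_\beta=e_\beta z^{\sum\nu^p\gamma+\sum\la\nu^p\gamma,\beta\ra}$ and $c^{\sum\nu^p\gamma}e_\beta=e_\beta c^{\sum\nu^p\gamma+\sum\la\nu^p\gamma,\beta\ra}$ of Lemma \ref{lem:2.2}, and then use the $\nu$-invariance of $\sum_p\nu^p(\cdot)$ and the isometry property of $\nu$ to combine, so that the $z_1$-data of $B$ reassemble into the exponent $\sum\nu^p(\underline\alpha+\nu^r\underline\beta)+\la\underline\alpha+\nu^r\underline\beta,\sum\nu^p(\underline\alpha+\nu^r\underline\beta)\ra/2$ demanded by $X$. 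I expect the only genuinely delicate point to be the bookkeeping of the $c_1$- and $c_2$-exponents: tracking the half-integer shifts $\la\epsilon_j,\sum\nu^p\epsilon_j\ra/2$ and $\la\epsilon_l,\sum\nu^p\epsilon_l\ra/2$ through the substitution and the reorderings, and verifying that the surplus collapses to the single explicit factor $c_1^{\la\rho_i\epsilon_i,\rho_j\sum\nu^p\epsilon_j\ra}$ together with the $(c_1^{-1}c_2)$-power prescribed by the definition of $X_{-\rho_jj,\rho_ll_r}$. Once this is confirmed, the right-hand side of \eqref{eq:4.3} is obtained, and the analogous computations yield \eqref{eq:4.4}, \eqref{eq:4.5} and \eqref{eq:4.6}.
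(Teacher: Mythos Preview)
Your proposal is correct and follows precisely the approach the paper indicates: the paper states only that this lemma ``can be shown by applying Lemma~\ref{lem:2.2} and a straightforward computation,'' and your outline is exactly such a computation, carried out in detail. In particular, your key observation---that substituting $z_2=\omega^{-r}z_1$ produces the operator $e_{\underline\beta}\,\omega^{-r(\sum\nu^p\underline\beta+\langle\underline\beta,\sum\nu^p\underline\beta\rangle/2)}$, which by iterating the first identity of Lemma~\ref{lem:2.2} equals $\hat\nu^r(e_{\underline\beta})=\eta(r,\underline\beta)e_{\nu^r\underline\beta}$---is the mechanism that generates the factors $\varepsilon_C(\underline\alpha,\nu^r\underline\beta)\eta(r,\underline\beta)$ and is exactly what the paper leaves implicit.
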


Finally, by definition, one immediate has
\begin{lem}\label{lem:4.4} Let $\alpha,\beta\in Q$ and $r\in \mathbb Z_m$, then
\begin{equation}\label{eq:4.7}
\zeta(\alpha)\zeta(\beta) \varepsilon'(\alpha,\nu^r\beta)^{-1}
\zeta(\alpha+\nu^r\beta)^{-1}
=\prod_{0<p<m}(1-\omega^p)^{-\la\alpha,\nu^p(\nu^r\beta)\ra}. \qed
\end{equation}
\end{lem}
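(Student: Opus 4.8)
The plan is to verify \eqref{eq:4.7} by a direct computation, first reducing to the case $r=0$ and then tracking separately the contributions of the three parts $\{0<p<m/2\}$, $\{p=m/2\}$ and $\{m/2<p<m\}$ of the range $0<p<m$. First I would reduce to $r=0$: since $\nu(Q)=Q$ we may put $\gamma:=\nu^r\beta\in Q$, and since $\zeta(\nu^r\beta)=\zeta(\beta)$ the left-hand side of \eqref{eq:4.7} equals $\zeta(\alpha)\zeta(\gamma)\varepsilon'(\alpha,\gamma)^{-1}\zeta(\alpha+\gamma)^{-1}$ while the right-hand side is $\prod_{0<p<m}(1-\omega^p)^{-\la\alpha,\nu^p\gamma\ra}$. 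Thus it suffices to prove, for all $\alpha,\gamma\in Q$,
\[
\zeta(\alpha)\zeta(\gamma)\varepsilon'(\alpha,\gamma)^{-1}\zeta(\alpha+\gamma)^{-1}=\prod_{0<p<m}(1-\omega^p)^{-\la\alpha,\nu^p\gamma\ra}.
\]
I would then split $\zeta=\zeta'\cdot D$, where $D(\alpha)=2^{\la\alpha,\nu^{m/2}\alpha\ra/2}$ when $m$ is even and $D\equiv 1$ when $m$ is odd; assumption (A5) guarantees the exponent of $D$ is an integer.

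Next I would compute the $\zeta'$- and $\varepsilon'$-contributions and combine them. Bilinearity gives
\[
\frac{\zeta'(\alpha)\zeta'(\gamma)}{\zeta'(\alpha+\gamma)}=\prod_{0<p<m/2}(1-\omega^p)^{-\la\alpha,\nu^p\gamma\ra-\la\gamma,\nu^p\alpha\ra},
\]
and since $\nu$ is an isometry with $\nu^m=\mathrm{Id}$ we have $\la\gamma,\nu^p\alpha\ra=\la\alpha,\nu^{m-p}\gamma\ra$. For the cocycle factor, reindexing $p\mapsto m-q$ in the definition of $\varepsilon'$ and invoking the elementary identity $1-\omega^{-q}=-\omega^{-q}(1-\omega^q)$, equivalently $-\omega^{m-q}=(1-\omega^{m-q})/(1-\omega^q)$, rewrites
\[
\varepsilon'(\alpha,\gamma)^{-1}=\prod_{0<q<m/2}(1-\omega^{m-q})^{-\la\alpha,\nu^{m-q}\gamma\ra}(1-\omega^q)^{\la\alpha,\nu^{m-q}\gamma\ra}.
\]
Multiplying the last two displays, the exponents of $(1-\omega^p)$ carrying $\la\alpha,\nu^{m-p}\gamma\ra$ cancel, leaving $\prod_{0<p<m/2}(1-\omega^p)^{-\la\alpha,\nu^p\gamma\ra}(1-\omega^{m-p})^{-\la\alpha,\nu^{m-p}\gamma\ra}$, which is precisely $\prod_{0<p<m,\,p\ne m/2}(1-\omega^p)^{-\la\alpha,\nu^p\gamma\ra}$, since the pairs $\{p,m-p\}$ for $0<p<m/2$ exhaust $\{1,\dots,m-1\}$ apart from the middle index.

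It then remains to produce the missing $p=m/2$ factor. When $m$ is odd this index does not occur and $D\equiv 1$, so the two sides already agree. When $m$ is even, $\omega^{m/2}=-1$ forces $1-\omega^{m/2}=2$, so the missing factor is $2^{-\la\alpha,\nu^{m/2}\gamma\ra}$; I would then check that the $D$-part supplies exactly this, i.e.\ $D(\alpha)D(\gamma)/D(\alpha+\gamma)=2^{-\la\alpha,\nu^{m/2}\gamma\ra}$, by using that $\nu^{m/2}$ is self-inverse and that the form is symmetric to reduce the exponent $\tfrac12\bigl(\la\alpha,\nu^{m/2}\alpha\ra+\la\gamma,\nu^{m/2}\gamma\ra-\la\alpha+\gamma,\nu^{m/2}(\alpha+\gamma)\ra\bigr)$ to $-\la\alpha,\nu^{m/2}\gamma\ra$. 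Combining the three contributions gives the claim. The main obstacle is purely bookkeeping: one must split $0<p<m$ correctly, pair each $p$ with $m-p$, and apply $1-\omega^{-q}=-\omega^{-q}(1-\omega^q)$ so that the sign factors $-\omega^p$ concealed in $\varepsilon'$ convert cleanly into $(1-\omega^{m-q})$ terms. The purpose of the extra $2$-power in the definition of $\zeta$ is exactly to absorb the isolated middle index $p=m/2$ in the even case.
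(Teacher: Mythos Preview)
Your proof is correct and is exactly the direct verification from the definitions of $\zeta$, $\zeta'$ and $\varepsilon'$ that the paper has in mind; the paper itself merely asserts that the identity is immediate from these definitions and omits the computation. Your bookkeeping---reducing to $r=0$ via $\zeta(\nu^r\beta)=\zeta(\beta)$, pairing $p$ with $m-p$, using $-\omega^{-q}=(1-\omega^{m-q})/(1-\omega^q)$ to convert the $\varepsilon'$ factors, and handling the middle index $p=m/2$ with the $2$-power in $\zeta$---is precisely what is needed.
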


\subsection{Proof of Theorem \ref{thm:4.1}} To prove the first part of the theorem, we only need to
prove that the commutator relation for the twisted $\Gamma$-vertex
operators $Y_{\rho_ii,\rho_jj}(c_1,z_1)$ with
$Y_{\rho_kk,\rho_ll}(c_2,z_2)$ is the same as (\ref{eq:cr}) under
the correspondence $Y_{\rho_ii,\rho_jj}(c,z)\rightarrow
G_{\rho_ii,\rho_jj}(c,z)$, for
$(\rho_ii,\rho_jj),(\rho_kk,\rho_ll)\in \mathcal{J}$ and
$c_1,c_2,c\in \Gamma$. From the definition of $Y_{\rho_ii,\rho_jj}(c,z)$, $X_{\rho_ii,\rho_jj}(c,z)$, and note that the identity given in Lemma \ref{lem:4.3.}, we need to work on the product $B(z_1,z_2)\cdot C(z_1,z_2)$. For $C(z_1,z_2)$, we apply Proposition \ref{prop:3.6} to rewrite it into a summation of $\delta$ and $D(\delta)$ functions. For this purpose we divide the argument into the following seven cases.

Case $1$: $c_1\ne 1,c_2\ne 1,c_1c_2\ne 1$ and $c_1\ne c_2$;

Case $2$: $c_1=1$ and $c_2\ne 1$;

Case $3$: $c_2=1$ and $c_1\ne 1$;

Case $4$: $c_1=c_2=1$;

Case $5$: $c_1\ne 1,c_2\ne 1, c_1c_2=1$ and $c_1\ne c_2$;

Case $6$: $c_1\ne 1,c_2\ne 1, c_1=c_2$ and $c_1c_2\ne 1$;

Case $7$: $c_1\ne 1,c_2\ne 1, c_1c_2=1$ and $c_1=c_2$. \\

 For Case $1$. Recall that $\Gamma$ is generic, which implies that the numbers
$\omega^p,\omega^pc_1^{-1},$ $\omega^pc_2,
 \omega^pc_2c_1^{-1}, p\in \mathbb{Z}_m$ are distinct. Thus, by applying
Proposition \ref{prop:3.6}, we obtain

\begin{align*}
C(z_1,z_2) =&\sum_{r\in
\mathbb{Z}_m}\big[\delta_{\rho_ii,-\rho_kk_r}H_{1,r}
\delta(\omega^rz_2/z_1)
+\delta_{\rho_jj,\rho_kk_r}H_{2,r}\delta(\omega^rz_2/c_1z_1)\\
&+\delta_{\rho_ii,\rho_ll_r}H_{3,r} \delta(\omega^rc_2z_2/z_1)
+\delta_{\rho_jj,-\rho_ll_r}H_{4,r}
\delta(\omega^rc_2z_2/c_1z_1)\big],
\end{align*}
where
\begin{align*}
H_{1,r}=&\prod_{0<p<m}(1-\omega^p)^{\rho_i\rho_k\la\epsilon_i,\nu^{p+r}\epsilon_k\ra}
\prod_{p\in \mathbb{Z}_m}(1-\omega^p/c_1)^{-\rho_j\rho_k\la\epsilon_j,\nu^{p+r}\epsilon_k\ra}\\
&\cdot
(1-\omega^pc_2)^{-\rho_i\rho_l\la\epsilon_i,\nu^{p+r}\epsilon_l\ra}
(1-\omega^pc_2/c_1)^{\rho_j\rho_l\la\epsilon_j,\nu^{p+r}\epsilon_l\ra},
\end{align*}
\begin{align*}
H_{2,r}=&\prod_{0<p<m}(1-\omega^p)^{-\rho_j\rho_k\la\epsilon_j,\nu^{p+r}\epsilon_k\ra}
\prod_{p\in \mathbb{Z}_m}(1-\omega^pc_1)^{\rho_i\rho_k\la\epsilon_i,\nu^{p+r}\epsilon_k\ra}\\
&\cdot
(1-\omega^pc_1c_2)^{-\rho_i\rho_l\la\epsilon_i,\nu^{p+r}\epsilon_l\ra}
(1-\omega^pc_2)^{\rho_j\rho_l\la\epsilon_j,\nu^{p+r}\epsilon_l\ra},
\end{align*}
\begin{align*}
H_{3,r}=&\prod_{0<p<m}(1-\omega^p)^{-\rho_i\rho_l\la\epsilon_i,\nu^{p+r}\epsilon_l\ra}
\prod_{p\in \mathbb{Z}_m}(1-\omega^p/c_2)^{\rho_i\rho_k\la\epsilon_i,\nu^{p+r}\epsilon_k\ra}\\
&\cdot
(1-\omega^p/c_1c_2)^{-\rho_j\rho_k\la\epsilon_j,\nu^{p+r}\epsilon_k\ra}
(1-\omega^p/c_1)^{\rho_j\rho_l\la\epsilon_j,\nu^{p+r}\epsilon_l\ra},
\end{align*}
\begin{align*}
H_{4,r}=&\prod_{0<p<m}(1-\omega^p)^{\rho_j\rho_l\la\epsilon_j,\nu^{p+r}\epsilon_l\ra}
\prod_{p\in \mathbb{Z}_m}(1-\omega^pc_1/c_2)^{\rho_i\rho_k\la\epsilon_i,\nu^{p+r}\epsilon_k\ra}\\
&\cdot
(1-\omega^p/c_2)^{-\rho_j\rho_k\la\epsilon_j,\nu^{p+r}\epsilon_k\ra}
(1-\omega^pc_1)^{-\rho_i\rho_l\la\epsilon_j,\nu^{p+r}\epsilon_l\ra}.
\end{align*}

Therefore, by using identities (\ref{eq:4.1})and  (\ref{eq:4.3}-\ref{eq:4.7}), we have
\begin{align*} &[Y_{\rho_ii,\rho_jj}(c_1,z_1),Y_{\rho_kk,\rho_ll}(c_2,z_2)]\\
=&m^{-2}\sum_{r\in
\mathbb{Z}_m}\zeta(\underline{\alpha})\zeta(\underline{\beta})
\kappa(\rho_ii,\rho_jj,c_1)\kappa(\rho_kk,\rho_ll,c_2)
\big[\delta_{\rho_ii,-\rho_kk_r}H_{1,r} B(z_1,\omega^{-r}z_1)\\
&\cdot\delta(\omega^rz_2/z_1)
+\delta_{\rho_jj,\rho_kk_r}H_{2,r}B(z_1,\omega^{-r}c_1z_1)\delta(\omega^rz_2/c_1z_1)
+\delta_{\rho_ii,\rho_ll_r}H_{3,r}\\ &\cdot
B(z_1,\omega^{-r}c_2^{-1}z_1)\delta(\omega^rc_2z_2/z_1)
+\delta_{\rho_jj,-\rho_ll_r}H_{4,r}
B(z_1,\omega^{-r}c_1c_2^{-1}z_1)\delta(\omega^rc_2z_2/c_1z_1)\big]\\
=&m^{-1}\sum_{r\in \mathbb{Z}_m}
\kappa(\rho_ii,\rho_jj,c_1)\kappa(\rho_kk,\rho_ll,c_2)
\big(\prod_{0<p<m}(1-\omega^p)^{-\la\underline{\alpha},\sum\nu^p(\nu^r\underline{\beta})\ra}\big)\\
&\cdot\big[\delta_{\rho_ii,-\rho_kk_r}H_{1,r}c_1^{\la\rho_i\epsilon_i,\rho_j\sum\nu^p\epsilon_j\ra}
\kappa(-\rho_jj,\rho_ll_r,c_2c_1^{-1})^{-1}
\xi_r(\underline{\alpha},\underline{\beta})Y_{-\rho_jj,\rho_ll_r}(c_1^{-1}c_2,c_1z_1)
\\&\cdot\delta(\omega^rz_2/z_1)+\delta_{\rho_jj,\rho_kk_r}H_{2,r}\kappa(\rho_ii,\rho_ll_r,c_1c_2)^{-1}
\xi_r(\alpha,\beta)Y_{\rho_ii,\rho_ll_r}(c_1c_2,z_1)
\delta(\omega^rz_2/c_1z_1)\\
&+\delta_{\rho_ii,\rho_ll_r}H_{3,r}c_1^{\la\rho_i\epsilon_i,\rho_j\sum\nu^p\epsilon_j\ra}
c_2^{\la\rho_k\epsilon_k,\rho_l\sum\nu^p\epsilon_l\ra}
\kappa(-\rho_jj,-\rho_kk_r,c_1^{-1}c_2^{-1})^{-1}\\
&\cdot
\xi_r(\underline{\alpha},\underline{\beta})Y_{-\rho_jj,-\rho_kk_r}(c_1^{-1}c_2^{-1},c_1z_1)
\delta(\omega^rc_2z_2/z_1)+\delta_{-\rho_jj,\rho_ll_r}H_{4,r}c_2^{\la\rho_k\epsilon_k,\rho_l\sum\nu^p\epsilon_l\ra}\\
&
\cdot \kappa(\rho_ii,-\rho_kk_r,c_1c_2^{-1})^{-1}
\xi_r(\underline{\alpha},\underline{\beta})Y_{\rho_ii,-\rho_kk_r}(c_1c_2^{-1},z_1)
\delta(\omega^rc_2z_2/c_1z_1)\big].
\end{align*}

 Comparing the above commutator relation with (\ref{eq:cr}), we see that
 the result for case $1$ follows from the the following identities, which can be checked directly.

\begin{equation*}
\kappa(\rho_ii,\rho_jj,c_1)\kappa(\rho_kk,\rho_ll,c_2)
\prod_{0<p<m}(1-\omega^p)^{-\la\underline{\alpha},\nu^p(\nu^r\underline{\beta})\ra}
\end{equation*}
\begin{equation*}
=\begin{cases}
(-1)^{\delta_{ij}}c_1^{\la\rho_i\epsilon_j,-\rho_j\sum\nu^p\epsilon_i\ra}
\kappa(-\rho_jj,\rho_ll_r,c_2c_1^{-1})H_{1,r}^{-1},\  &\text{if } \rho_ii+\rho_kk_r=0,\\
\kappa(\rho_ii,\rho_ll_r,c_1c_2)H_{2,r}^{-1},\ &\text{if } \rho_jj=\rho_kk_r,\\
 (-1)^{\delta_{ij}+\delta_{kl}}c_1^{-\la\rho_j\epsilon_j,\rho_i\sum\nu^p\epsilon_i\ra}
 c_2^{-\la\rho_l\epsilon_l,\rho_k\sum\nu^p\epsilon_k\ra}& \\
\cdot \kappa(-\rho_jj,-\rho_kk_r,c_1^{-1}c_2^{-1})H_{3,r}^{-1},\ &\text{if } \rho_ii=\rho_ll_r,\\
(-1)^{\delta_{kl}}c_2^{-\la\rho_k\epsilon_k,\rho_l\sum\nu^p\epsilon_l\ra}
\kappa(\rho_ii,-\rho_kk_r,c_1c_2^{-1})H_{4,r}^{-1},\ &\text{if }
\rho_jj+\rho_ll_r=0.
\end{cases}\end{equation*}

 For Case $2$. We divide the proof of this case into two subcases.
First we consider the subcase for $\rho_ii=\rho_jj$. Since $c_1=1$, we
have $Y_{\rho_ii,\rho_jj}(1,z_1)=\rho_i\epsilon_i(z_1)$ and
$\underline{\alpha}=0$. The proof of this subcase is straightforward, and is omitted for shortness.

Next, we consider the other subcase for $\rho_ii\neq\rho_jj$.
If moreover $\rho_ii+\rho_jj=0$, by using (\ref{eq:R1}), we see that the result follows from the fact
$Y_{\rho_ii,\rho_jj}(1,z_1)=0$. On the other hand, if $\rho_ii+\rho_jj\not=0$, then $i\ne j$, and hence $|\la\underline{\alpha},\rho_k\nu^p\epsilon_k\ra|\le 1$ and
$|\la\underline{\alpha},\rho_l\nu^p\epsilon_l\ra|\le 1$. Together with (\ref{eq:3.15}) this gives
\begin{align*}
&C(z_1,z_2)
\\=&\sum_{r\in
\mathbb{Z}_m}(\delta_{\rho_ii,-\rho_kk_r}+\delta_{\rho_jj,\rho_kk_r})
L_{1,r}\delta(\omega^rz_2/z_1)+(\delta_{\rho_ii,\rho_ll_r}+\delta_{-\rho_jj,\rho_ll_r})
L_{2,r}\delta(\omega^rc_2z_2/z_1),
\end{align*}
where
\begin{align*}
L_{1,r}&=\prod_{0<p<m}(1-\omega^p)^{\la\underline{\alpha},\rho_k\nu^{p+r}\epsilon_k\ra}
 \prod_{p\in\mathbb{Z}_m}(1-\omega^pc_2)^{-\la\underline{\alpha},\rho_l\nu^{p+r}\epsilon_l\ra},\\
 L_{2,r}&=\prod_{0<p<m}(1-\omega^p)^{-\la\underline{\alpha},\rho_l\nu^{p+r}\epsilon_l\ra}
 \prod_{p\in \mathbb{Z}_m} (1-\omega^pc_2^{-1})^{\la\underline{\alpha},\rho_k\nu^{p+r}\epsilon_k\ra}.
 \end{align*}

Similar to the proof in Case $1$, by applying Lemmas \ref{lem:4.3},\ref{lem:4.4} and (\ref{eq:4.1}), we get
\begin{equation*}\begin{split}
&[Y_{\rho_ii,\rho_jj}(1,z_1),Y_{\rho_kk,\rho_ll}(c_2,z_2)]\\
&=m^{-1}\sum_{r\in \mathbb{Z}_m}\kappa(\rho_kk,\rho_ll,c_2)
\prod_{0<p<m}(1-\omega^p)^{-\la\alpha,\nu^p(\nu^r\beta)\ra}\bigg\{L_{1,r}\xi_r(\underline{\alpha},\underline{\beta})\delta(\omega^rz_2/z_1)\\
&\cdot\big[\delta_{\rho_ii,-\rho_kk_r}
\kappa(-\rho_jj,\rho_ll_r,c_2)^{-1} Y_{-\rho_jj,\rho_ll_r}(c_2,z_1)
+\delta_{\rho_jj,\rho_kk_r}\kappa(\rho_ii,\rho_ll_r,c_2)^{-1}
Y_{\rho_ii,\rho_ll_r}(c_2,z_1)\big]
\\
&+L_{2,r}c_2^{\la\rho_k\epsilon_k,\rho_l\sum\nu^p\epsilon_l\ra}
 \xi_r(\underline{\alpha},\underline{\beta})
\delta(\omega^rc_2z_2/z_1)\big[\delta_{\rho_ii,\rho_ll_r}
\kappa(-\rho_jj,-\rho_kk_r,c_2^{-1})^{-1}\\&
\cdot Y_{-\rho_jj,-\rho_kk_r}(c_2^{-1},z_1)
+\delta_{-\rho_jj,\rho_ll_r}
\kappa(\rho_ii,-\rho_kk_r,c_2^{-1})^{-1}
 Y_{\rho_ii,-\rho_kk_r}(c_2^{-1},z_1)\big)]\bigg\}.
\end{split}\end{equation*}

A direct computation shows the following identity.
\begin{equation*}\begin{split}
&\prod_{0<p<m}(1-\omega^p)^{-\la\alpha,\nu^p(\nu^r\beta)\ra}\kappa(\rho_kk,\rho_ll,c_2)\\
=&\begin{cases} \kappa(-\rho_jj,\rho_ll_r,c_2)L_{1,r}^{-1},\
&\text{if}\ \rho_ii+\rho_kk_r=0,\\
\kappa(\rho_ii,\rho_ll_r,c_2)L_{1,r}^{-1},\
&\text{if}\ \rho_jj=\rho_kk_r,\\
(-1)^{\delta_{kl}}c_2^{-\la\rho_k\epsilon_k,\rho_l\sum\nu^p\epsilon_l\ra}
\kappa(-\rho_jj,-\rho_kk_r,c_2^{-1})L_{2,r}^{-1},\ &\text{if}\ \rho_ii=\rho_ll_r,\\
(-1)^{\delta_{kl}}c_2^{-\la\rho_k\epsilon_k,\rho_l\sum\nu^p\epsilon_l\ra}
\kappa(\rho_ii,-\rho_kk_r,c_2^{-1})L_{2,r}^{-1},\ &\text{if}\
\rho_jj+\rho_ll_r=0.
\end{cases}\end{split}
\end{equation*}

The result for Case 2 then follows from the above two identities. It is clear that the result for Case 3 follows from Case 2 and Proposition \ref{prop:2.3}.

For Case $4$. We divide the argument into two subcases. The first one is for
$\rho_ii=\rho_jj$ or $\rho_kk=\rho_ll$, the other one is for
$\rho_ii\neq\rho_jj$ and $\rho_kk\neq\rho_ll$. In fact, if $\rho_ii=\rho_jj$
or $\rho_kk=\rho_ll$, then the proof is the same as that for Case 2.
Next we suppose $\rho_ii\neq\rho_jj$ and
$\rho_kk\neq\rho_ll$. If moreover $\rho_ii+\rho_jj=0$ or $\rho_kk+\rho_ll=0$,
then by (\ref{eq:R1}), we see that $Y_{\rho_ii,\rho_jj}(1,z_1)=0$
or $Y_{\rho_kk,\rho_ll}(1,z_2)=0$. Otherwise, $i\ne j$ and $k\ne l$, then
our vertex operator $Y_{\rho_ii,\rho_jj}(1,z)$ coincides with the one
defined in \cite{L}, and thus Theorem \ref{thm:4.1} follows from
Theorem 8.2 in \cite{L}.

 For Case 5.
In this case, the numbers
$\omega^p,\omega^pc_1^{-1},\omega^pc_1^{-1}c_2, p\in \mathbb{Z}_m$
are distinct. For $t=0,\pm 1,\pm 2$, we set $M(t)=\{r\in \mathbb{Z}_m|
-\la\rho_j\epsilon_j,\rho_k\nu^r\epsilon_k\ra
-\la\rho_i\epsilon_i,\rho_l\nu^p\epsilon_l\ra=t\}$. Then by
(\ref{eq:3.15}), we get
\begin{align*}&C(z_1,z_2)
=\sum_{r\in
\mathbb{Z}_m}[\delta_{\rho_ii,-\rho_kk_r}H_{1,r}\delta(\omega^rz_2/z_1)
+\delta_{\rho_jj,-\rho_ll_r}H_{4,r}\delta(\omega^rc_2z_2/c_1z_1)]+\\
&\sum_{r\in M(-1)}N_{1,r}
\delta(\omega^rz_2/c_1z_1) +\sum_{r\in
M(-2)}N_{1,r}[(D\delta)(\omega^rz_2/c_1z_1)+N_{2,r}\delta(\omega^rz_2/c_1z_1)],
\end{align*}
where
\begin{align*}
N_{1,r}=&
\prod_{0<p<m}(1-\omega^p)^{-\la\rho_j\epsilon_j,\rho_k\nu^{p+r}\epsilon_k\ra-\la\rho_i\epsilon_i,
\rho_l\nu^{p+r}\epsilon_l\ra}\\
&\cdot \prod_{p\in
\mathbb{Z}_m}(1-\omega^pc_1)^{\la\rho_i\epsilon_i,\rho_k\nu^{p+r}\epsilon_k\ra}
(1-\omega^p/c_1)^{\la\rho_j\epsilon_j,\rho_l\nu^{p+r}\epsilon_l\ra},\\
N_{2,r}=&1-\sum_{p\in
\mathbb{Z}_m}\left(\frac{\la\rho_i\epsilon_i,\rho_k\nu^p\epsilon_k\ra}
{1-\omega^{r-p}c_1^{-1}}+\frac{\la\rho_j\epsilon_j,\rho_l\nu^p\epsilon_l\ra}
{1-\omega^{r-p}c_1}\right)\\
&-\sum_{p\ne r\in \mathbb{Z}_m}
\frac{-\la\rho_j\epsilon_j,\rho_k\nu^p\epsilon_k\ra-\la\rho_i\epsilon_i,\rho_l\nu^p\epsilon_l\ra}
{1-\omega^{r-p}}
\end{align*}
and $H_{i,r},$ for $i=1,4$ are defined in the proof for Case $1$. This implies
$$
[Y_{\rho_ii,\rho_jj}(c_1,z_1),Y_{\rho_kk,\rho_ll}(c_2,z_2)]
=O_1+O_2+O_3+O_4,
$$
where
\begin{align*} &O_1=D(z_1,z_2)\sum_{r\in \mathbb{Z}_m}\delta_{\rho_ii,-\rho_kk_r}H_{1,r}\delta(\omega^rz_2/z_1),\\&
O_2=D(z_1,z_2)\sum_{r\in \mathbb Z_m} \delta_{\rho_jj,-\rho_ll_r}H_{4,r}\delta(\omega^rc_2z_2/c_1z_1),\\&
O_3=D(z_1,z_2)\sum_{r\in M(-1)}N_{1,r}\delta(\omega^rz_2/c_1z_1),\\
&O_4=D(z_1,z_2)\sum_{r\in
M(-2)}N_{1,r}[(D\delta)(\omega^rz_2/c_1z_1)+N_{2,r}\delta(\omega^rz_2/c_1z_1)],
\end{align*}
and
$
D(z_1,z_2)=m^{-2}\zeta(\underline{\alpha})\zeta(\underline{\beta})
\kappa(\rho_ii,\rho_jj,c_1)\kappa(\rho_kk,\rho_ll,c_2)B(z_1,z_2).
$

Similar to the proof given in Case $1$, we have
\begin{align}
O_1&=m^{-1}\sum_{r\in
\mathbb{Z}_m}\delta_{\rho_ii,-\rho_kk_r}(-1)^{\delta_{ij}}
\xi_r(\underline{\alpha},\underline{\beta})Y_{-\rho_jj,\rho_ll_r}(c_1^{-2},c_1z_1)
\delta(\omega^rz_2/z_1),\label{eq:O1}\\\
O_2&=m^{-1}\sum_{r\in
\mathbb{Z}_m}\delta_{-\rho_jj,\rho_ll_r}(-1)^{\delta_{kl}}
\xi_r(\underline{\alpha},\underline{\beta})Y_{\rho_ii,-\rho_kk_r}(c_1^2,z_1)
\delta(\omega^rz_2/c_1^2z_1).\label{eq:O2}
\end{align}

For the term $O_3$, note that $r\in M(-1)$, we know that either $\rho_ii_r=\rho_ll_r$ or
$\rho_jj=\rho_kk_r$. If $\rho_ii_r=\rho_ll_r$, then
$N_{1,r}=H_{3,r}$ as
$(1-\omega^p)^{-\la\rho_j\epsilon_j,\rho_k\nu^r\epsilon_k\ra}=1$.
Similarly, if $\rho_jj=\rho_kk_r$, then $N_{1,r}=H_{2,r}$.
Thus, by the same proof as that of Case $1$, we have
\begin{equation}\label{eq:O3}\begin{split}
&O_3=m^{-1}\sum_{r\in M(-1)}\delta_{\rho_jj,\rho_kk_r}
\xi_r(\underline{\alpha},\underline{\beta})Y_{\rho_ii,\rho_ll_r}(1,z_1)
\delta(\omega^rz_2/c_1z_1)\\
&+m^{-1}\sum_{r\in
M(-1)}\delta_{\rho_ii,\rho_ll_r}(-1)^{\delta_{ij}+\delta_{kl}}
\xi_r(\underline{\alpha},\underline{\beta})Y_{-\rho_jj,-,\rho_kk_r}(1,c_1z_1)
\delta(\omega^rz_2/c_1z_1).
\end{split}\end{equation}

Now we consider the term $O_4$. First, by definition, one can see that
 \begin{equation}\label{eq:4.10}N_{2,r}=\frac{1}{2}\la\underline{\beta},\sum\nu^p\underline{\beta}\ra, \text{ if }
 r\in M(-2).
 \end{equation}
  By using (\ref{eq:4.2}) and (\ref{eq:4.4}), we have
\begin{align*} &B(z_1,z_2)(D\delta)(\omega^rz_2/c_1z_1)\\
=&B(z_1,\omega^{-r}c_1z_1)(D\delta)(\omega^rz_2/c_1z_1)
-m(\rho_k\epsilon_k(z_2)-\rho_l\epsilon_l(c_2z_2)\\&+\la\underline{\beta},\sum\nu^p\underline{\beta}\ra/2)
B(z_1,\omega^{-r}c_1z_1)\delta(\omega^rz_2/c_1z_1).
\end{align*}

Together with \eqref{eq:4.10} this gives
 \begin{equation}\label{eq:4.11}\begin{split}
&B(z_1,z_2)[(D\delta)(\omega^rz_2/c_1z_1)+N_{2,r}
\delta(\omega^rz_2/c_1z_1)]\\
=&\varepsilon_C(\underline{\alpha},\nu^r\underline{\beta})\eta(r,\underline{\beta})
[(D\delta)(\omega^rz_2/c_1z_1)+(\rho_i\epsilon_i(z_1)-\rho_i\epsilon_i(c_1z_1))
\delta(\omega^rz_2/c_1z_1)].
\end{split}\end{equation}

It follows from (\ref{eq:4.7}) that
\begin{equation*}
\zeta(\underline{\alpha})\zeta(\underline{\beta})\kappa(\rho_ii,\rho_jj,c_1)
\kappa(\rho_kk,\rho_ll,c_1^{-1})N_{1,r}=\varepsilon'(\underline{\alpha},\nu^r\underline{\beta}),
 \text{ if }r\in M(-2).
\end{equation*}
Together with (\ref{eq:4.11}) this implies
\begin{equation}\label{eq:O4}\begin{split}
O_4=&m^{-1}\sum_{r\in
M(-2)}\xi_r(\underline{\alpha},\underline{\beta})(\rho_i\epsilon_i(z_1)-\rho_j\epsilon_j(c_1z_1))
\delta(\omega^rz_2/c_1z_1)\\
&+m^{-2}\sum_{r\in M(-2)}\xi_r(\underline{\alpha},\underline{\beta})
(D\delta)(\omega^rz_2/c_1z_1).
\end{split}\end{equation}

Combining (\ref{eq:O1}), (\ref{eq:O2}), (\ref{eq:O3}) and (\ref{eq:O4}),
and comparing the expression of $O_1+O_2+O_3+O_4$ with
(\ref{eq:cr}), one sees that the result for the first part of the theorem under the condition of Case 5 follows from the following identity.
\begin{align*}
&O_3+m^{-1}\sum_{r\in
M(-2)}\xi_r(\underline{\alpha},\underline{\beta})(\rho_i\epsilon_i(z_1)-\rho_j\epsilon_j(c_1z_1))
\delta(\omega^rz_2/c_1z_1)\\
=&m^{-1}\sum_{r\in \mathbb{Z}_m}\delta_{\rho_jj,\rho_kk_r}
\xi_r(\underline{\alpha},\underline{\beta})Y_{\rho_ii,\rho_ll_r}(1,z_1)
\delta(\omega^rz_2/c_1z_1)\\
&+m^{-1}\sum_{r\in
\mathbb{Z}_m}\delta_{\rho_ii,\rho_ll_r}(-1)^{\delta_{ij}+\delta_{kl}}
\xi_r(\underline{\alpha},\underline{\beta})Y_{-\rho_jj,-\rho_kk_r}(1,c_1z_1)
\delta(\omega^rz_2/c_1z_1),
\end{align*}

It is also clear that the result for Case 6 follows from Case 5 and Proposition \ref{prop:2.3}. Finally, for Case 7, we note that in this case one has $c_1=c_2=-1$, but this is impossible as the group $\Gamma$ is generic. Therefore, we have finished the proof of the first part of the theorem.
The proof of the second part of the theorem is standard, which is omitted. \qed
\section{Applications}

This section is devoted to the application of Theorem \ref{thm:4.1}. By choosing some special quadruples
$(Q,\nu,m,\Gamma)$, we recover vertex operator representations  presented in \cite{L,G1,G2,BS,BGT,G-KL1,G-KL2,CGJT,CT}.
We also provide a vertex operator
representation for the $BC_{N-1}$-graded Lie algebra
$\widehat{\mathfrak o}_{2N}^{(2)}(\mathbb C_\Gamma)$.

\subsection{Realization of twisted affine Lie algebras.}
Following \cite{L}, we define a Lie algebra $\mathfrak g=\mathcal
H\oplus \sum_{\alpha\in Q'} \mathbb Cx_\alpha$, where $\mathcal
H=P\otimes_\mathbb Z\mathbb C$ and $Q'=\{\alpha\in Q|\la
\alpha,\alpha\ra=2\}$, with Lie bracket
\begin{equation*}\begin{split}
[\mathcal H,\mathcal H]=0,\ [h,x_\alpha]=\la h,\alpha\ra x_\alpha=-[x_\alpha,h],\\
[x_\alpha,x_\beta]=\begin{cases}
\varepsilon(\alpha,-\alpha)\alpha,\ &\text{if}\ \alpha+\beta=0,\\
\varepsilon(\alpha,\beta)x_{\alpha+\beta},\ &\text{if}\ \la\alpha,\beta\ra=-1,\\
0,\ &\text{if}\ \la\alpha,\beta\ra\ge 0,
\end{cases}\end{split}\end{equation*}
for $h\in \mathcal H, \alpha,\beta\in Q'$.
Extend the bilinear form $\la,\ra$ of Cartan
subalgebra $\mathcal H$ of $\mathfrak g$ to be an invariant form on
$\mathfrak g$ by
 $$\la h,x_\alpha\ra=0, \la x_\alpha,x_\beta\ra=\delta_{\alpha+\beta,0}\varepsilon(\alpha,-\alpha),
 \qquad h\in \mathcal H,\alpha,\beta\in Q'.$$
 And extend the linear automorphism $\nu$ of $\mathcal H$ to be a Lie automorphism of
$\mathfrak g$ by
 $$\nu(x_\alpha)=\eta(1,\alpha)x_{\nu (\alpha)},\qquad \alpha\in Q'.$$
 For $n\in \mathbb Z$ and $x\in \mathfrak g$, we
set
$x_{(n)}=m^{-1}\sum_{p\in\mathbb Z_m}\omega^{-np}\nu^p(x)\ \text{and}\ \mathfrak g_{(n)}=\{x_{(n)}|x\in \mathfrak g\}.$
Consider the twisted affine Lie algebra $\widehat{\mathfrak g}(\nu)=\sum_{n\in \mathbb Z} \mathfrak g_{(n)}\otimes t^n \oplus \mathbb C \mathbf c,$
with Lie bracket
\begin{equation*}
[x\otimes t^n,y\otimes t^r]=[x,y]\otimes t^{n+r}+m^{-1}n\la
x,y\ra\delta_{n+r,0}\mathbf c
\end{equation*}
for $x\in \mathfrak g_{(n)}, y\in \mathfrak g_{(r)}$, $n,r\in
\mathbb Z$, and $\mathbf c$ is central.

Take $\Gamma=\{1\}$. One can check that the Lie algebra $\widehat{\mathcal G}(Q,\nu,m,\{1\})$ is
isomorphic to $\widehat{\mathfrak g}(\nu)$ via the isomorphism
 \begin{align*}
\widetilde{e_{i,\rho_jj}}(1,n)\mapsto
(x_{\epsilon_i-\rho_j\epsilon_j})_{(n)}\otimes t^n,\
\widetilde{e_{k,k}}(1,n)\mapsto (\epsilon_k)_{(n)}\otimes t^n,\ \mathbf c\mapsto \mathbf c,
\end{align*}
for $(i,\rho_jj)\in \mathcal J, i\ne j, 1\le k\le N$ and $n\in \mathbb Z$.
Comparing this with Theorem \ref{thm:4.1}, we obtain the following result
which was given in \cite{L}.
\begin{cor}\label{cor:5.1} The generalized Fock space $V_T$ affords a
representation of the $\nu$-twisted affine Lie algebra
$\widehat{\mathfrak g}(\nu)$ with action given by
$$(x_{\epsilon_i-\rho_j\epsilon_j})_{(n)}\otimes t^n\mapsto y_{i,\rho_jj}(1,n),\ (\varepsilon_k)_{(n)}\otimes t^n \mapsto
y_{k,k}(1, n),\ \mathbf c\mapsto 1,$$ for $(i,\rho_jj)\in \mathcal
J, i\ne j, 1\le k\le N$ and $n\in \mathbb Z$. In particular, if $\mathrm{span}_\mathbb ZQ_{(2)}=Q$,
then  $V_T$ is irreducible if and only if the $\mathbb C[Q,\varepsilon_C]$-module $T$ is
irreducible. \qed
\end{cor}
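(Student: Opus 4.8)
The plan is to specialize Theorem~\ref{thm:4.1} to the trivial group $\Gamma=\{1\}$ and to transport the resulting module structure across the isomorphism $\widehat{\mathcal G}(Q,\nu,m,\{1\})\cong\widehat{\mathfrak g}(\nu)$ recorded just before the statement. First I would note that $\{1\}$ is a free abelian group (of rank $0$), so assumption (A1) holds for $\Gamma=\{1\}$, while (A2)--(A5) hold for $(Q,\nu,m)$ by the standing hypotheses; hence $(Q,\nu,m,\{1\})$ satisfies (A1)--(A5) and Theorem~\ref{thm:4.1} applies. It gives $V_T$ a $\widehat{\mathcal G}(Q,\nu,m,\{1\})$-module structure under which $\widetilde{e_{\rho_ii,\rho_jj}}(1,n)$ acts by $y_{\rho_ii,\rho_jj}(1,n)$ and $\mathbf c$ acts by the identity.

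The isomorphism recorded above identifies $\widetilde{e_{i,\rho_jj}}(1,n)$ with $(x_{\epsilon_i-\rho_j\epsilon_j})_{(n)}\otimes t^n$ (for $(i,\rho_jj)\in\mathcal J$, $i\ne j$), $\widetilde{e_{k,k}}(1,n)$ with $(\epsilon_k)_{(n)}\otimes t^n$, and $\mathbf c$ with $\mathbf c$; under this identification the module of the previous paragraph becomes a $\widehat{\mathfrak g}(\nu)$-module on which $(x_{\epsilon_i-\rho_j\epsilon_j})_{(n)}\otimes t^n$ acts by $y_{i,\rho_jj}(1,n)$, $(\epsilon_k)_{(n)}\otimes t^n$ acts by $y_{k,k}(1,n)$, and $\mathbf c$ acts by $1$. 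That is exactly the asserted action. I would also record that, by the very definition of the twisted $\Gamma$-vertex operators, $Y_{k,k}(1,z)=\epsilon_k(z)$, so $y_{k,k}(1,n)$ is simply the Heisenberg operator on $V_T$ attached to $(\epsilon_k)_{(n)}\otimes t^n$, which makes the Cartan part of the identification transparent.

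For the irreducibility statement I would invoke the second half of Theorem~\ref{thm:4.1}. Since $Q_{(2)}=\{\alpha\in Q:\langle\alpha,\alpha\rangle=2\}$ is precisely the lattice denoted $Q'$ there, the hypotheses $\mathrm{span}_{\mathbb Z}Q_{(2)}=Q$ and $\Gamma=\{1\}$ give that the $\widehat{\mathcal G}(Q,\nu,m,\{1\})$-module $V_T$ is irreducible if and only if $T$ is irreducible as a $\mathbb C[Q,\varepsilon_C]$-module; because the identification above is a Lie algebra isomorphism, a subspace of $V_T$ is a $\widehat{\mathfrak g}(\nu)$-submodule exactly when it is a $\widehat{\mathcal G}(Q,\nu,m,\{1\})$-submodule, so the equivalence carries over verbatim. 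I do not expect a genuine obstacle: the corollary is essentially a reading-off of Theorem~\ref{thm:4.1}. The one ingredient that still needs (routine) checking is that the stated map really is a well-defined Lie algebra isomorphism --- namely that the $2$-cocycle $\varepsilon=\varepsilon'\varepsilon_C$ used in the lattice construction of $\mathcal G(Q)$ agrees with the cocycle in the bracket of $\mathfrak g$, that the twisting constants $\eta(1,\cdot)$ intertwine $\bar\nu$ on $\mathcal G(Q)$ with $\nu$ on $\mathfrak g$, and that the generators $\widetilde{e_{\rho_ii,\rho_jj}}(1,n)$ with $\rho_i=-1$ are obtained from the listed ones via relation~\eqref{eq:R1}. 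This is forced by the definitions and is already implicit (indeed stated) in the discussion preceding the corollary, so here it is the bookkeeping, not the mathematics, that is the ``hard part''.
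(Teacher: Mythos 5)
Your proposal is correct and follows exactly the paper's route: the paper, too, obtains Corollary \ref{cor:5.1} by taking $\Gamma=\{1\}$, invoking the isomorphism $\widehat{\mathcal G}(Q,\nu,m,\{1\})\cong\widehat{\mathfrak g}(\nu)$ recorded immediately before the statement, and then ``comparing with Theorem \ref{thm:4.1}''. Your additional remarks (that $\{1\}$ is free abelian of rank $0$, that $Q_{(2)}=Q'$, and that the cocycle/twisting bookkeeping underlies the claimed isomorphism) only make explicit what the paper leaves to the reader with ``one can check''.
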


\subsection{Realization of extended affine Lie algebras of type $A_{N-1}$}
In this section, we  present the homogenous and principal vertex
operator representations of the Lie algebra
$\widehat{\mathfrak{gl}}_N(\mathbb C_q)$. Let $Q=Q(A_{N-1}), N\geq
2, \nu=\mathrm{Id}, m=1$ and $\Gamma=\Gamma_q$ (cf. Section 2.2).
Note that in this case we have
$C(\alpha,\beta)=(-1)^{\la\alpha,\beta\ra}$, thus there is a
$2$-cocycle $\varepsilon^*: P\times P\to \{\pm 1\}$ associated with
$C$ determined  by $$
 \varepsilon^*(\sum m_i\epsilon_i, \sum
 n_j\epsilon_j)=\prod_{i,j}(\varepsilon^*(\epsilon_i,\epsilon_j))^{m_in_j},
 $$
where
 $\varepsilon^*(\epsilon_i,\epsilon_j)=
 1\ \text{if}\ i\leq j\ \text{and}\ =
 -1\ \text{if}\ i>j.$

 Define a $\mathbb C[Q,\varepsilon^*]$-module structure
  and  $\mathcal H_{(0)}$-action on the group algebra
 $\mathbb C[Q]=\oplus_{\alpha\in Q}\mathbb Ce^\alpha$ as follows
\begin{equation}\label{eq:2.12}\begin{split}
e_\alpha. e^\beta=\varepsilon^*(\alpha,\beta)e^{\alpha+\beta},\quad
h.e^\alpha=\la h,\alpha\ra e^\alpha,\ h\in \mathcal H_{(0)}=\mathcal
H,\alpha,\beta\in Q,
\end{split}\end{equation}
which is obviously compatible in the sense of (\ref{eq:2.2}). Thus
we may take $\varepsilon_C=\varepsilon^*$ and let $V_T=\mathbb
C[Q]\otimes S$. One can check that the Lie algebra $\widehat{\mathcal G}(Q(A_{N-1}),\mathrm{Id},1,\Gamma_q)$ is
isomorphic to $\widehat{\mathfrak{gl}}_N(\mathbb C_q)$ via the isomorphism
 \begin{align*}
\varepsilon^*(\epsilon_i,\epsilon_j)\widetilde{e_{i,j}}(q^{\mathbf n},n_0)\mapsto
E_{i,j}t_0^{n_0}t^\mathbf n,\ \mathbf c\mapsto \mathbf c,\ 1\le i,j\le N, (n_0,\mathbf n)\in \mathbb Z^{l+1}.
\end{align*}
Comparing this with Theorem \ref{thm:4.1}, we obtain the following result,
which was given in \cite{G1,BGT}.
\begin{cor} There is an irreducible representation of the Lie
algebra $\widehat{\mathfrak{gl}}_N(\mathbb C_q)$ on $V_{T}=\mathbb
C[Q(A_{N-1})]\otimes S$ in the homogeneous picture. The
representation is given by the mapping
\begin{align*} E_{i,j}t_0^{n_0}t^\mathbf n\mapsto
\varepsilon^*(\epsilon_i,\epsilon_j)y_{i,j}(q^{\mathbf n},n_0),\
\mathbf c\mapsto 1,\ 1\le i,j\le N, n\in \mathbb Z, \mathbf n\in \mathbb Z^l. \qed
\end{align*}
\end{cor}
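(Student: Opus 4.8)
The plan is to deduce the corollary from Theorem~\ref{thm:4.1} by specialising to the quadruple $(Q(A_{N-1}),\mathrm{Id},1,\Gamma_q)$ and then transporting the resulting representation along the isomorphism $\widehat{\mathcal G}(Q(A_{N-1}),\mathrm{Id},1,\Gamma_q)\xrightarrow{\sim}\widehat{\mathfrak{gl}}_N(\mathbb C_q)$ recorded just above. First I would check (A1)--(A5) for this quadruple: (A1) is the standing assumption that $\Gamma_q$ is free abelian of rank $l$; (A2) holds because $Q(A_{N-1})$ is spanned by the roots $\epsilon_i-\epsilon_j$, which have norm $2$ and integral mutual pairings; and (A3)--(A5) are vacuous since $\nu=\mathrm{Id}$, $m=1$. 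With $m=1$ one has $\omega=1$, $m_0=2$, $\omega_0=-1$, and the defining product collapses to $C(\alpha,\beta)=(-1)^{\langle\alpha,\beta\rangle}$, so the algebra $\widehat{\mathcal G}(Q(A_{N-1}),\mathrm{Id},1,\Gamma_q)$ is built from the involutive associative algebra $(\mathcal G(Q(A_{N-1})),\tau)$ with $\bar\nu=\mathrm{Id}$.

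Next I would justify the choice $\varepsilon_C=\varepsilon^*|_{Q\times Q}$. Being bimultiplicative, $\varepsilon^*$ is automatically a normalised $2$-cocycle; the point to verify is $\varepsilon^*(\alpha,\beta)/\varepsilon^*(\beta,\alpha)=(-1)^{\langle\alpha,\beta\rangle}$ for $\alpha,\beta\in Q(A_{N-1})$. Writing $\alpha=\sum m_i\epsilon_i$, $\beta=\sum n_j\epsilon_j$, the pair $(i,j)$ with $i\ne j$ contributes $(-1)^{m_in_j}$ to this ratio and the diagonal contributes $1$, so the ratio equals $(-1)^{\sum_{i\ne j}m_in_j}$; since elements of $Q(A_{N-1})$ have vanishing coordinate sum, this equals $(-1)^{\sum_i m_in_i}=(-1)^{\langle\alpha,\beta\rangle}$, as needed. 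Thus $\varepsilon^*|_{Q\times Q}$ is a legitimate instance of $\varepsilon_C$. I would then verify that $T=\mathbb C[Q(A_{N-1})]$ with the actions \eqref{eq:2.12} satisfies the hypotheses on the module $T$ from Section~\ref{sec 2}: the cocycle identity for $\varepsilon^*$ makes $T$ a $\mathbb C[Q,\varepsilon_C]$-module, the $\mathcal H_{(0)}=\mathcal H$-action yields the weight decomposition $T=\bigoplus_\alpha\mathbb Ce^\alpha$ with $T_{\alpha_{(0)}}=\mathbb Ce^\alpha$, and the compatibility \eqref{eq:2.2} is immediate since $\hat\nu=\mathrm{Id}$ and $\omega=1$ force both displayed relations to hold trivially.

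With these checks done, Theorem~\ref{thm:4.1} produces a representation of $\widehat{\mathcal G}(Q(A_{N-1}),\mathrm{Id},1,\Gamma_q)$ on $V_T=\mathbb C[Q(A_{N-1})]\otimes S$ with $\widetilde{e_{i,j}}(q^{\mathbf n},n_0)\mapsto y_{i,j}(q^{\mathbf n},n_0)$ and $\mathbf c\mapsto 1$ (here $\mathcal J=\{(i,j):1\le i,j\le N\}$, since $\nu=\mathrm{Id}$ and $\epsilon_i+\epsilon_j\notin Q(A_{N-1})$ rule out the signs $\rho_i=-\rho_j$). Composing with the inverse of the stated isomorphism $\varepsilon^*(\epsilon_i,\epsilon_j)\widetilde{e_{i,j}}(q^{\mathbf n},n_0)\mapsto E_{i,j}t_0^{n_0}t^{\mathbf n}$ transports this to the representation of $\widehat{\mathfrak{gl}}_N(\mathbb C_q)$ sending $E_{i,j}t_0^{n_0}t^{\mathbf n}$ to $\varepsilon^*(\epsilon_i,\epsilon_j)\,y_{i,j}(q^{\mathbf n},n_0)$ and $\mathbf c$ to $1$, which is the asserted map. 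For irreducibility I would invoke the second half of Theorem~\ref{thm:4.1}: the spanning hypothesis holds because $Q''=\{\epsilon_i-\epsilon_j\}\cap Q(A_{N-1})$ already spans $Q(A_{N-1})$ (and likewise $Q'$ if $\Gamma_q=\{1\}$), so $V_T$ is irreducible exactly when $T$ is; and $T=\mathbb C[Q(A_{N-1})]$ is irreducible since any nonzero submodule is $\mathcal H_{(0)}$-stable, hence a sum of the one-dimensional weight spaces $\mathbb Ce^\alpha$, and stability under the $e_\beta$ (which carry $\mathbb Ce^\alpha$ onto $\mathbb Ce^{\alpha+\beta}$) forces the set of occurring weights to be all of $Q(A_{N-1})$. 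I expect the only mildly delicate step to be the cocycle verification in the second paragraph—everything else is bookkeeping—together with taking the isomorphism $\widehat{\mathcal G}(Q(A_{N-1}),\mathrm{Id},1,\Gamma_q)\cong\widehat{\mathfrak{gl}}_N(\mathbb C_q)$ on faith, whose own proof amounts to matching the bracket of Proposition~\ref{prop:cr} specialised to $m=1$ against the defining relations of $\widehat{\mathfrak{gl}}_N(\mathbb C_q)$, with the scalars $\varepsilon^*(\epsilon_i,\epsilon_j)$ inserted to normalise the structure constants.
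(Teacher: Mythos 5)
Your proposal is correct and follows the same route as the paper: specialize Theorem \ref{thm:4.1} to the quadruple $(Q(A_{N-1}),\mathrm{Id},1,\Gamma_q)$ with $\varepsilon_C=\varepsilon^*$ and $T=\mathbb C[Q(A_{N-1})]$, then transport the resulting module along the isomorphism $\widehat{\mathcal G}(Q(A_{N-1}),\mathrm{Id},1,\Gamma_q)\cong\widehat{\mathfrak{gl}}_N(\mathbb C_q)$, using the spanning of $Q''$ and the irreducibility of $\mathbb C[Q(A_{N-1})]$ for the irreducibility claim. The details you fill in (the cocycle ratio computation via the vanishing coordinate sums, the compatibility check for $T$, and the description of $\mathcal J$) are exactly the verifications the paper leaves to the reader.
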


In what follows we present the principal realization of
$\widehat{\mathfrak{gl}}_N(\mathbb C_{q^N})$, where
$q^N=(q_1^N,\cdots,q_l^N)$. Set
$E=E_{12}+\cdots +E_{N-1,N}+E_{N,1}\text{ and } F=\sum_{i=1}^N\omega^iE_{ii},$
where $\omega$ is a primitive $N$-th  root of unity. It was proved in \cite{G2} that the
subalgebra of $\widehat{\mathfrak{gl}}_N(\mathbb C_q)$ spanned by
the elements
$F^iE^{n_0}(t_0^{n_0}t^\mathbf n), \mathbf c,\ i,n_0\in \mathbb Z, \mathbf n\in \mathbb Z^l, $
is isomorphic to the Lie algebra $\widehat{\mathfrak{gl}}_N(\mathbb
C_{q^N})$. For $i,j,n_0,r_0\in \mathbb Z, \mathbf n,\mathbf r\in \mathbb Z^l$, one has that
\begin{equation}\label{eq:7.2} \begin{split}
&[F^iE^{n_0}(t_0^{n_0}t^\mathbf n),F^jE^{r_0}(t_0^{r_0}t^\mathbf r)]
=\omega^{jn_0}q^{r_0\mathbf n}F^{i+j}E^{n_0+r_0}(t_0^{n_0+r_0}t^{\mathbf n+\mathbf r})\\
&-\omega^{ir_0}q^{n_0\mathbf
r}F^{i+j}E^{n_0+r_0}(t_0^{n_0+r_0}t^{\mathbf n+\mathbf r})
+n_0q^{r_0\mathbf
n}\omega^{jn_0}\delta_{\overline{i+j},N}\delta_{n_0+r_0,0}\delta_{\mathbf
n+\mathbf r,0}\mathbf c,
\end{split}
\end{equation}
where $\bar{i}$ is the unique integer in $\{1,\cdots,N\}$ such that
$\bar{i}\equiv i(\mathrm{mod}\ N)$.

Choose $Q=Q(A_{N-1}), N\geq 2, \nu=\nu_c,m=N$ and $\Gamma=\Gamma_q$,
where $\nu_c$ is the isometry of $P$ defined by
$\nu_c(\epsilon_i)=\epsilon_{\sigma(i)},\ 1\le i\le N,\sigma=(12\cdots N).$
Thus $\nu_c$ is the Coxeter isometry of $Q(A_{N-1})$ and
satisfies the following conditions
\begin{align*}
&\sum_{p\in \mathbb Z_m}\nu_c^p\alpha=0,\ \forall \alpha\in Q,\ \sum_{p\in \mathbb Z_m}\la p\nu_c^p\alpha,\beta\ra\in m\mathbb Z,\
\forall \alpha,\beta\in Q.
\end{align*}
This implies $C(\alpha,\beta)=1$ for all $\alpha,\beta\in Q$. Thus
we may take $\varepsilon_C=1$ and $\eta(p,\alpha)=1$ for all $p\in
\mathbb Z_m,\alpha\in Q$. Then the trivial
$\mathbb C[Q,\varepsilon_C]$-module $\mathbb C$ satisfies the
condition (\ref{eq:2.2}) and hence we may take the generalized Fock
space to be $S$.

It follows from
(\ref{eq:4.7}) and Proposition
\ref{prop:cr} that, for $i,j\in \mathbb Z$ and $\mathbf n,\mathbf r\in \mathbb Z^l$,
\begin{equation}\begin{split}\label{eq:6.3}
&[G^{i}(\mathbf n,z_1),G^{j}(\mathbf r,z_2)]
=G^{i+j}(\mathbf n+\mathbf r,\omega^{-j} z_1)\delta(\omega^jz_2/q^\mathbf nz_1)\\
&-G^{i+j}(\mathbf n+\mathbf r,\omega^{-i}
z_2)\delta(\omega^iz_1/q^\mathbf
rz_2)+\delta_{\overline{i+j},N}(D\delta)(\omega^jz_2/q^\mathbf nz_1)
\end{split}\end{equation}
where
$G^{i}(\mathbf n,z):=N\zeta(\epsilon_{\overline{N-i+1}}-\epsilon_{1})^{-1}(1-\omega^{-i})^{\delta_{\bar{i},N}-1}
G_{\overline{N-i+1},1}(q^{\mathbf{n}},z).$
Let $G^{i}(\mathbf n,z)=\sum_{n_0\in \mathbb Z}g^i(\mathbf n,n_0)z^{-n_0}$, then
by comparing the identity (\ref{eq:6.3}) with (\ref{eq:7.2}), we obtain the following isomorphism between
the Lie algebra $\widehat{\mathfrak{gl}}_N(\mathbb C_{q^N})$ and $\widehat{\mathcal G}(Q(A_{N-1}),\nu_c,N,\Gamma_q)$
\begin{align*}
F^iE^{n_0}t_0^{n_0}t^\mathbf n\mapsto g^i(\mathbf n,n_0),\ \mathbf
c\mapsto \mathbf c,\ i, n_0\in \mathbb Z, \mathbf n\in \mathbb Z^l.
\end{align*}

Set $y^i(\mathbf n,n_0):=
N\zeta(\epsilon_{\overline{N-i+1}}-\epsilon_{1})^{-1}(1-\omega^{-i})^{\delta_{\bar{i},N}-1}
y_{\overline{N-i+1},1}(q^{\mathbf{n}},n_0)$ for $i, n_0\in \mathbb
Z$ and $\mathbf n\in \mathbb Z^l$. Thus from Theorem \ref{thm:4.1}, we obtain the following result which was
given in \cite{BS} for the case $N=2$ and in \cite{G2,BGT} for all $N\ge 2$.
\begin{cor} There is an irreducible representation of the Lie
algebra $\widehat{\mathfrak{gl}}_N(\mathbb C_{q^N})$ on $V_T=S$ in
the principal picture. The representation is given by the mapping
\begin{align*}
F^iE^{n_0}t_0^{n_0}t^\mathbf n\mapsto y^i(\mathbf n,n_0),\ \mathbf
c\mapsto 1,\ i, n_0\in \mathbb Z,\mathbf n\in \mathbb Z^l. \qed
\end{align*}
\end{cor}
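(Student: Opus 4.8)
The plan is to read this corollary off from Theorem \ref{thm:4.1}, applied to the quadruple $(Q(A_{N-1}),\nu_c,N,\Gamma_q)$, combined with the Lie algebra isomorphism $\widehat{\mathfrak{gl}}_N(\mathbb C_{q^N})\cong\widehat{\mathcal G}(Q(A_{N-1}),\nu_c,N,\Gamma_q)$ already established above, which sends $F^iE^{n_0}t_0^{n_0}t^{\mathbf n}\mapsto g^i(\mathbf n,n_0)$. First I would recall the special features of this choice of data that were recorded prior to the statement: since $\nu_c$ is the Coxeter isometry one has $C(\alpha,\beta)=1$ for all $\alpha,\beta\in Q(A_{N-1})$, so one may take $\varepsilon_C=1$ and $\eta(p,\alpha)=1$, the twisted group algebra $\mathbb C[Q,\varepsilon_C]$ is just the ordinary group algebra, and the one–dimensional trivial module $T=\mathbb C$ satisfies the compatibility condition \eqref{eq:2.2}; hence the generalized Fock space may legitimately be taken to be $V_T=S$.

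With this in place, Theorem \ref{thm:4.1} provides a representation of $\widehat{\mathcal G}(Q(A_{N-1}),\nu_c,N,\Gamma_q)$ on $S$ under which $\widetilde{e_{\rho_ii,\rho_jj}}(c,n)\mapsto y_{\rho_ii,\rho_jj}(c,n)$ and $\mathbf c\mapsto 1$; equivalently the generating series $G_{\rho_ii,\rho_jj}(c,z)$ acts as $Y_{\rho_ii,\rho_jj}(c,z)$. I would then transport this representation along the isomorphism above. Since $g^i(\mathbf n,n_0)$ is by definition the $(-n_0)$–component of $N\zeta(\epsilon_{\overline{N-i+1}}-\epsilon_1)^{-1}(1-\omega^{-i})^{\delta_{\bar i,N}-1}\,G_{\overline{N-i+1},1}(q^{\mathbf n},z)$, and $y^i(\mathbf n,n_0)$ is built from $y_{\overline{N-i+1},1}(q^{\mathbf n},n_0)$ by exactly the same normalization, it follows that $F^iE^{n_0}t_0^{n_0}t^{\mathbf n}$ acts on $S$ precisely as $y^i(\mathbf n,n_0)$, which is the asserted mapping. (The commutator identity \eqref{eq:6.3}, obtained just before the statement, is really just the shadow on vertex operators of the defining relation \eqref{eq:7.2}, so no further bracket computation is needed here.)

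It then remains to verify irreducibility. As $\Gamma_q\neq\{1\}$, the hypothesis of Theorem \ref{thm:4.1} to be checked is $\mathrm{span}_{\mathbb{Z}}Q''=Q$ with $Q''=\{\rho_i\epsilon_i-\rho_j\epsilon_j:1\le i,j\le N,\ \rho_i,\rho_j=\pm1\}\cap Q(A_{N-1})$. This holds because $\epsilon_i-\epsilon_j\in Q(A_{N-1})$ for all $i\neq j$, so $Q''$ already contains the root basis $\epsilon_1-\epsilon_2,\dots,\epsilon_{N-1}-\epsilon_N$ of $Q(A_{N-1})$. Since $T=\mathbb C$ is visibly irreducible over $\mathbb C[Q,\varepsilon_C]=\mathbb C$, Theorem \ref{thm:4.1} gives that $V_T=S$ is irreducible, completing the argument. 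I do not expect any genuine obstacle in this corollary: the substantive work — the commutator computation for the twisted $\Gamma$-vertex operators and the passage to \eqref{eq:6.3} — has already been done, and the only points that need care are confirming that $T=\mathbb C$ meets \eqref{eq:2.2} and that the spanning condition on $Q''$ is satisfied.
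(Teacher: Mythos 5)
Your proposal is correct and follows essentially the same route as the paper: the corollary is obtained by transporting the representation of Theorem \ref{thm:4.1} for the quadruple $(Q(A_{N-1}),\nu_c,N,\Gamma_q)$ along the isomorphism $F^iE^{n_0}t_0^{n_0}t^{\mathbf n}\mapsto g^i(\mathbf n,n_0)$ established just before the statement, with $\varepsilon_C=1$, $T=\mathbb C$ and $V_T=S$. Your explicit verification of the irreducibility hypothesis ($\mathrm{span}_{\mathbb Z}Q''=Q(A_{N-1})$ via the simple roots $\epsilon_i-\epsilon_{i+1}$) is the only detail the paper leaves implicit, and it is checked correctly.
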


\subsection{Realization of trigonometric Lie algebras}
 Let $N=1,Q=\{0\},\nu=\mathrm{Id}$ or $-\mathrm{Id},m=1$
or $2$ and $\Gamma=\Gamma_{\mathbf h}$ (cf.Section 2.2). Note that in this case the
generalized Fock space is  $V_T=S(\mathcal H(\nu)^-)$. Set $g(\mathbf
n,n_0)=e^{-n_0\sqrt{-1}(\mathbf h,\mathbf
n)}\widetilde{e_{1,1}}(e^{-2\sqrt{-1}(\mathbf h,\mathbf n)},n_0)$
for $(n_0,\mathbf n)\in \mathbb Z^{l+1}$. From the third example in
Section 2.2 and Proposition \ref{prop:cr}, one can check that the
trigonometric Lie algebra of series $\widehat{A}_\mathbf h$ (resp. $\widehat{B}_\mathbf h$) is
isomorphic to $\widehat{\mathcal
G}(\{0\},\mathrm{Id},1,\Gamma_\mathbf h)$ (resp. $\widehat{\mathcal
G}(\{0\},-\mathrm{Id},2,\Gamma_\mathbf h)$) via the isomorphism
$$A_{\mathbf n,n_0}\ (\text{resp}.\ 2B_{\mathbf n,n_0}\ ) \mapsto g(\mathbf n,n_0),\ \mathbf c\mapsto \mathbf c,\ (n_0,\mathbf n)\in \mathbb Z^{l+1}.$$

Set $y(\mathbf n,n_0)=e^{-n_0\sqrt{-1}(\mathbf h,\mathbf
n)}y_{1,1}(e^{-2\sqrt{-1}(\mathbf h,\mathbf n)},n_0)$ for
$(n_0,\mathbf n)\in \mathbb Z^{l+1}$. From the isomorphisms given
above and Theorem \ref{thm:4.1}, we have the
following result, which was given in \cite{G-KL1,G-KL2}.
\begin{cor} The generalized Fock spaces $S(\mathcal
H(\mathrm{Id})^-)$ and $S(\mathcal H(-\mathrm{Id})^-)$ afford irreducible representations for the trigonometric
Lie algebras of
series $\widehat{A}_\mathbf h$ and $\widehat{B}_\mathbf h$ respectively.
The representations are respectively given by the mapping
\begin{align*}
A_{\mathbf n,n_0}\mapsto y(\mathbf n,n_0),\ \mathbf c\mapsto 1,
\quad \text{and}\quad
B_{\mathbf n,n_0}\mapsto 2y(\mathbf n,n_0),\ \mathbf c\mapsto 1, \ (n_0,\mathbf n)\in \mathbb Z^{l+1}. \qed
\end{align*}
\end{cor}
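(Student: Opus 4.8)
The plan is to deduce this corollary as a direct specialization of Theorem~\ref{thm:4.1}, transported along the two isomorphisms of Lie algebras recorded just above. Since $Q=\{0\}$ and $N=1$ in both cases, the twisted group algebra $\mathbb C[Q,\varepsilon_C]$ reduces to $\mathbb C$, one may take $T=\mathbb C$, and the generalized Fock space is $V_T=S(\mathcal H(\nu)^-)$; this is $S(\mathcal H(\mathrm{Id})^-)$ for the quadruple $(\{0\},\mathrm{Id},1,\Gamma_{\mathbf h})$ and $S(\mathcal H(-\mathrm{Id})^-)$ for $(\{0\},-\mathrm{Id},2,\Gamma_{\mathbf h})$. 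By Theorem~\ref{thm:4.1}, $V_T$ carries a representation of $\widehat{\mathcal G}(\{0\},\nu,m,\Gamma_{\mathbf h})$ under which $\widetilde{e_{1,1}}(c,n)\mapsto y_{1,1}(c,n)$ and $\mathbf c\mapsto 1$.

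First I would pull this representation back along the isomorphism $\widehat{A}_{\mathbf h}\cong\widehat{\mathcal G}(\{0\},\mathrm{Id},1,\Gamma_{\mathbf h})$ (resp. $\widehat{B}_{\mathbf h}\cong\widehat{\mathcal G}(\{0\},-\mathrm{Id},2,\Gamma_{\mathbf h})$) displayed above. Under this isomorphism $A_{\mathbf n,n_0}$ corresponds to $g(\mathbf n,n_0)=e^{-n_0\sqrt{-1}(\mathbf h,\mathbf n)}\widetilde{e_{1,1}}(e^{-2\sqrt{-1}(\mathbf h,\mathbf n)},n_0)$, so by the representation of Theorem~\ref{thm:4.1} it acts on $V_T$ as $e^{-n_0\sqrt{-1}(\mathbf h,\mathbf n)}y_{1,1}(e^{-2\sqrt{-1}(\mathbf h,\mathbf n)},n_0)=y(\mathbf n,n_0)$; the analogous computation for $\widehat{B}_{\mathbf h}$, carrying along the factor $2$ present in the normalization of $B_{\mathbf n,n_0}$, yields $B_{\mathbf n,n_0}\mapsto 2y(\mathbf n,n_0)$. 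The only thing requiring care here is the bookkeeping of these scalar factors, which is immediate from the definitions of $g$ and $y$ and from the constants $\zeta$ and $\kappa$ entering $Y_{1,1}(c,z)$.

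Finally, for irreducibility I would invoke the second assertion of Theorem~\ref{thm:4.1}. Since $\Gamma_{\mathbf h}\neq\{1\}$ and $Q=\{0\}$, the set $Q''=\{\rho_1\epsilon_1-\rho_1'\epsilon_1:\rho_1,\rho_1'=\pm1\}\cap Q=\{0\}$, so $\mathrm{span}_{\mathbb Z}Q''=Q$ and the hypothesis of the theorem is met. As $T=\mathbb C$ is trivially an irreducible $\mathbb C[Q,\varepsilon_C]=\mathbb C$-module, Theorem~\ref{thm:4.1} gives that $V_T$ is irreducible over $\widehat{\mathcal G}(\{0\},\nu,m,\Gamma_{\mathbf h})$, hence over $\widehat{A}_{\mathbf h}$ (resp. $\widehat{B}_{\mathbf h}$), the two actions differing only by a Lie algebra isomorphism. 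I do not expect a genuine obstacle; the single delicate point is verifying that the chain of normalizations through $g$, the isomorphism, and the definition of $y$ produces exactly the asserted scalars, in particular the factor $2$ in the $\widehat{B}_{\mathbf h}$ case.
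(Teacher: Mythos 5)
Your proposal is correct and follows essentially the same route as the paper: the paper likewise obtains the corollary by specializing Theorem~\ref{thm:4.1} to the quadruples $(\{0\},\mathrm{Id},1,\Gamma_{\mathbf h})$ and $(\{0\},-\mathrm{Id},2,\Gamma_{\mathbf h})$ and transporting the resulting representation (and the irreducibility statement, with $T=\mathbb C$) along the displayed isomorphisms with $\widehat{A}_{\mathbf h}$ and $\widehat{B}_{\mathbf h}$. Your bookkeeping of the factor $2$ coming from the normalization of $B_{\mathbf n,n_0}$ matches the corollary's statement.
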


\subsection{Realization of unitary Lie algebras}
Choose $Q=Q(A_{N-1}),\nu=-\mathrm{Id},m=2$. Then
$C(\alpha,\beta)=(-1)^{\la \alpha,\beta \ra}$ for $\alpha,\beta\in
Q$, and we may take $\varepsilon_C=\varepsilon^*$, and
$\eta(p,\alpha)=1$ for all $\alpha\in Q,p\in \mathbb Z_m$ as $\nu$
preserves $\varepsilon^*$. Note that $\mathcal H_{(0)}=0$ in this
case and hence the condition (\ref{eq:2.2}) is equivalent to that of
$e_{2\alpha}$ acting as the identity operator on $T$ for any
$\alpha\in Q$. Let $\mathbb C[P/2P]=\oplus_{\alpha\in P}\mathbb
Ce_{\bar{\alpha}}, \bar{\alpha}= \alpha+2P$ be the group algebra
over the quotient group $P/2P$. Define a $\mathbb
C[Q,\varepsilon^*]$-module structure on $\mathbb C[P/2P]$ by
 \begin{align*}
 e_\alpha.e_{\bar{\beta}}=\varepsilon^*(\alpha,\beta)e_{\overline{\alpha+\beta}},\ \alpha\in Q,\beta\in P.
 \end{align*}
 Obviously, $e_{2\alpha}$ acts as the identity operator on $\mathbb C[P/2P]$.
 Therefore, we take $V_T=\mathbb C[P/2P]\otimes S$.
 One can check that the Lie algebra $\widehat{\mathcal G}(Q(A_{N-1}),-\mathrm{Id},2,\Gamma)$ is
isomorphic to the unitary Lie algebra
$\widehat{\mathfrak{u}}_N(\mathbb C_\Gamma)$ via the isomorphism
 \begin{align*}
2\varepsilon^*(\epsilon_i,\epsilon_j)\widetilde{e_{i,j}}(c,n)\mapsto
u_{i,j}(c,n), \mathbf c\mapsto \mathbf c,\ 1\le i,j\le N, c\in \Gamma, n\in \mathbb Z.
\end{align*}
Comparing this with Theorem \ref{thm:4.1}, we have the following result obtained in \cite{CGJT}.

\begin{cor} The  generalized Fock space $V_T=\mathbb
C[P/2P]\otimes S$ affords a representation for the unitary Lie algebra
$\widehat{\mathfrak{u}}_N(\mathbb C_\Gamma)$ with the actions given
by
\begin{align*}
u_{i,j}(c,n)\mapsto
2\varepsilon^*(\epsilon_i,\epsilon_j)y_{i,j}(c,n),\ \mathbf c\mapsto
1,\ 1\le i,j\le N, n\in \mathbb Z, c\in \Gamma. \qed
\end{align*}
\end{cor}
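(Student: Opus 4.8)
The plan is to obtain $\widehat{\mathfrak u}_N(\mathbb C_\Gamma)$ as the twisted $\Gamma$-Lie algebra $\widehat{\mathcal G}(Q(A_{N-1}),-\mathrm{Id},2,\Gamma)$ attached to the quadruple of the corollary, to recognize $\mathbb C[P/2P]\otimes S$ as an admissible generalized Fock space for it, and then to transport the representation furnished by the first part of Theorem \ref{thm:4.1} along an explicit isomorphism. First I would record the structural data. Since $Q(A_{N-1})$ is even, $-\mathrm{Id}$ is an order-$2$ isometry preserving it, and $\la(-\mathrm{Id})\alpha,\alpha\ra=-\la\alpha,\alpha\ra\in 2\mathbb Z$, the assumptions (A2)--(A5) hold with $m=2$, while (A1) holds by hypothesis on $\Gamma$. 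Taking $\omega=-1$ one computes directly that $C(\alpha,\beta)=(-1)^{\la\alpha,\beta\ra}$; as the index range $-m/2<p<0$ is empty for $m=2$ we have $\varepsilon'\equiv 1$, so $\varepsilon=\varepsilon_C$ may be taken to be $\varepsilon^{*}$, which is a normalized $2$-cocycle for $(-1)^{\la\cdot,\cdot\ra}$ by \eqref{eq:c1}. Because $-\mathrm{Id}$ preserves $\varepsilon^{*}$, one may choose the lift $\hat\nu(e_\alpha)=e_{-\alpha}$, giving $\eta(p,\alpha)=1$ for all $p,\alpha$.

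Next I would check that $V_T=\mathbb C[P/2P]\otimes S$ meets the hypotheses of Section \ref{sec 2}. The crucial simplification is $\mathcal H_{(0)}=0$: for $\nu=-\mathrm{Id}$ one has $h_{(0)}=h+\nu h=0$, so the $\mathcal H_{(0)}$-grading on $T$ is trivial and the first condition of \eqref{eq:2.2} is vacuous. Since moreover $\sum\nu^p\alpha=0$, the second condition of \eqref{eq:2.2} reduces to requiring that $e_{2\alpha}$ act as the identity on $T$ for all $\alpha\in Q$. On $T=\mathbb C[P/2P]$ with the action $e_\alpha.e_{\bar\beta}=\varepsilon^{*}(\alpha,\beta)e_{\overline{\alpha+\beta}}$ this is immediate, as $\varepsilon^{*}(2\alpha,\beta)=\varepsilon^{*}(\alpha,\beta)^2=1$ and $\overline{2\alpha+\beta}=\bar\beta$; the cocycle identity for $\varepsilon^{*}$ shows this is a genuine $\mathbb C[Q,\varepsilon^{*}]$-module structure, well defined on $P/2P$. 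Thus $T$ is admissible.

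The main step is to produce the isomorphism $\Phi\colon\widehat{\mathcal G}(Q(A_{N-1}),-\mathrm{Id},2,\Gamma)\xrightarrow{\sim}\widehat{\mathfrak u}_N(\mathbb C_\Gamma)$ determined by $2\varepsilon^{*}(\epsilon_i,\epsilon_j)\widetilde{e_{i,j}}(c,n)\mapsto u_{i,j}(c,n)$ and $\mathbf c\mapsto\mathbf c$. I would do this by matching the underlying finite-dimensional data. For $Q=Q(A_{N-1})$ the condition $\rho_i\epsilon_i-\rho_j\epsilon_j\in Q(A_{N-1})$ forces $\rho_i=\rho_j$, so $\mathcal J$ consists exactly of the pairs $(\rho i,\rho j)$ with a common sign $\rho=\pm1$; hence $\mathcal G(Q(A_{N-1}))$ is $2N^2$-dimensional and splits into a $\rho=+1$ and a $\rho=-1$ block, with $\bar\nu$ (which sends $e_{\rho i,\rho j}\mapsto e_{-\rho i,-\rho j}$, as $\eta=1$) interchanging them. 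I would verify that $e_{i,j}\mapsto\varepsilon^{*}(\epsilon_i,\epsilon_j)(E_{i,j},0)$ on the first block, together with the values on the second block forced by $\tau$- and $\bar\nu$-equivariance, defines an isomorphism of involutive associative algebras with form $(\mathcal G(Q(A_{N-1})),\tau,\la,\ra_{\mathcal G})\cong(\mathcal M_N\oplus\mathcal M_N^{\mathrm{op}},\mathrm{ex},\mathrm{tr})$ carrying $\bar\nu$ to $\theta\colon(A,B)\mapsto(B^t,A^t)$; here the defining properties \eqref{eq:c1} of $\varepsilon=\varepsilon^{*}$ reduce the associativity, involution, and invariance identities to the bimultiplicativity and the sign rule $\varepsilon^{*}(\epsilon_i,\epsilon_j)\varepsilon^{*}(\epsilon_j,\epsilon_i)=(-1)^{1-\delta_{ij}}$. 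Since the assignment $(\mathcal A,\tau,\theta,\la,\ra_{\mathcal A})\mapsto\widehat{\mathcal A}_\tau(\theta,2,\Gamma)$ is functorial in this data, such an isomorphism induces $\Phi$, and the identification of $\widehat{\mathcal A}_\tau(\theta,2,\Gamma)$ with $\widehat{\mathfrak u}_N(\mathbb C_\Gamma)$ from Example 4 of Section 2.2 (in which $u_{i,j}(c,n)=2\widetilde{(E_{i,j},0)}(c,n)$) accounts for the factor $2$ and the cocycle sign in $\Phi$. Alternatively, $\Phi$ can be verified directly by comparing the brackets of Proposition \ref{prop:cr} with the defining relations of $\widehat{\mathfrak u}_N(\mathbb C_\Gamma)$.

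Finally I would transport the representation. By the first part of Theorem \ref{thm:4.1}, $V_T=\mathbb C[P/2P]\otimes S$ carries a representation of $\widehat{\mathcal G}(Q(A_{N-1}),-\mathrm{Id},2,\Gamma)$ via $\widetilde{e_{i,j}}(c,n)\mapsto y_{i,j}(c,n)$, $\mathbf c\mapsto 1$. Precomposing with $\Phi^{-1}$ and using $u_{i,j}(c,n)=2\varepsilon^{*}(\epsilon_i,\epsilon_j)\widetilde{e_{i,j}}(c,n)$ yields the action $u_{i,j}(c,n)\mapsto 2\varepsilon^{*}(\epsilon_i,\epsilon_j)y_{i,j}(c,n)$, $\mathbf c\mapsto 1$, which is exactly the asserted representation. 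The only genuinely nontrivial point is the algebra matching of the previous paragraph; everything else is bookkeeping with the cocycle $\varepsilon^{*}$ and with the vanishing of $\mathcal H_{(0)}$, and no irreducibility statement is needed for the corollary as worded.
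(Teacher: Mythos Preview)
Your proposal is correct and follows essentially the same route as the paper: choose the quadruple $(Q(A_{N-1}),-\mathrm{Id},2,\Gamma)$, observe $C(\alpha,\beta)=(-1)^{\la\alpha,\beta\ra}$ so that $\varepsilon_C=\varepsilon^*$ with $\eta\equiv 1$, use $\mathcal H_{(0)}=0$ to reduce \eqref{eq:2.2} to the condition that $e_{2\alpha}$ act trivially on $T=\mathbb C[P/2P]$, identify $\widehat{\mathcal G}(Q(A_{N-1}),-\mathrm{Id},2,\Gamma)\cong\widehat{\mathfrak u}_N(\mathbb C_\Gamma)$ via $2\varepsilon^*(\epsilon_i,\epsilon_j)\widetilde{e_{i,j}}(c,n)\mapsto u_{i,j}(c,n)$, and apply Theorem \ref{thm:4.1}. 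Your argument is in fact more explicit than the paper's, which simply asserts the isomorphism; your verification via the map $e_{i,j}\mapsto\varepsilon^*(\epsilon_i,\epsilon_j)(E_{i,j},0)$ of involutive associative algebras together with Example~4 of Section~2.2 is a clean way to justify it.
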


\subsection{Realization of the $BC_N$-graded Lie algebra
$\widehat{\mathfrak{o}}_{2N}(\mathbb C_\Gamma)$} Choose $Q=Q(D_N),$ $
\nu=\mathrm{Id}$ and $m=1$. By a similar argument as we did in Section 6.2, we
may take
$\varepsilon_C=\varepsilon^*$ and the $\mathbb
C[Q,\varepsilon^*]$-module $T$ to be $\mathbb C[Q(D_N)]$ with action defined in
(\ref{eq:2.12}).  Note that the Lie algebra $\widehat{\mathcal
G}(Q(D_N),\mathrm{Id},1,\Gamma)$ is isomorphic to the Lie algebra
$\widehat{\mathfrak{o}}_{2N}(\mathbb C_\Gamma)$ with the mapping
given by
$$\varepsilon^*(\epsilon_i,\epsilon_j)\widetilde{e_{\rho_ii,\rho_jj}}(c,n)\mapsto
f_{\rho_ii,\rho_jj}(c,n),\ \mathbf c\mapsto \mathbf c,\
1\le i,j\le N, \rho_i,\rho_j=\pm 1, n\in \mathbb Z,c\in
\Gamma.
$$
Thus, by Theorem
\ref{thm:4.1}, we have the following result, which was given in
\cite{CT}.
\begin{cor} There is an irreducible $\widehat{\mathfrak{o}}_{2N}(\mathbb C_\Gamma)$-module structure
on the generalized  Fock space $V_T=\mathbb C[Q(D_N)]\otimes S$ with
action given by
\begin{align*}
f_{\rho_ii,\rho_jj}(c,n)\mapsto
\varepsilon^*(\epsilon_i,\epsilon_j)y_{\rho_ii,\rho_jj}(c,n),\
\mathbf c_\mapsto 1,
\end{align*}
for $1\le i,j\le N, \rho_i,\rho_j=\pm 1, n\in \mathbb Z$ and $c\in
\Gamma$. \qed
\end{cor}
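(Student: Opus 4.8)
The plan is to deduce the corollary from Theorem~\ref{thm:4.1} together with the isomorphism displayed immediately before the statement. First I would record that the quadruple $(Q(D_N),\mathrm{Id},1,\Gamma)$ satisfies (A1)--(A5): $\Gamma$ is generic by hypothesis; the standard form on $Q(D_N)$ is even since $\la\sum a_i\epsilon_i,\sum a_i\epsilon_i\ra=\sum a_i^2\equiv\sum a_i\pmod 2$ and $\sum a_i$ is even on the root lattice; $\nu=\mathrm{Id}$ is trivially an isometry preserving $Q(D_N)$ with $\nu^1=\mathrm{Id}$; and (A5) is vacuous because $m=1$ is odd. Since $\nu=\mathrm{Id}$ and $m=1$, one may take $\hat\nu=\mathrm{Id}$, so $\bar\nu=\mathrm{Id}$ and $\eta\equiv 1$, whence the automorphism $\theta=\bar\nu$ of $(\mathcal G(Q(D_N)),\tau)$ is compatible with $\nu$; moreover the $\mathbb C[Q,\varepsilon^*]$-module $T=\mathbb C[Q(D_N)]$ with the action in \eqref{eq:2.12} satisfies the compatibility condition \eqref{eq:2.2}, as $\mathcal H_{(0)}=\mathcal H$ and $\hat\nu=\mathrm{Id}$. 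Hence Theorem~\ref{thm:4.1} applies and yields a representation of $\widehat{\mathcal G}(Q(D_N),\mathrm{Id},1,\Gamma)$ on $V_T=\mathbb C[Q(D_N)]\otimes S$ via $\widetilde{e_{\rho_ii,\rho_jj}}(c,n)\mapsto y_{\rho_ii,\rho_jj}(c,n)$ and $\mathbf c\mapsto 1$.

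Next I would pull this representation back along the isomorphism $\widehat{\mathcal G}(Q(D_N),\mathrm{Id},1,\Gamma)\xrightarrow{\ \sim\ }\widehat{\mathfrak o}_{2N}(\mathbb C_\Gamma)$ recorded just above, which sends $\varepsilon^*(\epsilon_i,\epsilon_j)\widetilde{e_{\rho_ii,\rho_jj}}(c,n)\mapsto f_{\rho_ii,\rho_jj}(c,n)$ and $\mathbf c\mapsto\mathbf c$. Composing the two maps, $f_{\rho_ii,\rho_jj}(c,n)$ acts on $V_T$ as $\varepsilon^*(\epsilon_i,\epsilon_j)y_{\rho_ii,\rho_jj}(c,n)$ and $\mathbf c$ acts as $1$, which is precisely the asserted $\widehat{\mathfrak o}_{2N}(\mathbb C_\Gamma)$-module structure.

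For irreducibility I would invoke the second assertion of Theorem~\ref{thm:4.1}. Here $Q''=\{\rho_i\epsilon_i-\rho_j\epsilon_j:1\le i,j\le N,\ \rho_i,\rho_j=\pm1\}\cap Q(D_N)$ contains every root $\pm\epsilon_i\pm\epsilon_j$ with $i\ne j$ (writing, e.g., $\epsilon_i+\epsilon_j=\epsilon_i-(-\epsilon_j)$), and these $\mathbb Z$-span $Q(D_N)$ by definition; the same set of roots constitutes $Q'=\{\alpha\in Q(D_N):\la\alpha,\alpha\ra=2\}$, so $\mathrm{span}_\mathbb Z Q''=\mathrm{span}_\mathbb Z Q'=Q(D_N)$ and the relevant span hypothesis of Theorem~\ref{thm:4.1} holds regardless of whether $\Gamma=\{1\}$. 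It then remains to note that $T=\mathbb C[Q(D_N)]$ is irreducible: it is a weight module for $\mathcal H_{(0)}=\mathcal H$ with one-dimensional weight spaces $\mathbb C e^\alpha$, $\alpha\in Q(D_N)$, and $e_\beta\cdot e^\alpha=\varepsilon^*(\beta,\alpha)e^{\alpha+\beta}$ connects any two weight vectors, so $\mathbb C[Q(D_N)]$ has no nonzero proper submodule. By Theorem~\ref{thm:4.1}, $V_T$ is therefore an irreducible $\widehat{\mathfrak o}_{2N}(\mathbb C_\Gamma)$-module. The argument is entirely formal once Theorem~\ref{thm:4.1} and the displayed isomorphism are in hand; the only points requiring a line of justification are the span identities for the $D_N$ root lattice and the irreducibility of the (projective) regular module $\mathbb C[Q(D_N)]$, both of which are elementary, so there is no real obstacle here.
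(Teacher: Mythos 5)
Your proposal is correct and follows essentially the same route as the paper: specialize Theorem \ref{thm:4.1} to the quadruple $(Q(D_N),\mathrm{Id},1,\Gamma)$ with $\varepsilon_C=\varepsilon^*$ and $T=\mathbb C[Q(D_N)]$, transport the action along the displayed isomorphism with $\widehat{\mathfrak o}_{2N}(\mathbb C_\Gamma)$, and get irreducibility from the span condition on the $D_N$ roots together with the irreducibility of $T$. Your write-up is in fact more explicit than the paper's (which leaves the verification of (A1)--(A5), the span hypotheses, and the irreducibility of $T$ implicit), but there is no difference in method.
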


\subsection{Realization of the $BC_{N-1}$-graded Lie algebra $\widehat{\mathfrak o}_{2N}^{(2)}(\mathbb C_\Gamma)$}
In this section we give a homogeneous vertex
operator construction for the $BC_{N-1}$-graded Lie algebras
$\widehat{\mathfrak o}_{2N}^{(2)}(\mathbb C_\Gamma)$
 with grading subalgebra of type $B_{N-1}$ defined in \cite{ABG}.
In what follows we take $Q=Q(D_N), \nu=\nu_d$ and $m=2$, where
$\nu_d$ is the diagram automorphism of $Q(D_N)$. Recall that
$\nu_d(\epsilon_i)=\epsilon_i$ for $i=1,\cdots,N-1$ and
$\nu_d(\epsilon_N)=-\epsilon_N$. Then we have $i_1=i$
for $i=1,\cdots,N-1$ and $N_1=-N$.

Define an involution $^*$ of the Lie algebra $\widehat{\mathfrak
o}_{2N}(\mathbb C_\Gamma)$ (cf. Section 2.2) as follows
 $$f_{\rho_ii,\rho_jj}(c,n)^*= (-1)^nf_{\rho_ii_1,\rho_jj_1}(c,n),\ \mathbf c\mapsto \mathbf c,$$
where $1\le i,j\le N,\rho_i,\rho_j=\pm1, c\in \Gamma$ and $n\in
\mathbb Z$. We denote by $\widehat{\mathfrak o}_{2N}^{(2)}(\mathbb
C_\Gamma)$ the subalgebra consisting of fixed-points of $\widehat{\mathfrak
o}_{2N}(\mathbb C_\Gamma)$ under the involution $^*$.
One can easily check that the Lie algebra $\widehat{\mathfrak
o}_{2N}^{(2)}(\mathbb C_\Gamma)$ is a $BC_{N-1}$-graded Lie algebra
 with grading subalgebra of type $B_{N-1}$ in the sense of \cite{ABG}.
 We remark that if
$\Gamma=\{1\}$, then this fixed-point subalgebra is nothing but the affine Lie algebra $\widehat{\mathfrak o}_{2N}^{(2)}(\mathbb C)$.

Note that in the case $\nu=\nu_d$ and $m=2$, we have
$C(\alpha,\beta)=(-1)^{\la \alpha,\beta\ra}$ for $\alpha,\beta\in
Q(D_N)$. Thus we may choose $\varepsilon_C=\varepsilon^*$ and
$\eta(p,\alpha)=1$ for $p\in \mathbb Z_m$ and $\alpha\in Q(D_N)$.
 Let $\mathbb C[P/2\mathbb
Z\epsilon_N]=\oplus_{\alpha\in P}\mathbb Ce_{\bar{\alpha}}$,
$\bar{\alpha}= \alpha+2\mathbb Z\epsilon_N$ be the group algebra
over the quotient group $P/2\mathbb Z\epsilon_N$. Define a $\mathbb
C[Q(D_{N}),\varepsilon^*]$-module structure and an $\mathcal
H_{(0)}$-action
 on $\mathbb C[P/2\mathbb Z\epsilon_N]$ by
 \begin{align*}
 e_\alpha.e_{\bar{\beta}}&=\varepsilon^*(\alpha,\beta)e_{\overline{\alpha+\beta}},\
 h.e_{\bar{\beta}}=\la h,\beta \ra e_{\bar{\beta}},\ \alpha\in Q(D_N),\beta\in P, h\in \mathcal H_{(0)},
 \end{align*}
which is compatible in the sense of (\ref{eq:2.2}). Therefore, we may
take the generalized Fock space to be $\mathbb C[P/2\mathbb
Z\epsilon_N]\otimes S$. For $j=0,1$, set
\begin{align*} \mathbb C[P/2\mathbb Z\epsilon_N]^j=\{e_{\bar{\alpha}}|\bar{\alpha}=\sum_{i=1}^{N} a_i\bar{\epsilon}_i,a_1,\cdots,a_{N-1}\in \mathbb
Z, a_N=0,1,
\sum_{i=1}^N a_i\in 2\mathbb Z+j\},
\end{align*}
which are
irreducible $\mathbb C[Q(D_N),\varepsilon^*]$-submodules of $\mathbb
C[P/2\mathbb Z\epsilon_N]$.

One can check that
the Lie algebra $\widehat{\mathfrak o}_{2N}^{(2)}(\mathbb C_\Gamma)$
 is isomorphic to $\widehat{\mathcal G}(Q(D_N),\nu_d,2,\Gamma)$
with the isomorphism given by
$$g_{\rho_ii,\rho_jj}(c,n)
\mapsto
\varepsilon^*(\epsilon_i,\epsilon_j)\widetilde{e_{\rho_ii,\rho_jj}}(c,n),\
2\mathbf c\mapsto \mathbf c,\ 1\le i,j\le N,\rho_i,\rho_j=\pm
1, c\in \Gamma,n\in \mathbb Z.$$ This together with Theorem
\ref{thm:4.1} gives us the following result.
\begin{thm} \label{thm:6.5} There is an $\widehat{\mathfrak o}_{2N}^{(2)}(\mathbb C_\Gamma)$-module structure
on $\mathbb C[P/2\mathbb Z\epsilon_N]\otimes S$ by the mapping
\begin{align*}
g_{\rho_ii,\rho_jj}(c,n)\mapsto
\varepsilon^*(\epsilon_i,\epsilon_j)y_{\rho_ii,\rho_jj}(c,n),\
2\mathbf c_\mapsto 1,
\end{align*}
for $1\le i,j\le N, \rho_i,\rho_j=\pm 1, n\in \mathbb Z$ and $c\in
\Gamma$. Moreover, the $\widehat{\mathfrak o}_{2N}^{(2)}(\mathbb
C_\Gamma)$-module $\mathbb C[P/2\mathbb Z\epsilon_N]\otimes S$ is
completely reducible and the irreducible components are $\mathbb
C[P/2\mathbb Z\epsilon_N]^0\otimes S$ and $\mathbb C[P/2\mathbb
Z\epsilon_N]^1\otimes S$. \qed
\end{thm}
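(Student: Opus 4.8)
The plan is to obtain Theorem \ref{thm:6.5} as the specialization of Theorem \ref{thm:4.1} to the quadruple $(Q,\nu,m,\Gamma)=(Q(D_N),\nu_d,2,\Gamma)$, so the first job is to check that this quadruple, together with the data $(\varepsilon_C,\eta,T)$ fixed above, meets all the hypotheses of that theorem, and the second is to transport the conclusion across the isomorphism $\widehat{\mathfrak o}_{2N}^{(2)}(\mathbb C_\Gamma)\cong\widehat{\mathcal G}(Q(D_N),\nu_d,2,\Gamma)$ stated before the theorem. For the hypotheses: $Q(D_N)$ is even, $\nu_d$ is an order-two isometry of $P$ preserving $Q(D_N)$, and for $\alpha=\sum a_i\epsilon_i\in Q(D_N)$ one has $\la\nu_d\alpha,\alpha\ra=\sum a_i^2-2a_N^2\equiv\sum a_i\equiv 0\pmod 2$, so (A2)--(A5) hold; with $\omega=-1$ the factor at $p=1$ in $C(\alpha,\beta)$ is $(-\omega^{-1})^{\la\alpha,\nu_d\beta\ra}=1$, whence $C(\alpha,\beta)=(-1)^{\la\alpha,\beta\ra}$, and since $\nu_d$ preserves the bimultiplicative cocycle $\varepsilon^*$ one may legitimately take $\varepsilon_C=\varepsilon^*$ and $\hat\nu(e_\alpha)=e_{\nu_d(\alpha)}$, hence $\eta(r,\alpha)=1$ and $\bar\nu(e_{\rho_ii,\rho_jj})=e_{\rho_ii_1,\rho_jj_1}$, which is exactly the data entering the construction of $\widehat{\mathcal G}(Q(D_N),\nu_d,2,\Gamma)$. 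Finally, here $\mathcal H_{(0)}=\mathrm{span}_{\mathbb C}\{\epsilon_1,\dots,\epsilon_{N-1}\}$, so the action $h.e_{\bar\beta}=\la h,\beta\ra e_{\bar\beta}$ is well defined on $T=\mathbb C[P/2\mathbb Z\epsilon_N]$, and I would record that the weight decomposition $T=\oplus_{\alpha\in Q}T_{\alpha_{(0)}}$ and the compatibility \eqref{eq:2.2} hold for this $T$ (as asserted in the paragraph preceding the theorem); this is a direct computation using $\eta\equiv 1$.

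Granting this, Theorem \ref{thm:4.1} furnishes the representation of $\widehat{\mathcal G}(Q(D_N),\nu_d,2,\Gamma)$ on $V_T=\mathbb C[P/2\mathbb Z\epsilon_N]\otimes S$ by $\widetilde{e_{\rho_ii,\rho_jj}}(c,n)\mapsto y_{\rho_ii,\rho_jj}(c,n)$, $\mathbf c\mapsto 1$; composing with the isomorphism $g_{\rho_ii,\rho_jj}(c,n)\mapsto\varepsilon^*(\epsilon_i,\epsilon_j)\widetilde{e_{\rho_ii,\rho_jj}}(c,n)$, $2\mathbf c\mapsto\mathbf c$ yields the displayed action of $\widehat{\mathfrak o}_{2N}^{(2)}(\mathbb C_\Gamma)$, which is the first assertion. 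For the decomposition, I would note that each $e_\gamma$ with $\gamma\in Q(D_N)$ has $\sum\gamma_i$ even and therefore preserves the parity of $\sum a_i$, so $\mathbb C[P/2\mathbb Z\epsilon_N]=\mathbb C[P/2\mathbb Z\epsilon_N]^0\oplus\mathbb C[P/2\mathbb Z\epsilon_N]^1$ as $\mathbb C[Q(D_N),\varepsilon^*]$-modules and, since the invertible operators $e_\gamma$ permute the one-dimensional $\mathcal H_{(0)}$-weight lines of each summand transitively, each $\mathbb C[P/2\mathbb Z\epsilon_N]^j$ is an irreducible $\mathbb C[Q(D_N),\varepsilon^*]$-module. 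Because every vertex operator $Y_{\rho_ii,\rho_jj}(c,z)$ shifts the $Q(D_N)$-grading of $T$ by $\rho_i\epsilon_i-\rho_j\epsilon_j\in Q(D_N)$ (the remaining ingredients act trivially on that grading), it preserves each $\mathbb C[P/2\mathbb Z\epsilon_N]^j\otimes S$, so $V_T$ splits $\widehat{\mathcal G}$-invariantly as $(\mathbb C[P/2\mathbb Z\epsilon_N]^0\otimes S)\oplus(\mathbb C[P/2\mathbb Z\epsilon_N]^1\otimes S)$. Since $Q=Q(D_N)$ satisfies $\mathrm{span}_{\mathbb Z}Q''=Q$ (and $\mathrm{span}_{\mathbb Z}Q'=Q$, covering the case $\Gamma=\{1\}$), the irreducibility clause of Theorem \ref{thm:4.1}, applied with $T$ replaced by the irreducible module $\mathbb C[P/2\mathbb Z\epsilon_N]^j$, shows each summand is irreducible, giving the complete reducibility and the identification of the irreducible components.

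The step I expect to be the main obstacle is justifying the isomorphism $\widehat{\mathfrak o}_{2N}^{(2)}(\mathbb C_\Gamma)\cong\widehat{\mathcal G}(Q(D_N),\nu_d,2,\Gamma)$ together with the rescaling $2\mathbf c\mapsto\mathbf c$. The point is to recognize the involution $^*$ of $\widehat{\mathfrak o}_{2N}(\mathbb C_\Gamma)$, under the isomorphism $\widehat{\mathfrak o}_{2N}(\mathbb C_\Gamma)\cong\widehat{\mathcal G}(Q(D_N),\mathrm{Id},1,\Gamma)$ of Section 6.5, as the automorphism induced by $\bar\nu$ twisted by $(-1)^n$ on the $t^n$-component (which is precisely $\theta$ in the construction of $\widehat{\mathcal G}(Q(D_N),\nu_d,2,\Gamma)$, using $\omega=-1$), so that passing to $^*$-fixed points on the one side is the same as passing from the untwisted to the $\nu_d$-twisted $\Gamma$-Lie algebra on the other; the factor $2$ on the central element accounts for the $m^{-1}$ appearing in the $2$-cocycle of Section 2.1 when $m$ changes from $1$ to $2$. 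Establishing this is a careful but routine comparison of the bracket of Proposition \ref{prop:cr} with the defining relations of $\widehat{\mathfrak o}_{2N}(\mathbb C_\Gamma)$ in Section 2.2, using that the index of $i_1$ equals $i$ for $\nu_d$.
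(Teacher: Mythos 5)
Your proposal is correct and follows essentially the same route as the paper: verify that $(Q(D_N),\nu_d,2,\Gamma)$ with $\varepsilon_C=\varepsilon^*$, $\eta\equiv 1$ and $T=\mathbb C[P/2\mathbb Z\epsilon_N]$ satisfies (A1)--(A5) and the compatibility condition \eqref{eq:2.2}, transport the module structure across the isomorphism $\widehat{\mathfrak o}_{2N}^{(2)}(\mathbb C_\Gamma)\cong\widehat{\mathcal G}(Q(D_N),\nu_d,2,\Gamma)$, and obtain complete reducibility from the splitting of $T$ into the two irreducible $\mathbb C[Q(D_N),\varepsilon^*]$-submodules $\mathbb C[P/2\mathbb Z\epsilon_N]^j$ together with the irreducibility clause of Theorem \ref{thm:4.1}. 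You supply rather more detail than the paper (which leaves the isomorphism and the irreducibility of the parity components as "one can check"), and your identification of the involution $^*$ with the twist $\theta=\bar\nu\otimes(-1)^n$ is exactly the right justification for the step the paper omits.
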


\end{document}